\numberwithin{equation}{section}
\DeclareMathAlphabet{\mathcal}{OMS}{cmsy}{m}{n}  
\renewcommand{\paragraph}[1]{\noindent\textbf{#1}}
\newtheorem{theorem}{Theorem}[section]
\newtheorem{lemma}[theorem]{Lemma}
\newtheorem{proposition}[theorem]{Proposition}
\newtheorem{corollary}[theorem]{Corollary}
\theoremstyle{definition}
\newtheorem{definition}[theorem]{Definition}
\newtheorem{example}[theorem]{Example}
\theoremstyle{remark}
\newtheorem{remark}[theorem]{Remark}
\def\N{{\mathbb N}}    
\def\Z{{\mathbb Z}}    
\def\R{{\mathbb R}}    
\def\id{\mathrm{id}}
\newcommand{\e}{\varepsilon} 
\newcommand{\libr}{\llbracket} 
\newcommand{\ribr}{\rrbracket} 
\newcommand{\integint}[1]{{\libr 1,#1\ribr}} 
\newcommand{\one}{\mathds{1}}
\DeclareMathOperator{\Rk}{Rk}
\DeclareMathOperator{\rank}{rank}
\DeclareMathOperator{\rep}{rep}
\DeclareMathOperator{\Rep}{Rep}
\DeclareMathOperator{\Hom}{Hom}
\DeclareMathOperator{\End}{End}
\newcommand{\RelationAsSet}[2][\leq]{\{ {#1}_{#2} \}}
\newcommand{\Erank}{\mathcal{E}_{\Rk}}
\newcommand{\K}{\operatorname{K_0}}
\newcommand{\card}[1]{\##1}
\def\pos{P}
\def\subpos{Q}
\newcommand{\seg}[1]{\langle#1\rangle}
\newcommand{\dhook}[1]{\langle#1\langle}
\newcommand{\uhook}[1]{\rangle#1\rangle}
\DeclareMathOperator{\convol}{\ast}
\def\field{\mathbf{k}}
\def\Vec{\mathrm{Vec}}
\def\vec{\mathrm{vec}}
\def\dgm{\textnormal{Bar}\,}
\def\Mod{M}
\def\Nod{N}
\newcommand{\lkan}{\mathrm{Lan}\,} 
\def\int{{I}}
\def\jnt{{J}}
\def\knt{{K}}
\def\lnt{{L}}
\def\rec{{R}}
\def\sec{{S}}
\newcommand{\mult}{\operatorname{mult}}
\newcommand{\Int}{\ensuremath{\mathcal{I}}\xspace}
\newcommand{\Rec}{\ensuremath{\mathcal{R}}\xspace}
\newcommand{\Sec}{\ensuremath{\mathcal{S}}\xspace}
\newcommand{\Tec}{\ensuremath{\mathcal{T}}\xspace}
\newcommand{\Intt}{\ensuremath{\Int_t}} 
\newcommand{\Intd}{\ensuremath{\Int_d}} 
\newcommand{\grid}{G}
\def\line{\ell}
\def\dist{\mathrm{d}}
\def\disti{\dist_{\mathrm{i}}}
\def\distb{\dist_{\mathrm{b}}}
\def\distm{\dist_{\mathrm{match}}}
\DeclareMathOperator{\dimvect}{\underline{dim}}
\newcommand{\revisedversion}[1]{#1}
\newcommand{\newversion}[1]{#1}
\newcommand{\newnewversion}[1]{#1}
\title{Signed Barcodes for Multi-Parameter Persistence via Rank Decompositions and Rank-Exact Resolutions}
\author{Magnus Bakke Botnan}
\author{Steffen Oppermann}
\author{Steve Oudot}
\renewcommand{\qedhere}{}
\newtheorem{resultx}{Result}
\begin{document}

\begin{abstract}
  In this paper we introduce the signed barcode, a new visual representation of the global structure of the rank invariant of a multi-parameter persistence module or, more generally, of a poset representation. Like its unsigned counterpart in one-parameter persistence, the signed barcode decomposes the rank invariant as a $\Z$-linear combination of rank invariants of indicator modules supported on segments in the poset.
  We  develop the theory behind these decompositions, both for the usual rank invariant and for its generalizations, showing under what conditions they exist and are unique.
  We also show that, like its unsigned counterpart, the  signed barcode reflects in part the algebraic structure of the module: specifically, it derives from the terms in the minimal rank-exact resolution of the module, i.e., its minimal projective resolution  relative to the class of short exact sequences on which the rank invariant is additive. To complete the picture, we show some experimental results that illustrate the contribution of the signed barcode in the exploration of multi-parameter persistence modules. 
\end{abstract}

\maketitle

\section{Introduction}
\label{sec:intro}

  \subsection{Context}
  Given a {\em persistence module}~$\Mod$, i.e., a representation of a poset~$\pos$ in the \newversion{category of} vector spaces, the rank invariant $\Rk \Mod$~\cite{Carlsson2009} encodes the (possibly infinite) ranks of all the internal morphisms of~$\Mod$: 
  \begin{equation}\label{eq:rank_inv}
    \Rk \Mod (s,t)\ :=\ \rank \left[ \Mod(s)\to \Mod(t) \right] \quad \mbox{for every} \ (s,t)\in\RelationAsSet{\pos}.
  \end{equation}
Here, the poset~$\pos$ is fixed and viewed as a category with a single object per element $s\in \pos$ and a single morphism per couple of comparable elements $s\leq t\in\pos$ (we write~$\RelationAsSet{\pos}$ for the set of all these couples), and $\Mod$ is a functor from $\pos$ to the category $\Vec_{\field}$ of vector spaces  over an arbitrary but fixed field~$\field$. We write~$\Rep \pos$ for the category of such functors, and $\rep \pos$ for the full subcategory spanned by the objects~$\Mod$ that are {\em pointwise finite dimensional} (pfd), i.e., that satisfy $\dim \Mod(s) <\infty$ for all $s\in \pos$. \newversion{For $\int$ an interval (i.e., convex\footnote{Convexity means that $u\in \int$ whenever one has $s\leq u\leq t$ with $s,t\in \int$.} and connected\footnote{Connectedness means that, for any $s,t\in\int$, there are $u_0, \ldots, u_{n+1}\in \int$ such that $u_i$ and $u_{i+1}$ are comparable for all $0\leq i\leq n$ and $u_0=s$ and $u_{n+1} = t$. This condition is automatically satisfied when the poset~$\pos$ is totally ordered, for instance when $\pos\subseteq\R$.}) in $P$, we let 
$\field_\int\in \rep\pos$ denote the \emph{interval} persistence module given by
\newnewversion{\[ (\field_\int)(s) = \begin{cases} \field & \text{if $s\in \int$,} \\ 0 & \text{if $s\not\in \int$,}\end{cases}\]}
where $(\field_\int)(s)\to (\field_\int)(t)$ is the identity map for any $s\leq t\in I$.} 

In practical applications of TDA, the poset $\pos$ is usually taken to be $\R^d$, viewed as a product of $d$ copies of the totally ordered real line, or any subposet thereof (notable cases include the integer grid~$\Z^d$ and its  finite subgrids). \newversion{Importantly, for totally ordered sets and pfd representations, the rank invariant describes any module up to isomorphism and is therefore a \emph{complete} invariant. This is a direct consequence of the following fundamental result (Figure~\ref{fig:decomp_1d}).
\begin{theorem}[\cite{botnan2020decomposition,Crawley-Boevey2012}]
 Let $\pos$ be a totally ordered set. Then, any $\Mod\in \rep \pos$ decomposes as
  \begin{equation}\label{eq:decomp_mod_1d}
    \Mod \simeq \bigoplus_{\int\in\dgm\Mod} \field_\int,
  \end{equation}
  for a unique multiset of intervals $\dgm\Mod$ called the \emph{persistence barcode} of $\Mod$. 
 \end{theorem}
Remarkably, what at first glance appears to be a complicated collection of vector spaces and linear maps admits a compact description by \newnewversion{simple subsets of~$\R$}. This result has played a pivotal role in TDA and nearly all of the theory for persistent homology in a single-parameter ($d=1$) has been developed from the barcode.  It is also worth observing   } that it is easy to compute the barcode from the rank invariant when $\pos$ is \emph{locally finite} via the following inclusion-exclusion formula (which assumes for simplicity that $\pos=\Z$, and where $\mult_\int \dgm \Mod$ denotes the multiplicity of $\int$ in the multiset~$\dgm\Mod$):
  \begin{equation}\label{eq:incl_excl_1d}
    \begin{gathered}
      \begin{array}{rcl}
        \mult_{\llbracket s,t\rrbracket}\dgm\Mod &=& \Rk \Mod(s,t) - \Rk \Mod(s-1, t) \\[0.5em]
        && - \Rk \Mod(s, t+1) + \Rk \Mod(s-1, t+1).
        \end{array}
      \end{gathered}
    \end{equation}

  \begin{figure}[tb]
    \centering
    \includegraphics[width=0.6\textwidth]{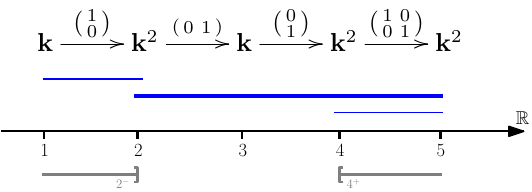}
    \caption{A one-parameter persistence module~$\Mod$ (top) indexed over $\{1, 2, 3, 4, 5\}\subset\R$, and the graphical representation of its barcode (in blue). The corresponding rank decomposition  $\Rk M = \Rk \field_{\llbracket1,2\rrbracket} + \Rk \field_{\llbracket2,5\rrbracket} + \Rk \field_{\llbracket4,5\rrbracket}$ is readily available, and the ranks can easily be read from it: for instance, the rank $\Rk\Mod(2,4)=1$ is given by the one bar (thickened) that connects the down-set $2^-$ to the up-set $4^+$.}
    \label{fig:decomp_1d}
  \end{figure} 

\medskip

The picture in the multi-parameter setting ($d>1$) is much less bright. \newversion{First and foremost, the representation theory of such posets is of ''wild type''. This  essentially means that classifying the space of indecomposable representations is a hopeless task. This poses a serious challenge for the development of an efficient theory of multi-parameter persistence, and for that reason most of the descriptors that have proven useful in applications have been defined using (a part of) the rank invariant; see \cite{Botnan2023} for a recent treatment on multi-parameter persistence. 

Since the rank invariant is \newnewversion{equivalent} to a set of intervals for $d=1$ (the barcode), a natural step forward is to develop a theory for $d>1$ based on (possibly unique) geometric decompositions of the rank invariant. If successful, then much of the theory developed for $d=1$ could potentially generalize in a natural way. However,} the rank invariant can no longer be decomposed as a sum of rank invariants of interval modules, as the following example shows.
\begin{figure}[tb]
  \centering
  \includegraphics[width=\textwidth]{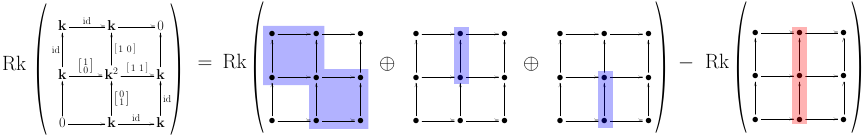}
  \caption{The indecomposable module~$\Mod$ on the left-hand side does not have the same rank invariant as any direct sum of interval modules on the  $3 \times 3$ grid. However, $\Rk\Mod$  is equal to the difference between the rank invariants of two direct sums of interval modules, as shown on the right-hand side. Blue is for intervals counted positively in the decomposition, while red is for intervals counted negatively. }
  \label{fig:decomp_indec2_int_grid}
\end{figure}
\begin{example} \label{ex:no_rank_sum}
Consider the module~$\Mod$ on the left-hand side of Figure~\ref{fig:decomp_indec2_int_grid}, which is a representation of the grid poset $\llbracket 1,3\rrbracket \times \llbracket 1,3\rrbracket\subset\R^2$, and assume there is a decomposition $\Rk\Mod = \sum_{\int\in \Int} \Rk \field_\int$. Since $\Mod((1,2)\to(1,3)) = \id = \Mod((1,3)\to(2,3))$,  the square $\llbracket 1,2\rrbracket \times \llbracket 2,3\rrbracket$ must be contained in one of the intervals, say~$\int$. Similarly, the square $\llbracket 2,3\rrbracket \times \llbracket 1,2\rrbracket$ must be contained in some interval~$\int'$ involved in the decomposition. If $\int$ and $\int'$ are the same interval, then the morphism $(2,1)\to (2,3)$ has non-zero rank in the decomposition, whereas it has zero rank in~$\Mod$, a contradiction. But if $\int$ and $\int'$ are not the same interval, then no other interval may appear in the decomposition because the dimension vector of~$\Mod$ is then saturated by $\int$ and $\int'$, so the  morphism $(1,2)\to (3,2)$ has zero rank in the decomposition, whereas it has non-zero rank in~$\Mod$, again a contradiction. 
\end{example}
Even in cases where $\Rk\Mod$ does decompose as a sum of rank invariants of interval modules, this decomposition may tell little to nothing about the direct-sum decomposition of~$\Mod$. \newversion{That is, the rank invariant is not even complete on the subcategory of {\em interval-decomposable modules}, which by definition only have interval modules as indecomposable direct summands. The following example illustrates this well-known fact.}
\begin{example}\label{ex:decomp_tells_nothing}
  Consider again the module~$\Mod$ on the left-hand side of  Figure~\ref{fig:decomp_indec2_int_grid}, and take its direct sum with the module~$\Nod$ highlighted in red on the right-hand side. Then, as shown in the figure, $\Rk (\Mod\oplus\Nod)$ does admit a decomposition as a sum of rank invariants of interval modules, however these intervals differ significantly from the indecomposable direct summands~$\Mod$ and~$\Nod$: in particular, the first interval in the decomposition creates the illusion that it should be spanned by some feature in~$\Mod\oplus\Nod$, whereas in fact no feature does so, as explained in Example~\ref{ex:no_rank_sum}.
\end{example}

\newversion{To overcome the issues raised in Example~\ref{ex:no_rank_sum} and Example~\ref{ex:decomp_tells_nothing}, we turn to \emph{signed} decompositions of the \emph{generalized} rank invariant.}

\subsection{Signed rank decompositions}
\label{sec:intro_rank-decomp}
\newversion{The {\em generalized rank invariant} is a construction introduced by Kim and M\'emoli~\cite{kim2018generalized} which}  probes the existence of `features' in the module across arbitrary intervals~$\int\subseteq\pos$:
\begin{definition}\label{def:generalized-rank_invariant}
  Let $\Mod\in\Rep\pos$.
  Given an interval $\int\subseteq\pos$, the {\em generalized rank} of $\Mod$ over $\int$, denoted by $\Rk_\int \Mod$, is defined by:
\[ \Rk_\int \Mod = \rank  \left[ \varprojlim \Mod|_\int \to \varinjlim \Mod|_\int \right], \]
where $\varprojlim \Mod|_\int \to \varinjlim
\Mod|_\int$ is the natural morphism from the limit to the co-limit of the diagram~$\Mod|_\int$ and is well-defined because $\int$~is both convex and connected.
Given a collection $\Int$ of intervals, the {\em generalized rank invariant} of $\Mod$ over $\Int$ is the map $\Rk_\Int \Mod \colon \Int \to \N \cup \{ \infty \}$ defined by $\Rk_\Int \Mod(\int) =  \Rk_\int \Mod$.
\end{definition}

\newversion{In the following, we  write~\( \rep_{\Int} \pos \) for  the full subcategory of~$\Rep \pos$ spanned by those objects $M$ such that $\Rk_\Int \Mod$ only takes finite values. Note that $\rep_{\Int}\pos$ contains $\rep \pos$ since the map $\varprojlim \Mod|_\int \to \varinjlim
\Mod|_\int$ factors through the internal spaces of~$\Mod|_\Int$.} 

To see that \newversion{Definition~\ref{def:generalized-rank_invariant}} generalizes the usual rank invariant, take $\int$ to be a {\em (closed) segment} $\seg{s,t} = \{u\in\pos \mid s\leq u\leq t\}$; in this case one has $\Rk_\int \Mod = \rank \left[ \Mod(s)\to\Mod(t) \right]$. 
This generalization was proven by Kim and M\'emoli~\cite{kim2018generalized} to be stronger than the usual rank invariant, and in fact complete on interval-decomposable modules, under some finiteness conditions on the poset~$\pos$---implying in particular that $\Rk_\Int \Mod$ takes only finite values for any pfd module~$\Mod$. The way they arrive at their conclusions is by viewing $\Rk_\Int \Mod$ as a map $\Int\to\Z$, and by taking its M\"obius inverse in the incidence algebra of~$\Int$---itself viewed as a poset equipped with the inclusion order. This inverse, which they call {\em generalized persistence diagram} of~$\Mod$ after Patel~\cite{patel2018generalized} because its expression generalizes the inclusion-exclusion formula~\eqref{eq:incl_excl_1d}, actually encodes the multiplicities of the intervals in the direct-sum decomposition of~$\Mod$ when $\Mod$ is interval-decomposable. When $\Mod$ is not interval-decomposable, M\"obius inversion can still be applied to~$\Rk_\Int \Mod$. \newversion{The resulting multiplicities of intervals can be negative, but the output still encodes the generalized rank invariant of $\Mod$.}

  \newnewversion{We now} put the generalized persistence diagrams in the larger context of {\em signed rank decompositions}.
\begin{definition}\label{def:rank_decomp}
Given a collection $\Int$ of intervals in $\pos$, and a function $r\colon \Int \to \mathbb{Z}$, a {\em (signed) rank decomposition of $r$ over $\Int$} is given by the following identity:
\[
r= \Rk_\Int \field_\Rec - \Rk_\Int \field_\Sec,
\]
where $\Rec$ and $\Sec$ are multi-sets of elements taken from~$\Int$ such that \( \field_{\Rec} \) and \( \field_{\Sec} \) lie in \( \rep_{\Int} \pos \), and where  by definition
\( \field_\Rec = \bigoplus_{\rec\in\Rec} \field_\rec \) and \( \field_\Sec = \bigoplus_{\sec\in\Sec} \field_\sec \)
(note that elements $\rec\in\Rec$ and $\sec\in\Sec$ are considered with multiplicity in the direct sums). By extension, we call the pair $(\Rec, \Sec)$ itself a rank decomposition of~$r$ over~$\Int$. It is a {\em minimal rank decomposition} if $\Rec$ and $\Sec$ are disjoint as multi-sets. 
\end{definition}

Note that rank decompositions in general are not unique, since extra
elements can be added to both $\Rec$ and $\Sec$ with no effect
on~$\Rk_\Int \field_\Rec - \Rk_\Int \field_\Sec$. 
\newversion{\begin{resultx}[Corollary~\ref{cor:min-decomp_exists_unique}]
  The minimal rank decomposition $(\Rec^*, \Sec^*)$ of any map $r:\Int\to\Z$ is unique if it exists. \newnewversion{Furthermore, if a rank decomposition $(\Rec, \Sec)$ of~$r$ exists, then a minimal one exists and is obtained from it by removing common intervals, that is:}
  \[ (\Rec^*, \Sec^*) = (\Rec\setminus\Rec\cap\Sec, \Sec\setminus\Rec\cap \Sec). \]
  \end{resultx}
This result is a consequence of the more fundamental fact that the minimal rank decomposition
is universal among all the rank decompositions of~$r$
(Theorem~\ref{thm:uniqueness}), which itself follows from the fact that the generalized rank invariant, as long as it is finite (i.e., it only takes finite values), is complete on interval-decomposable modules (Proposition~\ref{prop:complete}).} 
For this to hold, Definition~\ref{def:rank_decomp} requires that the intervals involved in the rank decompositions of~$r\colon\Int\to\Z$  be taken from the collection~$\Int$ itself, not from outside: otherwise, as Figures~\ref{fig:decomp_indec2_int_grid} and~\ref{fig:decomp_indec2_grid} together illustrate, the minimal rank decomposition may not be unique.

We also show that, under some finiteness conditions on~$\pos$ and~$r$ that are similar to the ones in~\cite{kim2018generalized}, rank decompositions of~$r$ exist. While this second result can be obtained via an adaptation of the approach from previous work using M\"obius inversion (see Appendix~\ref{sec:localfinite-mobius}), we give an alternative, more  direct proof that is much shorter given our previous results. 
\newversion{\begin{resultx}[Theorem~\ref{th:basis}]
Let \( \Int \) be a locally finite collection of intervals in \( \pos \). Then any function \( r \colon \Int \to \Z \) with upward finite support can uniquely be written as a (possibly infinite, but pointwise finite) $\Z$-linear combination of the functions \( \Rk_\Int \field_{\int} \) with \( \int \in \Int \).
\label{resultB}
\end{resultx}}
These results, detailed in Section~\ref{sec:decomp_exist_unique}, significantly extend previous work on generalized persistence diagrams, providing general conditions under which rank decompositions of $\Z$-valued functions can be considered, with guarantees of existence and uniqueness.

Decompositions of generalized rank invariants of persistence modules in $\rep_{\Int} \pos$
is a special case of this general theory.
In particular, when $\Int$ is the collection of (closed) segments in~$\pos$, $\Rk_\Int$ is just the usual rank invariant~$\Rk$ and our results tell under which conditions it can be decomposed uniquely over the family of rank invariants of {\em segment modules}---i.e., interval modules supported on segments.

We complete our study of rank decompositions of persistence modules with an analysis of their behavior under poset maps, showing that, in essence, rank decompositions commute with restrictions along morphisms of posets (Corollary~\ref{corollary restriction commutes rank dec}). \newversion{This result is important as it allows us to bring our theory to multi-parameter persistence; note that $\R^d$ does not meet the conditions of \newnewversion{Result~\ref{resultB}}. Observing that a left Kan extension of a finite grid in $\R^d$ is a particular type of restriction, we obtain the following result.

\begin{resultx}[Corollary~\ref{cor:decomp_half-open2}]
Let \( M \in \rep \mathbb{R}^d \) be finitely presented. Let \( \Int \) either be the collection of half-open rectangles or the collection of all half-open intervals. Then there are unique disjoint multi-sets \( \Rec \) and \( \Sec \) of elements of \( \Int \), such that
\[ \Rk_{\Int} M = \Rk_{\Int} \field_{\Rec} - \Rk_{\Int} \field_{\Sec}. \]
\end{resultx}

Furthermore, Corollary~\ref{corollary restriction commutes rank dec} allows us to  prove a form of interleaving stability of the rank decompositions under perturbations. 

\begin{resultx}[Corollary~\ref{cor:matching-distance_stability}]
  Let $\Mod, \Mod'$ be pfd persistence modules indexed over $\R^d$. Then, for any rank decompositions $(\Rec, \Sec)$ and $(\Rec', \Sec')$ of $\Rk \Mod$ and $\Rk \Mod'$ respectively over all rectangles in~$\R^d$, we have:
  \[
  \distm(\field_{\Rec} \oplus \field_{\Sec'}, \field_{\Rec'} \oplus \field_{\Sec}) \leq \disti(\Mod, \Mod').
  \]
\end{resultx}}

\begin{figure}[tb]
  \centering
  \includegraphics[width=\textwidth]{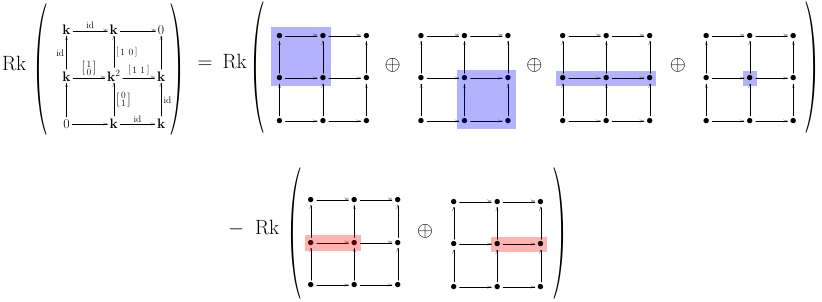}
  \caption{Minimal rank decomposition of the rank invariant of the  module~$\Mod$ from Figure~\ref{fig:decomp_indec2_int_grid} over the collection of segments (rectangles) in the $3\times 3$ grid. Blue is for rectangles in~$\Rec$, while red is for rectangles in~$\Sec$. This decomposition is unique as long as the intervals are constrained to be segments, otherwise Figure~\ref{fig:decomp_indec2_int_grid} would give another valid minimal decomposition.
  }
  \label{fig:decomp_indec2_grid}
\end{figure}

\subsection{Signed barcodes and prominence diagrams}

\begin{figure}[tb]
  \centering
  \includegraphics[scale=1]{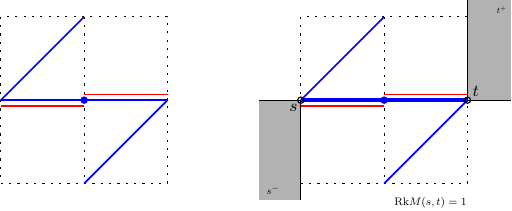}
  \caption{Left: the signed barcode corresponding to the rank decomposition of Figure~\ref{fig:decomp_indec2_grid}. Each bar is the diagonal with positive slope of one of the segments (rectangles) involved in the decomposition, with the same color code (blue for positive sign, red for negative sign). Right: computing $\Rk \Mod(s,t)$ for a pair of indices $s\leq t$, by counting with signed multiplicity the bars that connect the down-set $s^-$ to the up-set $t^+$ (here only the thickened bar does so).}
  \label{fig:barcode_indec2_grid}
\end{figure}

Our second contribution, detailed in Section~\ref{sec:signed_barcodes}, is to introduce an effective graphical representation of minimal rank decompositions as {\em signed barcodes}. \newnewversion{For this} we focus on the usual rank invariant and we consider persistence modules that are indexed over~$\R^d$ or some subposet thereof as in the standard multi-parameter persistence setting. Given the minimal decomposition $(\Rec, \Sec)$ of $\Rk \Mod$, each segment $\seg{s,t}\in\Rec$ is turned into the {\em bar} (line segment) $[s,t]\subset\R^d$ with sign~$\text{+}1$, while each segment $\seg{s',t'}\in\Sec$ is turned into the bar $[s',t']\subset\R^d$ with sign~$\text{-}1$. See Figure~\ref{fig:barcode_indec2_grid}~(left) for an example, and Figure~\ref{fig:barcode_indec2_grid}~(right) for an illustration of how the  rank invariant can be read off from the signed barcode similarly to what is done in the  1-parameter case.

In order to ease the task of discriminating between the signal and the noise in a signed barcode, we also propose a lighter representation as a {\em signed prominence diagram}, in which each signed bar $[s,t]$ is replaced by the signed vector~$t-s\in\R^d$. Guided by our stability results for rank decompositions, we explain how to navigate within this diagram, and in particular, where to expect the signal and the noise to appear (Lemma~\ref{lem:e-smoothed_decomp}).

In Section~\ref{sec:experiments} we illustrate the practical usage of these concepts on simulated data.

\subsection{Rank-exact resolutions}
\label{sec:intro_relative_resolutions}

While rank decompositions tell us everything about the (generalized) rank invariant $\Rk_\Int \Mod$ as a function~$\Int\to\Z$, it is unclear what they tell us about the module~$\Mod$ itself. A notable exception is when $\Mod$ is interval-decomposable, since in that case, the terms in the decomposition coincide with the indecomposable direct summands of~$\Mod$ as first shown by Kim and Memoli~\cite{kim2018generalized}. \newversion{A natural question, therefore, is whether the rank decomposition reflects a deeper algebraic fact}. In Section~\ref{sec:rank-exact}, we answer this question in the affirmative for the usual rank invariant by equipping $\rep \pos$ with an exact structure and studying the associated projective resolutions and Grothendieck group, both of which are classical constructions in homological algebra. 

\newnewversion{
To see the need for such a generalized framework}, recall that if $M$ is finitely presented, then a decomposition of its dimension vector (a.k.a. Hilbert function)~$\dimvect M$ arises as the terms in a finite projective resolution of $M$. Indeed, any such resolution~$\Mod_\bullet\twoheadrightarrow \Mod$ would yield a decomposition  $\dimvect M = \sum_{i\in\N} (-1)^i\, \dimvect M_i$ because it is known that the dimension vector $\dimvect$ is additive on all short exact sequences (by the rank-nullity theorem).  The same would hold for any invariant that is additive on all short exact sequences. Unfortunately, the rank invariant $\Rk M$ is not of this kind, as illustrated in the following example  (where the representations are indexed over the poset $\{1,2\}\subset\R$ and where the morphisms between them are the obvious ones):

\[\xymatrix{
  0 \ar[r] & \left(0\to\field\right) \ar[r] &  \left(\field\stackrel{\id}{\to}\field\right) \ar[r]
  &  \left(\field\to 0\right) \ar[r] & 0
}\]

Thus, it is in general not the case that $\Rk M = \sum_{i\in\N} (-1)^i\, \Rk M_i$ given a finite projective resolution \( M_{\bullet} \twoheadrightarrow M \) of~$M$. In order to decompose the rank invariant using projective resolutions, we are thus forced to only consider short exact sequences on which the rank invariant is additive---we call them short {\em rank-exact} sequences.  
By doing so, we rely on a smaller class of short exact sequences, hence on  a larger corresponding class of projectives\footnote{This is because, by definition, the projectives are those representations~$M$ whose corresponding covariant $\Hom$-functor~$\Hom(M,-)$ is exact on every short exact sequence. Thus, the smaller the class of short exact sequences, the larger the corresponding class of projectives.}, and the following facts turn out to be true:

\begin{itemize}
\item (Thm.~\ref{thm is exact cat}) The category $\rep P$, equipped only with the short rank-exact sequences, admits the structure of an exact category---called the {\em rank-exact category}---in which we can apply standard tools of homological algebra.
\item (Thm.~\ref{thm:enough proj}) When $\pos$ is finite, the corresponding indecomposable projectives (resp. injectives) are the interval representations supported on \emph{lower} (resp. \emph{upper}) \emph{hooks}, and they include the usual indecomposable projectives:
\begin{align*}
\dhook{s, t} & = \{ u \in \pos \mid s \leq u \not\geq t \} && \text{for } s < t \in \pos \cup \{ \infty \} & \mbox{(\em lower hook)},\\
\uhook{s, t} & = \{ \ell \in \pos \mid s \not\geq u \leq t \} && \text{for } s < t \in \pos \cup \{ - \infty \} & \mbox{(\em upper hook)}.
\end{align*}

Moreover, every object in the category admits finite projective and injective resolutions---called {\em rank-exact resolutions}. 
\item \newversion{(Prop.~\ref{prop:finitely-pres_finite-resols}) When \( \pos \) is an upper semi-lattice, then finitely presented representations, together with rank-exact sequences, form an exact category with enough projectives and injectives. The projectives and injectives in this exact category are precisely upper and lower hooks, respectively. Moreover, any object has a finite projective and a finite injective resolution.}
\end{itemize}
\begin{figure}[tb]
  \centering
  \includegraphics[width=\textwidth]{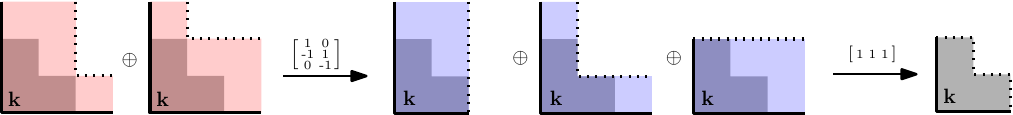}
  \caption{\revisedversion{Minimal rank-exact resolution of a finitely presented interval representation~$M$ of~$\R^2$ (in gray). For clarity, the support of the interval is superimposed with the support each term in the resolution. By construction, the alternating sum of the rank invariants of these terms is equal to~$\Rk(M)$, which can be readily seen on the picture as the ranks sum up to~$1$ within the superimposed areas while the dimensions sum up to~$0$ elsewhere.}}
\label{fig:rank-exact_resol}
\end{figure}
\newversion{For a finitely presented $M\in \rep P$, where $P$ is finite or an upper semi-lattice, we can therefore consider a finite rank-exact resolution \( M_{\bullet} \twoheadrightarrow M \).}  As the rank invariant is additive on all rank short exact sequences, $\Rk M = \sum_{i\in\N} (-1)^i\, \Rk M_i$ as desired.  This proves that the interval representations supported on lower hooks---called {\em lower hook modules} for simplicity---generate all the rank invariants of finitely presented representations. So do  the  {\em upper hook modules}, i.e., the interval representations supported on upper hooks, via injective resolutions. See Figure~\ref{fig:rank-exact_resol} for an example. This new type of rank decomposition defined by the terms in some rank-exact resolution is called a {\em rank-exact decomposition}.

Note that since the rank invariant of each hook is a linear combination of rank invariants of segments, the minimal rank decompositions obtained in Section~\ref{sec:decomp_exist_unique} do, in a certain sense, come from projective resolutions.
This is formalized in the following important theorem,
where $\K(\Erank)$ denotes the Grothendieck group of the exact structure.

\begin{resultx}[Theorem~\ref{thm:basis of K for finite}]
Let \( \pos \) be a finite poset. Then
\[ \Rk \colon \K(\Erank) \to \mathbb{Z}^{\RelationAsSet{\pos}} \]
is an isomorphism.
Moreover, the following three sets are bases for \( \K(\Erank) \):
\begin{align*}
  &\{ [\field_{\seg{i,j}}] \mid i \leq j \in P \},\\[0.5em]
  &\{ [\field_{\dhook{i,j}} ] \mid i < j \in P \cup \{ \infty \} \}, \ \text{and} \\[0.5em]
  &\{ [\field_{\uhook{i,j}} \mid i < j \in P \cup \{ - \infty \} \}.
\end{align*}
\end{resultx}

\subsubsection*{A remark on the two types of minimal decompositions}

By universality of minimal projective/injective resolutions, the rank-exact decomposition given by the terms in the minimal rank-exact resolution satisfies a universality property among all rank-exact decompositions, akin to the universality property of the minimal decomposition as per Definition~\ref{def:rank_decomp}. The two notions of minimal decomposition are different, though, because some terms in even degrees may coincide with terms in odd degrees in the minimal rank-exact resolution and therefore the intersection of the positive and negative parts may be non-empty.

The question arises whether one decomposition is better or more canonical than the other.  One immediate observation is that information is lost when common entries are removed from the positive and negative parts of a rank-exact decomposition. Consequently, while it is straightforward to derive a minimal rank decomposition (using hooks or segments) from a rank-exact decomposition, going in the opposite direction is not possible. Furthermore, in work subsequent to this paper, it has been shown that minimal rank-exact decompositions in $\R^d$ enjoy a  bottleneck stability theorem~\cite[Theorem 6.1]{botnan2022bottleneck}, and that no analogous result can exist for minimal rank decompositions~\cite[Proposition 3.1]{botnan2022bottleneck}. On the other hand, minimal rank decompositions can be effectively computed from the rank invariant using an inclusion-exclusion formula (Eq.~\ref{eq:incl_excl_rect}), while the computational complexity of obtaining a minimal rank-exact decomposition is unknown. Furthermore, rank-exact decompositions are limited to the standard rank invariant; see Section~\ref{sec:genrank-res}.

  \subsection{Related work}
  Our contributions relate to and extend the line of work on generalized persistence diagrams based on M\"obius inversion~\cite{asashiba2019approximation,betthauser2022graded,kim2018generalized,mccleary2021edit,patel2018generalized}.
  M\"obius inversion is but one of many possible invertible operators (including the identity operator) that can be applied to the rank invariant or its generalized version, viewed as functions $\Int\to\Z$, to decompose them over some basis. Prior to this work,  choosing M\"obius inversion over any other invertible operator was mainly motivated by an analogy with the 1-parameter setting, without further mathematical justification akin to the decomposition theorem for 1-parameter persistence modules. Here we provide such a justification, by connecting usual rank decompositions to projective resolutions in the rank-exact structure, the latter being induced precisely by those exact sequences that preserve the rank invariant.
  
Since its first appearance as a preprint, our work has sparked a novel line of research on rank decompositions, signed barcodes, and their connection to relative homological algebra. A new class of  invariants for multiparameter persistence modules has appeared, called the {\em homological invariants}, which include our rank-exact decompositions as a special case.  These invariants are derived from resolutions of the modules relative to some fixed classes of modules. Several such classes of {\em relative projectives} have been studied, notably the interval-decomposable modules and some of their subclasses (including the hook-decomposable modules). Upper bounds on  the corresponding  relative global dimensions have been obtained~\cite{aoki2023summand,asashiba2023relative,asashiba2023approximation,blanchette,botnan2022bottleneck,chacholski2023koszul}. Such bounds play a key part in the stability theory for homological invariants that has started being developed~\cite{botnan2022bottleneck,oudot2021stability}. Lately, computational aspects have also started being investigated~\cite{chacholski2023koszul}. We are only at the beginning of these developments, which we expect to expand in the future due to their potential impact on multi-parameter persistence theory.

\subsection{Outline of the paper}
In Section~\ref{sec:decomp_exist_unique}, we focus on the existence and uniqueness of minimal rank decompositions in varying levels of generality. In Appendix~\ref{sec:localfinite-mobius}, we explain how Möbius inversions can be used to compute minimal rank decompositions, which is then used in Section~\ref{sec:decomp_exist_unique} to derive a formula for computing the multiplicity of an interval in the minimal rank decomposition. \newversion{Additionally, in Section~\ref{sec:ri-poset}, we explore the behavior of rank decompositions under poset homomorphisms, with a particular focus on maps between lattices.}

In Section~\ref{sec:rank-exact}, we give a brief introduction to exact categories and study rank decompositions from the point of view of homological algebra. Here we work exclusively with the usual rank invariant. Importantly, minimal projective and injective resolutions in this category define a new type of rank decompositions called rank-exact decompositions. 

In Section~\ref{sec:rank-decomp_pers}, we reformulate and expand upon our earlier results in the specific context of multi-parameter persistence. We thus obtain unique minimal rank decompositions and rank-exact decompositions for finitely presented persistence modules over~$\R^d$, and of pfd persistence modules over finite grids. In the latter case, we also derive an explicit inclusion-exclusion formula to compute the coefficients in the minimal rank decompositions. The section concludes with a short discussion of stability. 

In Section~\ref{sec:signed_barcodes}, we introduce the signed barcode as a visual representation of the minimal rank decomposition of the usual rank invariant. We explain how the signed barcode reflects the global
structure of the usual rank invariant and how its role in
multi-parameter persistence is similar to the one played by the
unsigned barcode in one-parameter persistence. 

In Section~\ref{sec:experiments}, we carry out a round of experiments whose outcomes illustrate some of the key properties of the minimal rank decompositions and of their associated signed barcodes. 

Finally, in Section~\ref{sec:conclusion}, we conclude the paper with a detailed discussion of some aspects of our work.

\subsection*{Acknowledgements}

We thank the anonymous referees for their thorough reading of the paper and for their valuable feedback.

\section{Rank Decompositions: Existence and  Uniqueness}
\label{sec:decomp_exist_unique}

This section discusses the existence and uniqueness of minimal rank decompositions.
First, we show in Section~\ref{sec:rk-decomp_general} that a minimal rank decomposition is unique, provided it exists. Then, in Section~\ref{sec:rk-decomp_downward-finite} we show, using a short and elementary argument, that every map~$r \colon \Int\to\Z$ with upward finite support admits a unique minimal rank decomposition, provided the underlying collection of intervals~$\Int$ itself is locally finite (Definition~\ref{def:locallyfinite}). At the end of Section~\ref{sec:rk-decomp_downward-finite} we use Möbius inversions to construct an explicit formula for the multiplicity of an interval in the decompositions under mild constraints. This formula ultimately allows us to derive an algorithm for computing minimal rank decompositions in multiparameter persistence; see~\eqref{eq:incl_excl_rect}. For completeness, we include the full details of Möbius inversions in Appendix~\ref{sec:localfinite-mobius}. We remark that while our key result in Section~\ref{sec:rk-decomp_downward-finite} is a consequence \newversion{of} our work in Section~\ref{sec:rk-decomp_general} and the theory of Möbius inversions, the presented proof is both self-contained and simpler than those two results combined. We therefore think our novel approach is of independent interest.

\medskip
The following proposition, which generalizes \cite[Proposition 3.17]{kim2018generalized} by dropping the assumption of local finiteness of the poset~$\pos$ and allowing for generalized ranks, and which is given a more direct proof, will be instrumental throughout our analysis.

\begin{proposition}\label{prop:rank_count} 
Let \( \Rec \) be a multi-set of intervals of $\pos$. Then, for any interval $\int\subseteq\pos$, we have:
\[ \Rk_\int( \field_{\Rec} ) = \card{\{ \rec \in \Rec \mid \int \subseteq \rec \}}. \]
\end{proposition}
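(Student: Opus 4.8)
The plan is to compute $\Rk_\int(\field_\Rec)$ directly from the definition $\Rk_\int(\field_\Rec) = \rank\left[\varprojlim \field_\Rec|_\int \to \varinjlim \field_\Rec|_\int\right]$, exploiting two facts: (i) limits and colimits commute with finite direct sums, and more to the point with the (possibly infinite) direct sums here because we are working pointwise and the relevant diagrams factor through finitely many coordinates; and (ii) for a single interval module $\field_\rec$, the generalized rank $\Rk_\int(\field_\rec)$ is exactly $\one_{\int\subseteq\rec}$. Granting (ii), the result follows by additivity: $\Rk_\int(\field_\Rec) = \sum_{\rec\in\Rec}\Rk_\int(\field_\rec) = \sum_{\rec\in\Rec}\one_{\int\subseteq\rec} = \card{\{\rec\in\Rec \mid \int\subseteq\rec\}}$.

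So the core of the argument is the single-interval computation (ii). First I would restrict attention to $\field_\rec|_\int$, which is the interval module on $\int$ supported on $\int\cap\rec$ (this is an interval of $\int$ when nonempty, by convexity and connectivity of both $\int$ and $\rec$, though one must check $\int\cap\rec$ is again connected — it is, since $\int$ is connected and... actually this needs care; alternatively just work with the diagram directly). The cleaner route: $\varprojlim \field_\rec|_\int$ is the space of compatible families, which is $\field$ if $\int\subseteq\rec$ (the whole diagram is constant $\field$ with identity maps, so the limit is $\field$ and maps isomorphically to the colimit $\field$), and is $0$ otherwise. For the "otherwise" case I would argue: if $\int\not\subseteq\rec$, pick $u\in\int\setminus\rec$; then $\field_\rec(u)=0$, so any element of the limit has zero component at $u$, and by connectivity of $\int$ one propagates this zero — using that the transition maps of $\field_\rec$ are either identities or zero, and a path from $u$ to any other vertex forces... hmm, this direction is slightly delicate because zero can propagate forward through a zero map but a compatible family could still be nonzero upstream. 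The robust statement is just about the composite limit$\to$colimit: I claim this composite is zero whenever $\int\not\subseteq\rec$. Indeed the colimit $\varinjlim\field_\rec|_\int$ is a quotient of $\bigoplus_{u\in\int}\field_\rec(u)$, and an element of the limit maps into it via any chosen $u$; choosing $u\notin\rec$ gives component $0$, so the image in the colimit is $0$, hence rank $0$.

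The main obstacle, then, is making the additivity step (i) rigorous when $\Rec$ is an infinite multi-set: one must know $\field_\Rec = \bigoplus_{\rec\in\Rec}\field_\rec$ lies in $\rep_\Int\pos$ (which is part of the standing hypothesis whenever we speak of such decompositions, and here we can simply assert $\Rk_\int(\field_\Rec)$ is being asked as an element of $\N\cup\{\infty\}$), and that $\varprojlim$ and $\varinjlim$ over the finite-type diagram $\int$ distribute over the direct sum. Since $\int$ is connected and convex, the colimit commutes with arbitrary direct sums automatically (colimits commute with colimits); the limit does not in general commute with infinite direct sums, but I would instead bound the rank of the composite by the sum of the ranks of the composites $\varprojlim\field_\rec|_\int \to \varinjlim\field_\rec|_\int$ using that the composite $\varprojlim\field_\Rec|_\int\to\varinjlim\field_\Rec|_\int$ factors (for each fixed $u\in\int$) through $\field_\Rec(u)=\bigoplus_\rec\field_\rec(u)$ and is compatible with the projections/inclusions to each summand, giving that its image is contained in the sum of the images of the individual composites — hence $\rank = \sum_\rec\rank[\varprojlim\field_\rec|_\int\to\varinjlim\field_\rec|_\int] = \card{\{\rec : \int\subseteq\rec\}}$, with the convention that the right-hand side may be $\infty$. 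I expect this bookkeeping around the infinite direct sum to be the only genuinely fussy part; everything else is the one-line observation that an interval module has generalized rank $1$ exactly over the sub-intervals it contains.
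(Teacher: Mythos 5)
Your route is genuinely different from the paper's and is essentially sound. The paper first restricts to $\int$ and splits $\field_{\Rec}|_{\int}$ into copies of the constant module $\field_{\int}$ (one for each $\rec\supseteq\int$) plus a sum of modules supported on proper subintervals of $\int$; it then kills the latter by observing that a proper subinterval cannot be closed under both predecessors and successors in $\int$, so each such summand has vanishing limit or vanishing colimit, and it handles the infinite direct sum by sorting these summands into a colimit-vanishing group (colimits commute with direct sums) and a limit-vanishing group (embed the direct sum into the product and use left-exactness of limits). Your mechanism for the single-summand computation is different and arguably cleaner: the canonical map $\varprojlim \to \varinjlim$ over a connected interval agrees with the composite through $\field_{\rec}(u)$ for \emph{every} $u\in\int$, so choosing $u\in\int\setminus\rec$ kills it outright; this also neatly sidesteps the possible disconnectedness of $\rec\cap\int$, which you rightly flagged (the paper explicitly notes these intersections need not be connected).

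The one step that does not close as written is the last one: showing that the image of the composite for $\field_{\Rec}$ is contained in the direct sum of the images of the individual composites gives only the inequality $\Rk_{\int}(\field_{\Rec}) \leq \card{\{\rec\in\Rec\mid\int\subseteq\rec\}}$, not the claimed equality. The reverse inequality is easy but must be supplied: for each $\rec\supseteq\int$, the family that is constantly $1$ in the $\rec$-summand and $0$ elsewhere is a legitimate element of $\varprojlim \field_{\Rec}|_{\int}$ (it lies pointwise in the direct sum), and its image in $\varinjlim \field_{\Rec}|_{\int} = \bigoplus_{\rec} \varinjlim \field_{\rec}|_{\int}$ is the generator of the $\rec$-th summand; these images are linearly independent, so the rank is at least $\card{\{\rec\in\Rec\mid\int\subseteq\rec\}}$, and is infinite when that set is infinite. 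With that observation added, your argument is complete.
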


\begin{proof}
Note that
\[ \field_{\Rec} |_{\int} = \bigoplus_{\rec \in \Rec} \field_{\rec \cap \int} = \bigoplus_{\substack{\rec \in \Rec \\ \int \subseteq \rec}} \field_{\int} \oplus \bigoplus_{\rec \in \widetilde{\Rec}} \field_{\rec}, \]
where \( \widetilde{\Rec} \) is a collection of proper subintervals of \( \int \). (Note that while the \( \rec \cap \int \) are not necessarily connected themselves, they are disjoint unions of intervals.)

Since the rank commutes with finite sums, and clearly \( \Rk_{\int} (\bigoplus_{\substack{\rec \in \Rec \\ \int \subseteq \rec}} \field_{\int}) = \card{\{ \rec \in \Rec \mid \int \subseteq \rec \}} \), it suffices to show that \( \Rk_{\int}(\bigoplus_{\rec \in \widetilde{\Rec}} \field_{\rec}) = 0 \) for any multi-set \( \widetilde{\Rec} \) of proper subintervals of \( \int \).

Our next step is to note that for any such proper subinterval \( \rec \) of \( \int \) at least one of \( \varprojlim \field_{\rec}|_{\int} \) or \( \varinjlim \field_{\rec}|_{\int} \) is zero: indeed, we may observe that \( \varprojlim \field_{\rec}|_{\int} \) is non-zero precisely if \( \rec \) is closed under predecessors in~\( \int \). Dually, \( \varinjlim \field_{\rec}|_{\int} \neq 0 \) precisely if \( \rec \) is closed under successors in \( \int \). Since a proper subinterval cannot be closed both under predecessors and successors, it follows that at least one of limit or colimit is zero.

Going back to our multi-set \( \widetilde{\Rec} \) of proper subintervals of \( \int \), we can now decompose it as \( \widetilde{\Rec} = \Rec' \cup \Rec'' \) where \( \Rec' \) only contains subintervals \( \rec \subsetneq \int \) with \( \varprojlim_\int \field_{\rec} = 0 \), and \( \Rec'' \) only contains subintervals \( \rec \subsetneq \int \) with \( \varinjlim_{\int} \field_{\rec} = 0 \). (This decomposition will typically not be unique, as there may be subintervals for which both limit and colimit vanish.) Again invoking the fact that rank commutes with finite sums, it suffices to show that \( \Rk_{\int}(\field_{\Rec'}) = 0 \) and  \( \Rk_{\int}(\field_{\Rec''}) = 0 \).

The latter is immediate, since direct sums commute with colimits, and hence \( \varinjlim_{\int} \field_{\Rec''} = 0 \). For the former, we additionally use that the direct sum is naturally a subrepresentation of the direct product, and moreover that limits are left exact. This gives us
\[ \varprojlim_{\int} \field_{\Rec'} \subseteq \varprojlim_{\int} \prod_{\rec \in \Rec'} \field_{\rec} = \prod_{\rec \in \Rec'} \varprojlim_{\int}  \field_{\rec} = 0. \qedhere \]
\end{proof}

\begin{remark}
One might hope that an analogous result holds if one replaced the sum in the definition of \( \field_{\Rec} \) by a product. However, the following example shows this to not be the case:

Let \( \int = \{ (x, y) \in \mathbb R^2 \mid x \geq 0, y \geq 0, x+y \leq 1 \} \), and pick an infinite set \( \{ p_s \mid s \in S \} \) of pairwise different maximal points in \( \int \). Let \( \rec_s = \int \setminus \{ p_s \} \). Then
\[ \Rk_{\int} \left(\prod_{s \in S} \field_{\rec_s}\right) = \infty. \]
(The key point here is that \( \varinjlim_{\int}  \prod_{s \in S} \field_{\rec_s} = \revisedversion{\prod_{s \in S} \field_{\rec_s} / \bigoplus_{s \in S} \field_{\rec_s}}\neq 0 \).)
\end{remark}

\begin{corollary} \label{cor:description of rep_I}
Let \( \Int \) be a collection of intervals in \( \pos \). For a multi-set \( \Rec \) of intervals, we have that \( \field_{\Rec} \in \rep_{\Int} \pos \) if and only if
\[ \forall \int \in \Int, \quad \card{ \{ \rec \in \Rec \mid \int \subseteq \rec \} } < \infty. \]
\end{corollary}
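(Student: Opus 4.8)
The plan is to obtain the statement as an immediate consequence of Proposition~\ref{prop:rank_count} combined with the definition of $\rep_\Int\pos$. By definition, a representation $M$ belongs to $\rep_\Int\pos$ exactly when its generalized rank invariant over $\Int$ is everywhere finite, that is, $\Rk_\int M < \infty$ for every $\int\in\Int$. Specializing to $M = \field_\Rec$ and plugging in the value $\Rk_\int\field_\Rec = \card{\{\rec\in\Rec\mid\int\subseteq\rec\}}$ furnished by Proposition~\ref{prop:rank_count}, the defining condition becomes verbatim the condition in the statement, so both directions of the equivalence fall out at once.

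The only point that deserves a moment of attention is that this argument must invoke Proposition~\ref{prop:rank_count} for a possibly infinite multi-set $\Rec$, and with the understanding that the equality there is an equality of cardinals (either side allowed to be $\infty$) --- indeed, this is precisely the regime in which the corollary is not vacuous. That proposition is stated and proved at exactly this level of generality: in its proof the decomposition $\field_\Rec|_\int \cong \field_\int^{(\kappa)}\oplus\field_{\widetilde{\Rec}}$ is valid for any cardinal $\kappa = \card{\{\rec\in\Rec\mid\int\subseteq\rec\}}$; the vanishing $\Rk_\int(\field_{\widetilde{\Rec}}) = 0$ rests only on colimits commuting with coproducts and on left-exactness of $\varprojlim$ together with its commuting with products, none of which needs finiteness; the additivity of $\Rk_\int$ over the two summands is just additivity of rank in cardinal arithmetic; and the spanning summand has $\varprojlim_\int\field_\int^{(\kappa)} = \field^{(\kappa)} = \varinjlim_\int\field_\int^{(\kappa)}$ with identity comparison map, hence rank $\kappa$. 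So the cited identity may be used with $\kappa$ infinite, and ``$\kappa < \infty$'' on both sides of the equivalence refers to the same thing.

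Accordingly, I do not foresee a genuine obstacle here: the corollary is a transcription of Proposition~\ref{prop:rank_count} through the definition of $\rep_\Int\pos$, and the proof will consist of little more than writing down that chain of equivalences. If there is a delicate point at all, it is purely notational --- being explicit that finiteness of $\Rk_\int\field_\Rec$ and finiteness of the cardinal $\card{\{\rec\in\Rec\mid\int\subseteq\rec\}}$ are one and the same assertion via Proposition~\ref{prop:rank_count} --- and that is handled in a single sentence.
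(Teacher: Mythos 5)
Your proof is correct and matches the paper's intent exactly: the corollary is stated without proof there precisely because it is the immediate transcription of Proposition~\ref{prop:rank_count} through the definition of $\rep_\Int\pos$ as the representations with everywhere-finite generalized rank invariant over $\Int$. Your extra check that Proposition~\ref{prop:rank_count} is stated and proved for arbitrary (possibly infinite) multi-sets, with values in $\N\cup\{\infty\}$, is accurate and is the only point worth being explicit about.
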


\subsection{Uniqueness}
\label{sec:rk-decomp_general}
Let $\Int$ be a collection of intervals in a poset~$\pos$. While a minimal rank decomposition need not exist in this case, we show that it is unique, provided it exists. 

Our first result shows that $\Rk_\Int$ is a complete invariant when restricted to interval-decomposable representations supported on intervals in $\Int$. In fact, we show that the rank invariant is complete on a slightly larger collection of interval-decomposable modules: \revisedversion{let  $\widehat{\Int}\supseteq\Int$ be the collection of intervals (which by construction are also intervals, i.e., non-empty connected convex subsets of~$\pos$) given by 
\begin{equation}
\label{eq:hatInt} \widehat{\Int} = \left\{ \bigcup X \mid X \subseteq \Int \text{ directed}\right\}, 
\end{equation}
where directed means that there for all $\int_1, \int_2 \in X$ exists an 
 $\int_3 \in X$ such that $\int_1 \cup \int_2 \subseteq \int_3$.

The following proposition generalizes~\cite[Theorem~3.14]{kim2018generalized}. 
\begin{proposition}\label{prop:complete}
Let $\Int$ denote a collection of intervals in $\pos$. If \( \Rec \) and \( \Rec' \) are two multi-sets of elements in \( \widehat{\Int} \), such that \( \Rk_\Int \field_{\Rec} =  \Rk_\Int \field_{\Rec'} \) and this common rank invariant is finite \revisedversion{for all intervals of $\Int$}, then \( \Rec = \Rec' \).
\end{proposition}

\begin{proof}
Since the rank of a direct sum is the sum of the ranks we may remove the common elements from \( \Rec \) and \( \Rec' \), and thus assume that the two multi-sets are disjoint.
\revisedversion{If $\Rec \cup \Rec'$ is empty, then we are done. Otherwise, by Proposition~\ref{prop:rank_count} and the definition of \( \widehat{\Int} \), there is \( \int \in \Int \) such that \( \Rk_{\int} \field_{\Rec}=\Rk_{\int} \field_{\Rec'}>0 \). Since this rank \newversion{is finite}, it follows, again using Proposition~\ref{prop:rank_count}, that there is a non-zero finite number of intervals containing \( \int \) in \( \Rec \cup \Rec' \). In particular there is a (not necessarily unique) maximal one, which we will call \( \jnt \).}
Without loss of generality we assume \( \jnt \in \Rec \). By definition of \( \widehat{\Int} \) we have \revisedversion{\( \jnt = \bigcup X\) for some directed \( X \subseteq \Int \).} Now, by assumption, for every \revisedversion{$\int \in X$  we have 
\[ \Rk_{\int} \field_{\Rec'} = \Rk_{\int} \field_{\Rec} \geq \Rk_{\int} \field_{\jnt}, \]
which equals $1$ by  Proposition~\ref{prop:rank_count}.
It also follows from Proposition~\ref{prop:rank_count} that, for each $\int \in X$,  there is some interval \( \int' \in \Rec' \) such that \( \int \subseteq \int' \). Fix some \( \int_0 \in X \) \newversion{and for each $I\in X$  choose an} \( \int' \) containing \( \int_0 \cup \int \) -- employing the directness condition.

Since \( \Rk_{\int_0} \field_{\Rec'} \) is finite, Corollary~\ref{cor:description of rep_I} says that there are actually only finitely many choices for \( \int' \). It follows, using the fact that \( X \) is directed, that there is an \( \int' \in \Rec' \) containing all \( \int \in X \). Thus \( \jnt \subseteq \int' \).} If this is a proper inclusion then it contradicts the maximality of \( \jnt \), otherwise it contradicts the disjointness of~\( \Rec \) and~\( \Rec' \).
\end{proof}}

\revisedversion{
\begin{example}
Let \( \pos \) be any poset, \newversion{and denote by \( \Int \) the intervals that arise as convex hulls of finitely many elements of \( \pos \). Then, \( \widehat{\Int} \) is the collection of \emph{all} intervals. Thus, by Proposition~\ref{prop:complete}, the ranks of finitely spanned intervals are a complete invariant on pfd interval-decomposable modules (supported on arbitrary intervals)}.
\label{ex:closedtoall}
\end{example}
}
We can now show that minimal rank decompositions, whenever they exist, satisfy a universality property.

\begin{theorem}\label{thm:uniqueness}
Let $\Rec, \Sec, \Rec^*, \Sec^*$ be multi-sets of elements of~$\widehat{\Int}$, whose corresponding representations lie in \( \rep_{\Int} \pos \), and such that $\Rec^*\cap \Sec^* = \emptyset$. 
If \[\Rk_\Int \field_\Rec - \Rk_\Int \field_\Sec = \Rk_\Int \field_{\Rec^*} - \Rk_\Int \field_{\Sec^*}\] then $\Rec\supseteq \Rec^*$, $\Sec\supseteq\Sec^*$, and $\Rec\setminus\Rec^* = \Sec\setminus\Sec^*$.
\end{theorem}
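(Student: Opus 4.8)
The plan is to eliminate the signs by moving the subtracted multi-sets across the equality, thereby reducing the statement to an equality of honest (unsigned) multi-sets of intervals, to which the completeness result Proposition~\ref{prop:complete} applies directly; the remainder is elementary multi-set bookkeeping in which the disjointness hypothesis $\Rec^*\cap\Sec^*=\emptyset$ does the decisive work.

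First I would rewrite the hypothesis $\Rk_\Int \field_\Rec - \Rk_\Int \field_\Sec = \Rk_\Int \field_{\Rec^*} - \Rk_\Int \field_{\Sec^*}$ as
\[ \Rk_\Int \field_\Rec + \Rk_\Int \field_{\Sec^*} = \Rk_\Int \field_\Sec + \Rk_\Int \field_{\Rec^*}. \]
By Proposition~\ref{prop:rank_count}, the generalized rank invariant of a direct sum of interval modules is additive in the summands (for each $\int\in\Int$ it merely counts the intervals of the multi-set that contain $\int$), so the displayed identity reads $\Rk_\Int \field_{\Rec\cup\Sec^*} = \Rk_\Int \field_{\Sec\cup\Rec^*}$, where $\cup$ denotes multi-set union. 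Moreover this common invariant is finite: since each of $\field_\Rec,\field_\Sec,\field_{\Rec^*},\field_{\Sec^*}$ lies in $\rep_\Int\pos$, Corollary~\ref{cor:description of rep_I} tells us that each of $\Rec,\Sec,\Rec^*,\Sec^*$ contains only finitely many intervals above any fixed $\int\in\Int$, hence so do $\Rec\cup\Sec^*$ and $\Sec\cup\Rec^*$, and thus $\field_{\Rec\cup\Sec^*},\field_{\Sec\cup\Rec^*}\in\rep_\Int\pos$. As $\Rec\cup\Sec^*$ and $\Sec\cup\Rec^*$ are multi-sets of elements of $\widehat{\Int}$ (being unions of such) with equal finite generalized rank invariant over $\Int$, Proposition~\ref{prop:complete} yields $\Rec\cup\Sec^* = \Sec\cup\Rec^*$ as multi-sets.

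It remains to unpack this multi-set equality. Fix an interval $\int\in\widehat{\Int}$ and write $a=\mult_\int\Rec$, $b=\mult_\int\Sec$, $a^*=\mult_\int\Rec^*$, $b^*=\mult_\int\Sec^*$. Then the equality gives $a+b^*=b+a^*$, while $\Rec^*\cap\Sec^*=\emptyset$ gives $\min(a^*,b^*)=0$. If $b^*=0$ then $a=b+a^*\ge a^*$; if instead $a^*=0$ then $a\ge 0=a^*$; either way $a\ge a^*$, so $\Rec\supseteq\Rec^*$, and symmetrically $b\ge b^*$, so $\Sec\supseteq\Sec^*$. Finally $a+b^*=b+a^*$ rearranges to $a-a^*=b-b^*$, which—now that $a\ge a^*$ and $b\ge b^*$—says precisely $\mult_\int(\Rec\setminus\Rec^*)=\mult_\int(\Sec\setminus\Sec^*)$; as $\int$ was arbitrary, $\Rec\setminus\Rec^* = \Sec\setminus\Sec^*$.

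I do not expect a real obstacle here: essentially all the substance is already carried by Proposition~\ref{prop:complete} (whose own proof, via the limit-interval argument, is the delicate point). The only things demanding care in the present argument are (i) verifying that the invariant obtained after the rearrangement is finite, so that Proposition~\ref{prop:complete} is legitimately applicable, and (ii) the multiplicity bookkeeping at the end, where the hypothesis $\Rec^*\cap\Sec^*=\emptyset$ is exactly what upgrades the bare identity $\Rec\setminus\Rec^* = \Sec\setminus\Sec^*$ to the full inclusions $\Rec\supseteq\Rec^*$ and $\Sec\supseteq\Sec^*$.
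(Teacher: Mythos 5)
Your proof is correct and follows essentially the same route as the paper's: rearrange the signed identity into an unsigned one, invoke additivity of the generalized rank invariant to reduce to $\Rk_\Int \field_{\Rec\cup\Sec^*} = \Rk_\Int \field_{\Sec\cup\Rec^*}$, apply Proposition~\ref{prop:complete}, and conclude via the disjointness of $\Rec^*$ and $\Sec^*$. You simply spell out the finiteness check and the final multiplicity bookkeeping, which the paper leaves implicit.
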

\begin{proof}
Rewriting the equation yields
        \[
    \Rk_\Int \field_\Rec   + \Rk_\Int\field_{\Sec^*} = \Rk_\Int\field_{\Rec^*}+ \Rk_\Int\field_{\Sec},
    \]
and by additivity of the rank invariant  %
    \[
    \Rk_\Int(\field_\Rec \oplus \field_{\Sec^*}) = \Rk_\Int(\field_{\Rec^*} \oplus \field_{\Sec}).
    \]
By Proposition~\ref{prop:complete} it follows that $\Rec\cup \Sec^* = \Rec^*\cup \Sec$. As $\Rec^*\cap \Sec^* = \emptyset$, we conclude that $\Rec\supseteq \Rec^*$, $\Sec\supseteq\Sec^*$, and $\Rec\setminus\Rec^* = \Sec\setminus\Sec^*$.
\end{proof}

As an immediate consequence of  Theorem~\ref{thm:uniqueness}, we obtain uniqueness and conditional existence of minimal rank decompositions:
\begin{corollary}\label{cor:min-decomp_exists_unique}
  The minimal rank decomposition $(\Rec^*, \Sec^*)$ of any map $r:\Int\to\Z$ is unique if it exists. Furthermore, if a rank decomposition $(\Rec, \Sec)$ of~$r$ exists, then a minimal one exists and is obtained from it by removing common intervals, that is:
  \[ (\Rec^*, \Sec^*) = (\Rec\setminus\Rec\cap\Sec, \Sec\setminus\Rec\cap \Sec). \]
  \end{corollary}

As another consequence of Theorem~\ref{thm:uniqueness}, we get a connection between the various rank decompositions of a map $\Int\to\Z$:

\begin{corollary}\label{cor:rank_decomp_vs_rank_decomp-first}
Any two rank decompositions $(\Rec, \Sec)$ and $(\Rec', \Sec')$ of $r\colon \Int \to \Z$ satisfy $\Rec\cup \Sec' = \Rec' \cup \Sec$.
\end{corollary}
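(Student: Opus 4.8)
The statement is a near-immediate consequence of the completeness result Proposition~\ref{prop:complete}; the argument is the same as the one used to prove Theorem~\ref{thm:uniqueness}, only without the disjointness hypothesis, so the conclusion weakens accordingly to the set-theoretic identity $\Rec \cup \Sec' = \Rec' \cup \Sec$.

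First I would rewrite the equality of the two rank decompositions $\Rk_\Int \field_\Rec - \Rk_\Int \field_\Sec = r = \Rk_\Int \field_{\Rec'} - \Rk_\Int \field_{\Sec'}$ by moving the subtracted terms to the opposite sides, obtaining $\Rk_\Int \field_\Rec + \Rk_\Int \field_{\Sec'} = \Rk_\Int \field_{\Rec'} + \Rk_\Int \field_{\Sec}$, and then invoke additivity of the generalized rank invariant under direct sums to turn this into $\Rk_\Int \bigl( \field_\Rec \oplus \field_{\Sec'} \bigr) = \Rk_\Int \bigl( \field_{\Rec'} \oplus \field_{\Sec} \bigr)$, i.e. $\Rk_\Int \field_{\Rec \cup \Sec'} = \Rk_\Int \field_{\Rec' \cup \Sec}$.

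Next I would check that this common rank invariant is finite, which is the only verification needed before applying Proposition~\ref{prop:complete}: since $\field_\Rec, \field_\Sec, \field_{\Rec'}, \field_{\Sec'}$ all lie in $\rep_\Int \pos$, Corollary~\ref{cor:description of rep_I} says each of $\Rec, \Sec, \Rec', \Sec'$ contains only finitely many intervals $\rec$ with $\int \subseteq \rec$, for every fixed $\int \in \Int$; hence the same holds for the multi-set unions $\Rec \cup \Sec'$ and $\Rec' \cup \Sec$, and a second application of Corollary~\ref{cor:description of rep_I} gives $\field_{\Rec \cup \Sec'}, \field_{\Rec' \cup \Sec} \in \rep_\Int \pos$, so the common rank invariant is indeed finite.

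Finally, applying Proposition~\ref{prop:complete} (allowed, since all multi-sets here consist of elements of $\Int \subseteq \widehat{\Int}$) to $\Rec \cup \Sec'$ and $\Rec' \cup \Sec$, which now have the same finite generalized rank invariant over $\Int$, yields $\Rec \cup \Sec' = \Rec' \cup \Sec$, as claimed. I do not expect any genuine obstacle: the result is essentially a restatement of completeness. Alternatively, one could deduce it from Theorem~\ref{thm:uniqueness} together with Corollary~\ref{cor:min-decomp_exists_unique}, by writing both $(\Rec,\Sec)$ and $(\Rec',\Sec')$ as the common minimal decomposition padded respectively by disjoint multi-sets $C$ and $C'$, whence $\Rec \cup \Sec' = \Rec^* \cup \Sec^* \cup C \cup C' = \Rec' \cup \Sec$.
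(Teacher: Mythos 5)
Your main argument is correct, and it takes a slightly different route from the paper's. You prove the corollary directly from Proposition~\ref{prop:complete}: rearrange to get $\Rk_\Int(\field_{\Rec}\oplus\field_{\Sec'})=\Rk_\Int(\field_{\Rec'}\oplus\field_{\Sec})$, verify finiteness via Corollary~\ref{cor:description of rep_I}, and invoke completeness. This is in essence a re-run of the first half of the proof of Theorem~\ref{thm:uniqueness}, which establishes exactly the identity $\Rec\cup\Sec^*=\Rec^*\cup\Sec$ before the disjointness hypothesis is ever used; so your observation that the corollary is ``Theorem~\ref{thm:uniqueness} without disjointness'' is accurate, and your finiteness check is a detail the paper leaves implicit. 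The paper instead proves the corollary as a formal consequence of Theorem~\ref{thm:uniqueness} together with the existence of the minimal decomposition (Corollary~\ref{cor:min-decomp_exists_unique}): writing $\Rec=\Rec^*\cup\Tec$, $\Sec=\Sec^*\cup\Tec$ and likewise with $\Tec'$ for the primed pair, whence $\Rec\cup\Sec'=\Rec^*\cup\Sec^*\cup\Tec\cup\Tec'=\Rec'\cup\Sec$. That is precisely the alternative you sketch in your last sentence (modulo your phrase ``disjoint multi-sets $C$ and $C'$''---the point is not that $C$ and $C'$ are disjoint from each other, but that the \emph{same} $C$ pads both $\Rec^*$ and $\Sec^*$). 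What each approach buys: yours is self-contained and does not require first establishing that a minimal decomposition exists; the paper's is shorter given the machinery already in place and makes the ``all decompositions are paddings of the minimal one'' picture explicit. Both are valid.
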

\begin{proof}
  Let $(\Rec^*, \Sec^*)$ be the minimal rank decomposition of $r$. From Theorem~\ref{thm:uniqueness}, we have $\Rec = \Rec^* \cup \Tec$ and $\Sec = \Sec^* \cup \Tec$ for some finite multi-set $\Tec$ of elements of $\Int$, and similarly, we have $\Rec' = \Rec^* \cup \Tec'$ and $\Sec' = \Sec^* \cup \Tec'$ for some multi-set $\Tec'$. Then,
  \[
  \Rec \cup \Sec' = \Rec^* \cup \Sec^* \cup \Tec \cup \Tec' = \Rec' \cup \Sec.
  \]
\end{proof}

\subsection{The locally finite case}
\label{sec:rk-decomp_downward-finite}

\revisedversion{In this section we equip the collection~\( \Int \) of intervals with the  inclusion order~$\subseteq$, and we assume that it is locally finite as a poset, according to the following definition.
\begin{definition}
\label{def:locallyfinite}
A poset $(P, \leq)$ is \emph{locally finite} if for all $p,q\in P$, the segment $\langle p,q\rangle = \{r\in P \mid p\leq r\leq q\}$ is a finite set. 
\end{definition}
}
We say a map \( \Int \to \Z \) has \revisedversion{\emph{upward finite support}} if its restriction to the upset of any element of \( \Int \)  has finite support. \newversion{As an example}, for any fixed \( \int \in \Int \), the map \( \Rk_\Int \field_\int\colon \jnt \mapsto \Rk_{\jnt} \field_{\int} \) has upward finite support by the description in Proposition~\ref{prop:rank_count} \revisedversion{and the fact that $\langle J,I\rangle$ is a finite set ($\Int$ is locally finite). }
%
More generally, we have the following result.
\newversion{\begin{proposition}
\label{prop:int_locally-finite-supp}
Let $\Rec$ be a multi-set of elements in \( \Int \). If \( \field_{\Rec} \in \rep_{\Int} \pos \), then the map \( \Rk_{\Int} \field_{\Rec} \) has upward finite support.
\end{proposition}}
\begin{proof}
 For any fixed \( \int \in \Int \), \revisedversion{we know from Corollary~\ref{cor:description of rep_I} that $\Rk_{I} \field_{R} \neq 0$ for a finite number intervals $R\in \Rec$. Let us denote this collection of intervals by $\Rec'$. Furthermore, if $I\subset I'$, then $\Rk_{I'} \field_{R} \neq 0$ implies that $R\in \Rec'$ by Proposition~\ref{prop:rank_count}. In particular, \[\Rk_{I'} \field_{\Rec} = \sum_{R\in \Rec'} \Rk_{I'} \field_{R},\]
and this sum is non-zero for a finite number of $I'\subseteq I$ since each $\Rk_{\Int} \field_{R}$ has upward finite support. }
\end{proof}

\revisedversion{
\begin{example}
The poset $\mathbb{Z}$ is locally finite but $\mathbb{R}$ is not. Moreover, let $R=\langle 0,1\rangle$ and let $\Int$ be the collection of intervals in $\mathbb{R}$. Then, \( \Rk_\Int \field_R\) does not have upward finite support: there is an infinite number of intervals $I$ satisfying $\langle 0.5, 0.5\rangle \subseteq I\subseteq \langle 0,1\rangle$. 
\end{example}}

\revisedversion{In the setting of upward finite supports, the existence of minimal rank decompositions can also be \newversion{shown} using Möbius inversions; we have included the full details in the appendix for completeness. Here we present a self-contained and simpler proof. }

\begin{theorem}\label{th:basis}
Let \( \Int \) be a locally finite collection of intervals in \( \pos \). Then any function \( r \colon \Int \to \Z \) with upward finite support can uniquely be written as a (possibly infinite, but pointwise finite) $\Z$-linear combination of the functions \( \Rk_\Int \field_{\int} \) with \( \int \in \Int \).
\end{theorem}

\begin{proof}
  Existence: For any \( \int \in \Int \) we set \( S_{\int} = \{ \jnt \supseteq \int \mid \exists \knt \supseteq \jnt \text{ with } r(\knt) \neq 0 \} \).
  Since \( r \) has upward finite support, its support restricted to the upset of \( \int \) is finite, and so is $S_{\int}$ since \( \Int \) is locally finite.

Now we define a collection of scalars \( \alpha_{\int} \in \Z \) for \( \int \in \Int \), inductively on the size of \( S_{\int} \): If \( S_{\int} = \varnothing \) we set \( \alpha_{\int} = 0 \). Otherwise we set 
\[ \alpha_{\int} = r(\int) - \sum_{\jnt \in S_{\int} \setminus \{ \int\}} \alpha_{\jnt}. \] 
Note that for \( \jnt \in S_{\int} \setminus \{ \int\} \) we have \( S_{\jnt} \subsetneq S_{\int} \), so the terms on the right hand side are already defined. 

Now, using the description of the map \( \Rk_\Int \field_{\int} \) in Proposition~\ref{prop:rank_count}, one immediately verifies that \( r = \sum_{\int \in \Int} \alpha_{\int} \Rk_\Int \field_{\int} \). (Note in particular that this infinite sum is pointwise finite --- on a given interval \( \jnt \) the only possibly non-zero terms are the ones in \( S_{\jnt} \) ---hence well-defined.)

Uniqueness: subtracting two different $\Z$-linear combinations realizing \( r \) from each other, we get a single linear combination \( \sum_{\int \in \Int} \alpha_{\int} \Rk_\Int \field_{\int} \) with non-zero coefficients which sums up to zero. Note that there is at least one maximal \( \int \in \Int \) such that \( \alpha_{\int} \neq 0 \), for otherwise the sum would not be defined. It follows, again using Proposition~\ref{prop:rank_count}, that \( (\sum_{\jnt \in \Int} \alpha_{\jnt} \Rk_\Int \field_{\jnt})(\int) = \alpha_{\int} \neq 0 \), contradicting our assumption.
\end{proof}

\begin{corollary}\label{cor:min_gen-rank_decomp}
Let $\Int$ be a locally finite collection of intervals in $\pos$. Then, for any map $r \colon \Int \to\Z$ with upward finite support, there is a unique pair $\Rec, \Sec$ of disjoint multi-sets of elements of $\Int$ such that \( \field_{\Rec} \) and \( \field_{\Sec} \) lie in $\rep_\Int \pos$  and satisfy the following identity:
\[ r = \Rk_\Int \field_{\Rec} - \Rk_\Int \field_{\Sec}. \]
\end{corollary}
\begin{proof}
  By Theorem~\ref{th:basis}, there is a unique (possibly infinite, but pointwise finite) $\Z$-linear combination of functions $r=\sum_{\int\in\Int} \alpha_{\int}\, \Rk_\Int \field_{\int}$.
  Let then $\Rec = \{ \int\in\Int \mid \alpha_{\int}>0\}$ with multiplicities $\int\mapsto \alpha_{\int}$, and  $\Sec = \{ \int\in\Int \mid \alpha_{\int}<0\}$ with multiplicities $\int\mapsto |\alpha_{\int}|$. It follows from the pointwise-finiteness of the linear combination that \( \Rec \) and \( \Sec \) satisfy the condition in Corollary~\ref{cor:description of rep_I}, so in particular \( \field_{\Rec} \) and \( \field_{\Sec} \) lie in \( \rep_{\Int} \pos \).
\end{proof}

Specializing Theorem~\ref{th:basis} and Corollary~\ref{cor:min_gen-rank_decomp} to the case where $\pos$ is finite and $\Int = \{\seg{i,j} \mid i\leq j\in\pos\}\simeq \RelationAsSet{\pos}$ yields the following results---where $\Rk_\Int$ becomes the usual rank invariant~$\Rk$:
\begin{corollary}
Let \( \pos \) be a finite poset. Then the collection of maps \( \Rk \field_{\seg{a, b}} \) with \( a \leq b \) is a basis \revisedversion{for the free abelian group} \( \Z^{\RelationAsSet{\pos}} \).
\end{corollary}

\begin{corollary}\label{cor:min_rank_decomp}
Given a finite poset $\pos$, for any map $r: \RelationAsSet{\pos} \to\Z$  there is a unique pair $\Rec, \Sec$ of disjoint finite multi-sets of closed segments such that
\[ r = \Rk \field_{\Rec} - \Rk \field_{\Sec}. \]
\end{corollary}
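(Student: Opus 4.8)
The plan is to deduce this statement directly from Corollary~\ref{cor:min_gen-rank_decomp} by verifying that its hypotheses hold in the present finite setting. First I would fix the collection $\Int = \{\seg{i,j} \mid i\leq j\in\pos\}$ of closed segments of $\pos$; each such segment is convex by construction and connected (every $u\in\seg{i,j}$ is comparable to both $i$ and $j$), so $\Int$ is genuinely a collection of intervals, and via the identification $\Int\simeq\RelationAsSet{\pos}$ recalled in Remark~\ref{rem:usual_rank-inv} a map $r\colon\RelationAsSet{\pos}\to\Z$ is the same data as a map $\Int\to\Z$. Since $\pos$ is finite, $\Int$ is a finite set, hence in particular locally finite; and $r$, having finite domain, automatically has locally finite support. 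So Corollary~\ref{cor:min_gen-rank_decomp} applies and produces a unique pair of disjoint multi-sets $\Rec,\Sec$ of elements of $\Int$ with $\field_\Rec,\field_\Sec\in\rep_\Int\pos$ and $r=\Rk_\Int\field_\Rec-\Rk_\Int\field_\Sec$.

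It then remains to perform two routine translations. First, $\Rk_\Int$ coincides with the usual rank invariant $\Rk$: this is exactly the content of Remark~\ref{rem:usual_rank-inv} for $\Int$ the collection of closed segments, so the identity becomes $r=\Rk\field_\Rec-\Rk\field_\Sec$. Second, $\Rec$ and $\Sec$ are \emph{finite} multi-sets: by Corollary~\ref{cor:description of rep_I}, the condition $\field_\Rec\in\rep_\Int\pos$ says that for every $\int\in\Int$ only finitely many elements of $\Rec$ contain $\int$; applying this with $\int=\{i\}=\seg{i,i}$ for each of the finitely many $i\in\pos$, and noting $\sum_{i\in\pos}\card{\{\rec\in\Rec\mid i\in\rec\}}=\sum_{\rec\in\Rec}\card{\rec}\geq\card{\Rec}$, we conclude that $\Rec$ is finite, and symmetrically $\Sec$.

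There is no genuine obstacle here; the only points deserving a line of justification are the finiteness of $\Rec$ and $\Sec$ (which I expect to be the fiddliest bookkeeping, though it is immediate from pointwise-finiteness plus finiteness of $\pos$) and the passage from $\Rk_\Int$ to $\Rk$. Uniqueness is inherited verbatim from Corollary~\ref{cor:min_gen-rank_decomp}, ultimately from the uniqueness clause of Theorem~\ref{th:basis}. If a self-contained argument were preferred, one could instead start from the immediately preceding corollary---that $(\Rk\field_{\seg{a,b}})_{a\leq b}$ is a basis of $\Z^{\RelationAsSet{\pos}}$---write $r=\sum_{a\leq b}\alpha_{\seg{a,b}}\,\Rk\field_{\seg{a,b}}$, and set $\Rec=\{\seg{a,b}\mid\alpha_{\seg{a,b}}>0\}$ and $\Sec=\{\seg{a,b}\mid\alpha_{\seg{a,b}}<0\}$ with multiplicities $\seg{a,b}\mapsto\lvert\alpha_{\seg{a,b}}\rvert$, disjointness and the defining identity then being clear.
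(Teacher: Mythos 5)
Your proposal is correct and follows exactly the paper's route: the paper obtains this corollary by specializing Corollary~\ref{cor:min_gen-rank_decomp} (equivalently Theorem~\ref{th:basis}) to a finite poset with $\Int=\{\seg{i,j}\mid i\leq j\in\pos\}\simeq\RelationAsSet{\pos}$, using Remark~\ref{rem:usual_rank-inv} to identify $\Rk_\Int$ with $\Rk$. Your extra check that $\Rec$ and $\Sec$ are finite (via Corollary~\ref{cor:description of rep_I} applied to singletons) is a correct piece of bookkeeping the paper leaves implicit.
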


\subsubsection*{Möbius inversions and an explicit formula}
\revisedversion{In Appendix~\ref{sec:localfinite-mobius} we recall how Möbius inversions can be used to construct minimal rank decompositions. Importantly, the Möbius inversion approach gives a way of computing the multiplicity of each interval in the decomposition from a simple inclusion-exclusion formula. The following is a direct consequence of Proposition~\ref{prop explicit mobius inverse} and Proposition~\ref{prop:alpha_mobius}. }

\begin{corollary}\label{cor:alpha_formula}
Let \( \Int \) be a locally finite collection of intervals in a poset \( \pos \), and suppose  that, for any \( \int \in \Int \), there is a finite set \( \int^+ \subseteq \Int \) with the property that
\[ \{ \jnt \in \Int \mid \jnt \supsetneq \int \} = \{ \jnt \in \Int \mid \exists \knt \in \int^+ \colon \jnt \supseteq \knt \}, \]
and that any subset of \( \int^+ \) has a join in \( \Int \).
Let $r\colon \Int\to\Z$ have upward finite support. A pair $(\Rec, \Sec)$ of locally finite multisets of elements of \( \Int \) is a rank decomposition of \( r \) if and only if
 \[ \mult_{\int} \Rec - \mult_{\int} \Sec = r(\int) +  \sum_{\varnothing \neq x \subseteq \int^+} (-1)^{\card x}\, r( \vee x ) \qquad \forall \int \in \Int. \]
\end{corollary}

\begin{remark}\label{rem:alpha_formula}
Dropping the assumption that \( \Int \) is locally finite as a poset, we still get that if \( (\Rec, \Sec) \) is a rank decomposition then the equation of the corollary is satisfied---this follows from Remark~\ref{rem Mobius one sided}.
\end{remark}

The formula of Corollary~\ref{cor:alpha_formula} is most interesting in certain scenarios, e.g. when $\pos$ is a finite grid as in multi-parameter persistence---details and examples are given in Section~\ref{sec:rank-decomp_pers}.

\newversion{
\section{Rank invariants and poset homomorphisms}
\label{sec:ri-poset} Let \( f \colon P \to Q \) be a poset homomorphism and let  \( \operatorname{res}_f \colon \Rep Q \to \Rep P \) denote the restriction functor obtained by pre-composing with $f$, i.e., the functor given on objects by \( (\operatorname{res}_f M)(p) = M(f(p)) \) and on morphisms in the obvious way. A typical example to keep in mind is \( P \) being a subposet of \( Q \), \( f \) being the inclusion of this subposet, and \( \operatorname{res}_f \) the usual restriction. Another example is when $Q\subset P=\R^d$ for $Q$ a finite grid, and $f$ is the lattice homomorphism given by ''rounding down'' to the nearest grid point. Importantly, any finitely presented $\R^d$-module is of the form $ \operatorname{res}_f M$ for some $M\in \rep Q$. 

In this section we focus on the following problem: does a rank decomposition of $M\in \rep Q$ pull back to a rank decomposition of $\operatorname{res}_f M$? In Section~\ref{sec:restrictions_usual}, we answer this in the affirmative for the standard rank invariant. In  Section~\ref{sec:restrictions}, we show that the case of the generalized rank invariant is more subtle and that one in general cannot obtain a rank decomposition of $\operatorname{res}_f M$ from $M$. However, we show that one does obtain such a rank decomposition if $f$ is assumed to be a homomorphism of lattices. This allows us to study (generalized) rank decompositions of finitely presented $\R^d$-modules. Such modules are integral to multiparameter persistence and are not covered by our work in Section~\ref{sec:rk-decomp_downward-finite} as $\R^d$ is not locally finite.
}

\revisedversion{
\subsection{The usual rank invariant and poset homomorphisms}
\label{sec:restrictions_usual}
\newversion{First we show that the usual rank invariant behaves well under restrictions along poset maps. Then, we specialize to the setting of grids in $\R^d$ in order to obtain minimal rank decompositions of finitely presented $\R^d$-modules.} 

\begin{lemma} \label{lemma usual rank restricts}
Let \( M \in \rep Q \). Then for any \( a \leq b \in P \) we have
\[ \Rk_{\seg{a,b}} \operatorname{res}_f M = \Rk_{\seg{f(a), f(b)}} M. \]
\end{lemma}

\begin{proof}
This follows directly from the construction of \( \operatorname{res}_f \): \( (\operatorname{res}_f M)(a) = M(f(a)) \), \( (\operatorname{res}_f M)(b) = M(f(b)) \), and the structure map \( (\operatorname{res}_f M)(a) \to (\operatorname{res}_f M)(b) \) is the structure map \( M(f(a)) \to M(f(b)) \).
\end{proof}

\begin{proposition} \label{pro usual rank and restriction}
Let \( f \colon P \to Q \) be a poset homomorphism. Let \( M \in \rep Q \), and assume that there are multi-sets \( \Rec \) and \( \Sec \) of intervals in \( P \) such that \( \Rk M = \Rk \field_{\Rec} - \Rk \field_{\Sec} \).

Then
\[ \Rk \operatorname{res}_f M = \Rk \field_{f^{-1}(\Rec)} - \Rk \field_{f^{-1}(\Sec)}, \]
where \( f^{-1}(\Rec) = \{ f^{-1}(R) \mid R \in \Rec \} \) and similar for \( \Sec \).
\end{proposition}

\begin{remark}
Note that even if \( R \) is an interval, \( f^{-1} \) might not be: It is convex, but might be disconnected or empty.
\end{remark}

\begin{proof}
For \( a \leq b \in P \) we have \( \Rk_{\seg{a,b}} \operatorname{res}_f M = \Rk_{\seg{f(a), f(b)}} M \) by Lemma~\ref{lemma usual rank restricts}. On the other hand 
\begin{align*}
\Rk_{\seg{a,b}} f^{-1}(\Rec) & = \card{\{ f^{-1}(R) \in f^{-1}(\Rec) \mid \seg{a,b} \subseteq f^{-1}(R) \}}  \\
& = \card{\{ R \in \Rec \mid \underbrace{f(\seg{a,b})}_{=\seg{f(a),f(b)}} \subseteq R \}} \\
& = \Rk_{\seg{f(a), f(b)}} \Rec
\end{align*}
where the first and last equalities hold by Proposition~\ref{prop:rank_count}. It follows that the given equality of ranks implies the desired one.
\end{proof}

In \( \mathbb{R}^d \) we often consider ``half open rectangles'', that is, intervals of the form \( \prod_{i=1}^d [a_i, b_i) \). \newversion{Unlike segments, these have ﬁnitely presented indicator modules, which will be useful for extending our theory to ﬁnitely presented multiparameter persistence modules.} Note that it follows from \newversion{Example~\ref{ex:closedtoall} and Theorem~\ref{thm:uniqueness}} that a decomposition of the usual rank invariant into half open rectangles is still unique if it exists. }

\begin{example}\label{ex upward line} 
Let \( P = \mathbb{R} \), \( Q = \mathbb{R}^d \), and \( s \) and \( t \) points in \( \mathbb{R}^d \) such that \( s_i < t_i \text{ for all } i \). Consider the map
\[ f \colon P \to Q \colon \lambda \mapsto (1-\lambda)s + \lambda t. \]
This is a poset homomorphism, whose image in~$Q$ is an upward sloping line. Let \( M \in \rep Q \), and assume there is a decomposition of the usual rank as
\[ \Rk M = \Rk \field_\Rec - \Rk \field_\Sec \]
for multi-sets \( \Rec \) and \( \Sec \) of half open (resp. open, closed, arbitrary) rectangles. Then we also have a decomposition of the usual rank invariant of the restriction as
\[ \Rk \operatorname{res}_f M = \Rk \field_{f^{-1}(\Rec)} - \Rk \field_{f^{-1}(\Sec)} \]
where the preimage of each half-open (resp. open, closed, arbitrary) rectangle is just the intersection of this rectangle with the upward sloping line, therefore it is either empty or a half-open (resp. open, closed, arbitrary) interval.
\end{example}

\newversion{For the remainder of this section we assume that $P = \prod_{i=1}^d F_i \subset \mathbb{R}^d$ where each $F_i$ is a finite subset of $\mathbb{R}$. We shall refer to $P$ as a \emph{finite grid}. }

\begin{example} \label{example restriction to grid}

Let \( M \in \rep \mathbb{R}^d \), and assume there is a decomposition of the usual rank as
\[ \Rk M = \Rk \field_\Rec - \Rk \field_\Sec \]
for multi-sets \( \Rec \) and \( \Sec \) of half open rectangles. Then \( (\Rec \cap P, \Sec \cap P) \) is a usual rank decomposition of the restriction of \( M \) to \( P \). Here \( \Rec \cap P = \{ R \cap P \mid R \in \Rec \} \) and similar for \( \Sec \). Note that the intersection of any half-open rectangle with the finite grid \( P \) is either a segment or empty, whence we get the honest rank decomposition.
\end{example}

Our final aim of this subsection is to do the opposite of the above example, and go from a finite grid to \( \mathbb{R}^d \). We can extend modules and intervals as follows:

The restriction functor \( \operatorname{res} \colon \rep \mathbb{R}^d \to \rep P \) has a left adjoint, called ``left Kan extension'' and denoted by \( \lkan \) (see Section~\ref{sec:upper_semi-lattices} for an explicit definition). In the next proposition the left Kan extension will take a particularly simple form.

For segments \( R \) in \( P \) we define the extension to \( \mathbb{R}^d \) as
\[ \widehat{R} = \{ p \in \R^d \mid \exists r \in R \colon r \leq p \text{ and } \nexists r \in R,s\in P\setminus R \colon r \leq s \leq p \}, \]
that is, we extend \( R \) upward while we forbid passing grid-points that are not in~\( R \).

With these two concepts, we get the following result.

\begin{proposition} \label{prop usual rank and lkan}
With \( P \) a finite grid in \( \mathbb{R}^d \) as above, let \( M \in \rep P \). Let \( ( \Rec, \Sec ) \) be a rank decomposition of \( M \). Then the left Kan extension of \( M \) to \( \mathbb{R}^d \) has a decomposition of the usual rank invariant as
\[ \Rk \lkan M = \Rk \field_{\widehat{\Rec}} - \Rk \field_{\widehat{\Sec}} \]
where \( \widehat{\Rec} = \{ \widehat{R} \mid R \in \Rec \} \), and similar for \( \widehat{\Sec} \) are multi-sets of half-open rectangles.
\end{proposition}

\begin{proof}
We consider the extended
\[ \prod_{i=1}^d \{ - \infty \} \cup F_i \subseteq ( \{ - \infty \} \cup \mathbb{R})^d \] 
for the following reason: This inclusion has a right adjoint, which we will call \( \lfloor \cdot \rfloor \), given by ``rounding down to the nearest grid-point''. We will apply Proposition~\ref{pro usual rank and restriction} to this right adjoint. Clearly we can consider \( M \) a representation of this extended grid (by putting \( 0 \) in the new points). Likewise we can consider the segments in \( \Rec \) and \( \Sec \) as segments in the extended grid, and they still define a rank decomposition of \( M \).

Note that the adjoint pair of poset homomorphisms induces an adjoint pair of restriction functors (in the opposite order). In particular restriction along \( \lfloor \cdot \rfloor \) gives left Kan extensions along the inclusion of posets. Thus \( \lkan M = \operatorname{res}_{\lfloor \cdot \rfloor} M \).

Now by Proposition~\ref{pro usual rank and restriction} we have the rank decomposition
\[ \Rk \lkan M = \Rk \field_{\lfloor \cdot \rfloor^{-1}(\Rec)} - \Rk \field_{\lfloor \cdot \rfloor^{-1}(\Sec)}. \]

We note that \( \widehat{R} \) is constructed to match \( \lfloor \cdot \rfloor^{-1}(R) \), so the right hand side coincides with \( \Rk \field_{\widehat{\Rec}} - \Rk \field_{\widehat{\Sec}} \).

At first this equality holds in \( (\{ - \infty \} \cup \mathbb{R})^d \), but we may restrict to \( \mathbb{R}^d \) (and in fact all terms vanish on all points involving ``\(- \infty\)").

Finally, it follows from the grid structure of \( P \) that the intervals of the form \( \widehat{R} \) are half-open rectangles.
\end{proof}

\begin{corollary}\label{cor:decomp_half-open}
Let \( M \in \rep \mathbb{R}^d \) be finitely presented. Then there are unique disjoint multi-sets \( \Rec \) and \( \Sec \) of half-open rectangles, such that we have a decomposition of the usual rank invariant as  
\[ \Rk M = \Rk \field_{\Rec} - \Rk \field_{\Sec}. \]
\end{corollary}

\begin{proof}
Since \( M \) is finitely presented it is the left Kan extension of a representation of a finite grid. (Consider the positions of all generators and relations of \( M \), and then form the smallest grid containing all these points.) For this finite grid, we have rank decompositions by Corollary~\ref{cor:min_gen-rank_decomp}. Now apply Proposition~\ref{prop usual rank and lkan}.
\end{proof}

\revisedversion{
\subsection{The generalized rank invariant and poset homomorphisms}

\label{sec:restrictions}

As in the previous subsection, we consider a poset homomorphism \( f \colon P \to Q \). While there we asked for the preservation of decompositions of the usual rank, here we will consider more general rank recompositions. While the strategy is quite parallel, note that the starting point of the previous section -- Lemma~\ref{lemma usual rank restricts} -- was quite a simple observation, while the corresponding statement here -- Corollary~\ref{cor restriction for lattices} -- is somewhat deeper. \newversion{As in the previous section, we ultimately turn to finitely presented $\R^d$-modules and their generalized rank decompositions. }

\medskip
As the generalized rank invariant is defined via limits and colimits, the first step is to understand how these are affected by the restriction functor. To this end, let \( \int \) be an interval in \( P \). We denote by \( f(\int) \) its image in \( Q \), and by \( \overline{f(\int)} \) the convex hull of this image, which is an interval in \( Q \).

\begin{lemma} \label{lemma same limit on restriction}
In the situation above, for any \( M \in \Rep Q \) we have a natural morphism
\[ \varprojlim M|_{\overline{f(\int)}} \to \varprojlim (\operatorname{res}_f M)|_\int, \]
such that for any \( i \in \int \) the triangle formed by the above morphism  and the maps \( \varprojlim M|_{\overline{f(\int)}} \to M(f(i)) \) and \( \varprojlim (\operatorname{res}_f M)|_\int \to (\operatorname{res}_f M)(i) = M(f(i)) \) commutes.

If moreover for any \( j \in \overline{f(\int)} \) the subposet
\[ \{ i \in \int \mid f(i) \leq j \} \]
is connected, then the natural morphism above is an isomorphism.
\end{lemma}

\begin{proof}
Note that \( \varprojlim M|_{\overline{f(\int)}} \) comes with a cone of maps to all \( M(j) \) for \( j \in \overline{f(\int)} \). Considering only the \( j \) of the form \( f(i) \) for some \( i \in \int \), we obtain a cone of maps to all \( M(f(i)) = (\operatorname{res}_f M)(i) \). This cone induces the desired map $\varprojlim M|_{\overline{f(\int)}} \to \varprojlim (\operatorname{res}_f M)|_\int$. The claimed commutativities hold by construction.

For the ``moreover'' point we construct a map in the opposite direction. Again we start with the cone of maps from \( \varprojlim (\operatorname{res}_f M)|_\int \) to the various \( (\operatorname{res}_f M)(i) = M(f(i)) \). We obtain maps to \( M(j) \) for \( j \in \overline{f(\int)} \) by choosing \( i \in \int \) such that \( f(i) \leq j \), and composing with the structure map \( M(f(i)) \to M(j) \). The potential issue with this construction is well-definedness, i.e., independence of the choice of \( i \).  Note that for a given \( j \) our choice for possible \( i \)'s is precisely the collection described in the extra assumption. Therefore, it suffices to check that two compatible choices for \( i' \leq i'' \) lead to the same map to \( M(j) \). This is immediate from the commutativity of both triangles in the following diagram.
\[ \begin{tikzpicture}[xscale=1.5,yscale=.5]
 \node (A) at (0,0) {\(\varprojlim (\operatorname{res}_f M)|_\int \)};
 \node (B) at (2,1) {\(M(f(i'))\)};
 \node (C) at (3,-1) {\(M(f(i''))\)};
 \node (D) at (5,0) {\(M(j)\)};
 \draw [->] (A) to (B);
 \draw [->] (A) to (C);
 \draw [->] (B) to (C);
 \draw [->] (B) to (D);
 \draw [->] (C) to (D);
\end{tikzpicture} \]
With well-definedness established, we also have a cone: for \( j' \leq j'' \) choose \( i \) such that \( f(i) \leq j' \) for both of them. From the universality of limits, we get a map \(\varprojlim (\operatorname{res}_f M)|_\int \to \varprojlim M|_{\overline{f(\int)}} \).

As above, it follows from the construction that the triangles formed by this morphism and the respective maps to the \( M(f(i)) \) commute, and, in particular, that the composition of the two maps between the limits behaves like identity after composition with a map to \( M(f(i)) \). We conclude that the composition \( \varprojlim (\operatorname{res}_f M)|_\int \to \varprojlim M|_{\overline{f(\int)}} \to \varprojlim (\operatorname{res}_f M)|_\int \) is the identity. For the other composition the same claim follows when observing that for all \( j \in \overline{f(\int)} \) there is an~\( i \) with \( f(i) \leq j \). 
\end{proof}

\begin{proposition} \label{prop same rank after restriction}
Let \( f \colon P \to Q \) be a poset homomorphism. Then for any interval \( \int \) in \( P \) and \( M \in \Rep Q \) we have
\[ \Rk_{\overline{f(\int)}} M \leq \Rk_{\int} \operatorname{res}_f M. \]

If moreover for any \( j \in  \overline{f(\int)} \) the posets \( \{ i \in \int \mid f(i) \leq j \} \) and \( \{ i \in \int \mid f(i) \geq j \} \) are connected then we have equality.
\end{proposition}

\begin{proof}
Pick any \( i \in \int \). In the diagram
\[ \begin{tikzpicture}[xscale=1.7]
 \node (A) at (0, .5) { \( \varprojlim M|_{\overline{f(\int)}}  \) };
 \node (B) at (0, -.5) { \( \varprojlim (\operatorname{res}_f M)|_\int \) };
 \node (C) at (2,0) { \( M(f(i)) \) };
 \node (D) at (4, .5) { \( \varinjlim M|_{\overline{f(\int)}}  \) };
 \node (E) at (4, -.5) { \( \varinjlim (\operatorname{res}_f M)|_\int \) };
 \draw [->] (A) to node[left] {\( \cong \)} (B);
 \draw [->] (A) to (C);
 \draw [->] (B) to (C);
 \draw [->] (C) to (D);
 \draw [->] (C) to (E);
 \draw [->] (D) to node[right] {\( \cong \)} (E);
\end{tikzpicture} \]
the left vertical map is an isomorphism, such that the left triangle commutes, by Lemma~\ref{lemma same limit on restriction}. By the dual of that lemma the right vertical map is an isomorphism and the right triangle commutes. Thus the natural maps \( \varprojlim M|_{\overline{f(\int)}} \to \varinjlim M|_{\overline{f(\int)}} \) and \( \varprojlim (\operatorname{res}_f M)|_\int \to \varinjlim (\operatorname{res}_f M)|_\int  \) are connected by isomorphisms, and in particular have the same rank.
\end{proof}

\begin{example}
One might ask if an invariant as natural as rank should always commute with restrictions, without the technical extra assumptions above. However, consider the subposets of \( \mathbb{R}^2 \) given as \( P = \{ (1,0), (0,1), (2,2) \} \) and \( Q = P \cup \{ (1,1) \} \). Consider the (indecomposable) representation of \( Q \) depicted as
\[ \begin{tikzpicture}
 \node at (-1,1) {\( M = \)};
 \node (A) at (1, 0) {\( \field \)};
 \node (B) at (0,1) {\( \field \)};
 \node (C) at (1,1) {\( \field^2 \)};
 \node (D) at (2,2) {\( \field \)};
 \draw [->] (A) to node[right=-.5mm] {\( \left[ \begin{smallmatrix} 1 \\ 0 \end{smallmatrix} \right] \)} (C);
 \draw [->] (B) to node[above=-.5mm] {\( \left[ \begin{smallmatrix} 0 \\ 1 \end{smallmatrix} \right] \)} (C);
 \draw [->] (C) to node[below right=-1mm] {\( \left[ 1 \; 1 \right] \)} (D);
\end{tikzpicture} \]
\newversion{Let \( f \colon P \to Q \) be the} natural inclusion, \( \int = P \), and then \( \overline{f(\int)}  = Q \). In this situation we have
\[ \Rk_{\overline{f(\int)}} M = 0 \lneq \Rk_{\int} \operatorname{res}_f M = 1. \]

As one should expect from the strict inequality, the assumption in the last sentence of Proposition~\ref{prop same rank after restriction} are not met. Indeed \( \{ i \in \int \mid f(i) \leq (1,1) \}  = \{(1,0), (0,1) \} \) is not connected.
\end{example}

\begin{example}
\newversion{Let $f\colon P\to Q$ be a poset homomorphism.} If \( \int = \seg{a,b} \) is a segment, then \( \{ i \in \int \mid f(i) \leq j \} \) is connected, since it contains \( a \) and any other element is compatible with \( a \). Similarly \( \{ i \in \int \mid f(i) \geq j \} \) is connected. \newversion{In particular, Proposition~\ref{prop same rank after restriction} generalizes Lemma~\ref{lemma usual rank restricts}.}
\end{example}

\newversion{The conditions for equality in Proposition~\ref{prop same rank after restriction} hold for general intervals provided one restricts to maps between lattices.}

\begin{lemma}
Let \( f \colon P \to Q \) be a morphism of lattices. Then for any interval \( \int \) in \( P \), and \( j \in \overline{f(\int)} \), the posets \( \{ i \in \int \mid f(i) \leq j \} \) and \( \{ i \in \int \mid f(i) \geq j \} \) are connected.
\end{lemma}

\begin{proof}
For \( i_1, i_2 \in \{ i \in \int \mid f(i) \leq j \} \) also their join lies in this set. The proof for the other poset is dual.
\end{proof}

\begin{corollary} \label{cor restriction for lattices}
Let \( f \colon P \to Q \) be a morphism of lattices. Then for any interval \( \int \) in \( P \) we have
\[ \Rk_{\int} \circ \operatorname{res}_f = \Rk_{\overline{f(\int)}}. \]
\end{corollary}

\begin{definition}
Let \( f \colon P \to Q \) be a poset homomorphism.

We call two collections of intervals \( \Int_P \) and \( \Int_Q \) in \( P \) and \( Q \) {\em compatible}, if
\begin{itemize}
\item for any \( \int_P \in \Int_P \) the convex hull of the image \( \overline{f(\int)_P} \) lies in \( \Int_Q \), and
\item for any \( \int_Q \in \Int_Q \) the preimage \( f^{-1}(\int_Q) \) lies in \( \Int_P \), or is empty.
\end{itemize}
\end{definition}

\begin{corollary} \label{corollary restriction commutes rank dec}
Let \( f \colon P \to Q \) be a lattice homomorphism, and let \( \Int_P \) and \( \Int_Q \) be compatible collections of intervals in \( P \) and \( Q \). Let $M$ be a representation of \( Q \)  admitting a rank decomposition $(\Rec, \Sec)$ over \( \Int_Q \), 

Then $(f^{-1}(\Rec), f^{-1}(\Sec))$ is a rank decomposition of \( \operatorname{res}_f  M \) over \( \Int_P \), where by definition $f^{-1}(\Rec) = \{ f^{-1}(R) \mid R\in\Rec, R \cap f(P) \neq \emptyset \}$ and similarly for \( \Sec \).
\end{corollary}

In the following examples we will consider intervals in \( \mathbb{R}^d \)  and their interactions with related posets. We will call an interval \( \int \) in \( \mathbb{R}^d \) \emph{half open} if
\begin{itemize}
\item \( \forall x \in \int\ \exists \varepsilon > 0 \colon \seg{x, x+ (\varepsilon, \cdots, \varepsilon)} \subseteq \int \), and
\item \( \forall x \not\in \int\ \exists \varepsilon > 0 \colon  \seg{x, x+ (\varepsilon, \cdots, \varepsilon)} \cap \int = \emptyset \).
\end{itemize}
This is equivalent to requiring that its intersection with any upward sloping line is a half-open interval on that line.

\begin{example} \label{ex upward line2}


Consider again the poset homomorphism $P\to Q$ with $P=\R$ and $Q=\R^d$ from Example~\ref{ex upward line}. Note that this is a lattice homomorphism.
Let \( \Int_P = \{ [a, b) \} \) be the collection of half-open intervals in \( \mathbb{R} \), and \( \Int_Q \) be the collection of half-open intervals in the sense above. These are compatible collections of intervals.
Therefore, for any \( M \in \Rep Q \) admitting a rank decompositions $(\Rec, \Sec)$ with elements in $\Int_Q$, its restriction to the line parametrized by \( f \) has an induced rank decomposition with elements in $\Int_P$.

The statement remains true if we restrict \( \Int_Q \) to being any smaller collection of half-open intervals which includes all ``half-open rectangles'' \( \{ [a_1, b_1) \times \cdots \times [a_d, b_d) \} \).
    We also get analogous results when considering \( \Int_P \) to be the closed (resp. open, all) intervals in \( \mathbb{R} \), and \( \Int_Q \) to be a collection of closed (resp. open, all) intervals containing the closed (resp. open, all) rectangles in \( \mathbb{R}^d \). 
\end{example}

\begin{example} \label{ex Kan as restriction}
For a collection of finite subsets \( F_i \subset \mathbb{R} \), consider the finite grid \( \prod_{i=1}^d F_i \subset \mathbb{R}^d \). As in Proposition~\ref{prop usual rank and lkan}, we want to consider left Kan extensions from the grid  to \( \mathbb{R}^d \). As in that proof, we add formal $-\infty$ coordinates, and we notice that left Kan extensions are realized as restrictions along  \( \lfloor \cdot \rfloor \).   

We now observe that \( \lfloor \cdot \rfloor \) is a lattice homomorphism: right adjoints automatically commute with meets, but for joins we use the fact that we are specifically considering grids.

Consider either of the following situations:
\begin{itemize}
\item \( \Int_{\mathbb{R}^d} \) consists of all half-open rectangles, and \( \Int_{\prod_{i=1}^d F_i } \) of all segments, or
\item \( \Int_{\mathbb{R}^d} \) consists of all half-open intervals, and \( \Int_{\prod_{i=1}^d F_i } \) of all intervals.
\end{itemize}
All of these have corresponding versions in the respective posets including ``\( - \infty \)''-s. Then they form compatible collections of intervals, along the map \( \lfloor \cdot \rfloor \).

It then follows that any rank decomposition of \( M \in \rep \prod_{i=1}^d F_i \subset \mathbb{R}^d \) induces a rank decomposition of its left Kan extension.
\end{example}

\begin{corollary}\label{cor:decomp_half-open2}
Let \( M \in \rep \mathbb{R}^d \) be finitely presented. Let \( \Int \) either be the collection of half-open rectangles or the collection of all half-open intervals. Then there are unique disjoint multi-sets \( \Rec \) and \( \Sec \) of elements of \( \Int \), such that
\[ \Rk_{\Int} M = \Rk_{\Int} \field_{\Rec} - \Rk_{\Int} \field_{\Sec}. \]
\end{corollary}

\begin{proof}
Since \( M \) is finitely presented it is the left Kan extension of a representation of a finite grid. (Consider the positions of all generators and relations of \( M \), and then form the smallest grid containing all these points.) For this finite grid, the statement holds by \newversion{Corollary}~\ref{cor:min_gen-rank_decomp}. As a final step, by the above example we know that rank decompositions are preserved by left Kan extensions.
\end{proof}

The following fact is an interesting additional consequence of the results of this section. It follows by comparing the proofs of Corollaries~\ref{cor:decomp_half-open} and~\ref{cor:decomp_half-open2}.
  \begin{corollary} \label{cor relate usual rank to half open}
Let \( M \in \rep \mathbb{R}^d \) be finitely presented. Let \( \Rec \) and \( \Sec \) be multi-sets of half-open rectangles. Then the following are equivalent:
\begin{enumerate}
\item \( ( \Rec, \Sec ) \) is a rank decomposition of \( \Rk_\Int M \) over the collection~$\Int$ of half-open rectangles;
\item the usual rank invariant of \( M \) is decomposed as \( \Rk M = \Rk \field_{\Rec} - \Rk \field_{\Sec} \).
\end{enumerate}
That is, we obtain the same decomposition over the half-open rectangles, independent of whether we consider the usual rank invariant or the generalized rank invariant. 
\end{corollary}

\begin{remark}
This corollary shows that considering decompositions of the usual rank into half-open rectangles, while a priori outside the formal framework of Definition~\ref{def:rank_decomp}, may be considered a rank decomposition as in Definition~\ref{def:rank_decomp} with respect to the class of half-open rectangles.
\end{remark}
}

\section{Rank-exact sequences and resolutions}
\label{sec:rank-exact}

\revisedversion{
  In this section we focus on the usual rank invariant and prove that the families of lower hooks and of upper hooks each yield a basis for its decomposition,
  both in the finite poset case (Theorem~\ref{thm:basis of K for finite}) and in the infinite upper semi-lattice case (Theorem~\ref{thm:upper semilattice}). The reason for considering these particular families of intervals is \newversion{that,}
  as explained in the introduction, these are the intervals that come up naturally when one tries to factor the rank invariant through projective resolutions. Our proof highlights this connection at the homological algebra level\newversion{.}

Our exposition is organized as follows. In Section~\ref{sec:exact_struct} we introduce the class of short rank-exact sequences, show that it defines the structure of an exact category on~$\rep \pos$, characterize its indecomposable projectives and injectives as being respectivel the lower hook modules and the upper hook modules, and introduce its associated Grothendieck group. Then, in Sections~\ref{sec:finite_posets} and~\ref{sec:upper_semi-lattices} we prove that finitely presented modules admit finite projective resolutions in the rank-exact category, both in the finite poset case and in the infinite upper semi-lattice case. From there we conclude on the existence and uniqueness of rank decompositions by interval modules supported on lower or upper hooks.  }

\subsection{\revisedversion{The rank-exact category}}
\label{sec:exact_struct}

Exact categories were introduced by Quillen \cite{Quillen_exact} (a similar definition was given by Heller \cite{Heller_exact}), with the aim of having a minimal categorical setup in which the standard methods of homological algebra of abelian categories can be applied. \newversion{See B{\"u}hler's survey~\cite{buhler2010exact} for a good introduction to the subject.}

More precisely, an exact category is an additive category together with a class of \newversion{{\em kernel-cokernel pairs}, i.e., pairs of morphisms $\xymatrix{M\ar^-{i}[r]&M'\ar^-{p}[r]&M''}$ such that $i$ is a kernel of~$p$ and $p$ is a cokernel of~$i$.}
    We call the first morphism \newversion{of such a pair}
    an admissible monic, and the second \newversion{morphism}
    an admissible epic. With this notation, we require that:
\begin{itemize}
\item[E0] \newversion{all identity morphisms are both admissible monics and admissible epics;}
\item[E1] compositions of admissible monics (epics) are admissible monics (epics) again;
\item[E2] push-outs of admissible monics along arbitrary maps exist and are admissible monics again; pull-backs of admissible epics along arbitrary maps exist and are admissible epics again.
\end{itemize}
\revisedversion{Such a class of \newversion{kernel-cokernel pairs}
  is called an \emph{exact structure}.}

\newversion{Abelian categories, such as for instance the category of persistence modules over a fixed poset, have a canonical exact structure given by all the pairs of morphisms appearing in their short exact sequences. However, other (weaker) exact structures may be considered as well, by restricting the focus to certain classes of short exact sequences---those are then called {\em distinguished} short exact sequences. This is what we do in this section.}

\bigskip

\revisedversion{
  For the first part of this section, $\pos$ is an arbitrary poset. We
  consider the following class of short exact sequences.}

  \begin{definition}
A short exact sequence \( 0 \to A \to B \to C \to 0 \) in \( \rep \pos \) will be called \emph{rank-exact} if \( \Rk B = \Rk A + \Rk C \). We denote the collection of rank-exact sequences by \( \Erank \).
  \end{definition}

\revisedversion{
It is not a priori clear that the rank-exact sequences form an exact structure. We will show this using the following construction.

\begin{proposition}[{\cite[Propositions~1.4 and 1.7]{DRSS}}] \label{proposition exact from proj}
Let \( \mathcal{X} \) be a class of objects in an abelian category. Consider the class of exact sequences
\[ \mathcal{E}_{\mathcal{X}} = \{ 0 \to A \to B \to C \to 0 \mid \forall X \in \mathcal{X} \colon \Hom(X, B) \to \Hom(X, C) \text{ epic} \}. \]
Then \( \mathcal{E}_{\mathcal{X}} \) defines an exact structure.
\end{proposition}

Conversely to this construction, for a given exact structure \( \mathcal{E} \) \newversion{on an abelian category}, one may consider all objects \( X \) such that \( \Hom(X, -) \) takes distinguished short exact sequences to short exact sequences of abelian groups. These objects are called \( \mathcal{E} \)-projective; \( \mathcal{E} \)-injective objects are defined dually, i.e., by replacing \( \Hom(X, -) \)  with \( \Hom(-, X) \).
\newversion{\begin{remark}\label{rem:proj from exact}
It follows immediately from the construction that all objects in \( \mathcal{X} \) are \( \mathcal{E}_{\mathcal{X}} \)-projective, \newversion{and therefore that finite direct sums of objects in \( \mathcal{X} \) are also \( \mathcal{E}_{\mathcal{X}} \)-projective}.
\end{remark}}
  
For our instance of rank-exact sequences the indicator representations supported on intervals of the following
forms will turn out to be the relevant representations.} Here we use the  convention that \( - \infty < i < \infty \) for any \( i \in \pos \).
\begin{align*}
\dhook{i, j} & = \{ \ell \in \pos \mid i \leq \ell \not\geq j \} && \text{for } i < j \in \pos \cup \{ \infty \} & \mbox{\em lower hook}\\
\uhook{i, j} & = \{ \ell \in \pos \mid i \not\geq \ell \leq j \} && \text{for } i < j \in \pos \cup \{ - \infty \} & \mbox{\em upper hook}
\end{align*}

The following lemma connects these indicator modules to ranks.

\begin{lemma} \label{lemma rank vs L-shape}
Let \( \seg{i,j} \) be a segment in \( \pos \).

\begin{enumerate}
\item There is a short exact sequence of functors 
\[ 0 \to \Hom( \field_{\dhook{i,j}}, -) \to (-)_i \to (-)_j, \]
\revisedversion{from $\rep \pos$ to $\vec_\field$}, where $M_i$ stands for $M(i)$. 
\item As functions on poset representations, we have
\[ \Rk_{\seg{i,j}} = \dim (-)_i - \dim \Hom(  \field_{\dhook{i,j}}, -). \]
\end{enumerate}
\end{lemma}

\begin{proof}
\revisedversion{For $i\in P$, let $P_i = \field_{\langle i, \infty\langle}$.} 
Note that \( (-)_i \cong \Hom( P_i, - ) \) and similarly for \( j \). Now the first claim follows from the exact sequence
\[ P_j \to P_i \to \field_{\dhook{i,j}} \to 0 \]
and the left exactness of the \( \Hom \)-functor.

For the second claim, observe that \( \Rk_{\seg{i,j}} \) is the dimension of the image of the map \( (-)_i \to (-)_j \). In particular the formula for the rank is obtained by taking dimensions of the terms in the short exact sequence
\[ 0 \to \Hom( \field_{\dhook{i,j}}, -) \to (-)_i \to \text{image} \to 0.  \]\qedhere
\end{proof}

\newversion{This result allows us to connect rank-exactness to \( \Hom \)s from hook modules.}

\begin{proposition} \label{prop rank exact by Hom}
For \( i < j \) in \( \pos \), and a short exact sequence
\[ \mathbb E \colon 0 \to A \to B \to C \to 0 \]
in \( \rep \pos \), the following are equivalent.
\begin{enumerate}
\item \( \Rk_{\seg{i,j}} B = \Rk_{\seg{i,j}} A + \Rk_{\seg{i,j}} C \);
\item \( \Hom( \field_{\dhook{i, j}}, \mathbb E ) \) is exact;
\item \( \Hom( \mathbb E, \field_{\uhook{i,j}} ) \) is exact.
\end{enumerate}
\end{proposition}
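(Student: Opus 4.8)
The plan is to prove the chain of equivalences by establishing $(1)\Leftrightarrow(2)$ and then $(1)\Leftrightarrow(3)$ by a dual argument, using Lemma~\ref{lemma rank vs L-shape} as the main tool. First I would recall from Lemma~\ref{lemma rank vs L-shape}(2) that, as a function on poset representations, $\Rk_{\seg{i,j}} = \dim(-)_i - \dim\Hom(\field_{\dhook{i,j}}, -)$. Since $(-)_i = \Hom(P_i,-)$ is exact (it is evaluation at a point, or equivalently $\Hom$ out of the projective $P_i$), additivity along $\mathbb E$ is automatic for the term $\dim(-)_i$: we always have $\dim B_i = \dim A_i + \dim C_i$. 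Therefore $\Rk_{\seg{i,j}}B = \Rk_{\seg{i,j}}A + \Rk_{\seg{i,j}}C$ holds if and only if $\dim\Hom(\field_{\dhook{i,j}},B) = \dim\Hom(\field_{\dhook{i,j}},A) + \dim\Hom(\field_{\dhook{i,j}},C)$.

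Next I would translate this dimension-additivity statement into exactness of $\Hom(\field_{\dhook{i,j}},\mathbb E)$. By left-exactness of $\Hom(\field_{\dhook{i,j}},-)$, applying it to $\mathbb E$ yields a left-exact sequence $0 \to \Hom(\field_{\dhook{i,j}},A) \to \Hom(\field_{\dhook{i,j}},B) \to \Hom(\field_{\dhook{i,j}},C)$, which extends on the right by $\Ext^1(\field_{\dhook{i,j}},A)$. This four-term sequence of finite-dimensional vector spaces has alternating-sum-of-dimensions equal to zero for the exact part, so $\dim\Hom(\field_{\dhook{i,j}},B) = \dim\Hom(\field_{\dhook{i,j}},A) + \dim\Hom(\field_{\dhook{i,j}},C)$ if and only if the map $\Hom(\field_{\dhook{i,j}},B) \to \Hom(\field_{\dhook{i,j}},C)$ is surjective, i.e. if and only if $\Hom(\field_{\dhook{i,j}},\mathbb E)$ is exact. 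Combining with the previous paragraph gives $(1)\Leftrightarrow(2)$. (One should note all relevant $\Hom$-spaces are finite-dimensional: $\field_{\dhook{i,j}}$ need not be pfd if $\pos$ is infinite, but $\Hom(\field_{\dhook{i,j}},M)$ embeds into $M_i$ for pfd $M$ via Lemma~\ref{lemma rank vs L-shape}(1), hence is finite-dimensional, so the dimension count is legitimate.)

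For $(1)\Leftrightarrow(3)$ I would run the dual argument, using the dual of Lemma~\ref{lemma rank vs L-shape}: the rank $\Rk_{\seg{i,j}}$ equals $\dim (-)_j^{\vee}$-type contributions, or more directly, observe that $\Rk_{\seg{i,j}}M = \dim M_j - \dim M_j/\Ima(M_i\to M_j)$, and that $M_j/\Ima(M_i\to M_j) \cong$ (the relevant cokernel) is expressible as $\Hom(\mathbb E, \field_{\uhook{i,j}})$-type data via the cogenerator/injective $\field_{\uhook{i,j}}$; concretely one uses the exact sequence $0 \to \field_{\uhook{i,j}} \to \overline{P}^j_j \to \overline{P}^i$ dualizing the one for $P_j \to P_i \to \field_{\dhook{i,j}} \to 0$, where $\overline{P}$ denotes the injective cogenerators. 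Then $\dim(-)_j$ is additive along $\mathbb E$ because evaluation is exact, and the same alternating-sum argument shows $(1)$ is equivalent to surjectivity of $\Hom(A,\field_{\uhook{i,j}}) \to \Hom(B,\field_{\uhook{i,j}})$ — wait, the direction reverses for contravariant $\Hom$ — equivalently to left-exactness extending to full exactness of $\Hom(\mathbb E,\field_{\uhook{i,j}})$. I expect the main obstacle to be bookkeeping the finite-dimensionality and the precise form of the injective $\field_{\uhook{i,j}}$ so that the dual of Lemma~\ref{lemma rank vs L-shape} genuinely applies; once that is in place the alternating-sum argument is routine and symmetric to the first half.
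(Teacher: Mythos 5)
Your proposal is correct and follows essentially the same route as the paper: apply Lemma~\ref{lemma rank vs L-shape}(2), use exactness of evaluation at $i$ to reduce (1) to additivity of $\dim\Hom(\field_{\dhook{i,j}},-)$ along $\mathbb E$, identify that with exactness of $\Hom(\field_{\dhook{i,j}},\mathbb E)$ via left-exactness of $\Hom$, and handle (3) by duality. Your write-up is in fact more detailed than the paper's (which leaves the dimension-count-versus-surjectivity step and the finite-dimensionality of the $\Hom$-spaces implicit), and both of those added checks are sound.
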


\begin{proof}
By Lemma~\ref{lemma rank vs L-shape}(2), we see that (1) is equivalent to
\[ \dim  \Hom(  \field_{\dhook{i,j}}, B) =  \dim \Hom(  \field_{\dhook{i,j}}, A) + \dim \Hom(  \field_{\dhook{i,j}}, C). \]
\revisedversion{By left-exactness of $\Hom(  \field_{\dhook{i,j}}, -)$,} this equation holds if and only if \( \Hom( \field_{\dhook{i, j}}, \mathbb E ) \) is exact. This shows that (1) is equivalent to (2).

The equivalence of (1) and (3) is dual.
\end{proof}

\revisedversion{We are now ready to show that \( \Erank \) indeed defines an exact structure.}

\begin{theorem} \label{thm is exact cat}
The collection \(\Erank \) of rank-exact sequences defines the structure of an exact category on \( \rep \pos \).
 \revisedversion{Moreover, \newversion{finite} direct sums of objects of the form \( \field_{\dhook{i, j}} \), where \( i < j \in P \cup \{ \infty \} \), are projective with respect to this exact structure. Dually, \newversion{finite} direct sums of objects \( \field_{\uhook{i, j}} \), with \( i < j \in P \cup \{ - \infty \} \), are \( \Erank \)-injective.}
\end{theorem}
\begin{proof}
  \revisedversion{In view of Proposition~\ref{prop rank exact by Hom}, we have \( \Erank = \mathcal{E}_{\{ \field_{\dhook{i, j}} \mid i < j \}} \) in the notation of Proposition~\ref{proposition exact from proj}. It now follows from that proposition that \( \Erank \) is an exact structure. The claim on \( \Erank \)-projectives follows by \newversion{Remark~\ref{rem:proj from exact}},
    and the corresponding statement for injectives is dual.}
\end{proof}

\begin{definition}\label{def:rel_Grothendieck_group}
Let \( \mathcal{E} \) be an exact category. The \emph{Grothendieck group} \( \K(\mathcal{E}) \) is the free abelian group on the objects of \( \mathcal{E} \), subject to the relation that
\( [B] = [A] + [C] \) whenever \( 0 \to A \to B \to C \to 0 \) is a distinguished short exact sequence in \( \mathcal{E}. \)
\end{definition}

\begin{remark}
Alternatively, the Grothendieck group may be characterized as a universal map from objects of \( \mathcal{E} \) to an abelian group which is additive on distinguished short exact sequences. 
\end{remark}
\revisedversion{
\begin{remark} Grothendieck groups are central to homological algebra, and so it is unsurprising that they have already appeared in TDA in various forms. E.g., Patel~\cite{patel2018generalized} uses the standard Grothendieck group \newversion{(i.e., the one associated with all the short exact sequences in the category)} to define generalized persistence diagrams for one-parameter persistence modules valued in abelian categories, and Asashiba et al.~\cite{asashiba2019approximation} use the split Grothendieck group \newversion{(i.e., the one associated  the split exact sequences)} to obtain interval approximations of persistence modules indexed by a two-parameter grid. As a result, they present the generalized persistence diagram of a module~$\Mod$ given in~\cite{kim2018generalized} as a rank-preserving `projection' of~$\Mod$ onto the split Grothendieck group. 
The resulting approximation is thus essentially a minimal rank decomposition of $\Mod$ with respect to the generalized rank invariant over all intervals. The split Grothendieck group is natural in this context as it allows for the formal addition of (equivalence classes of) modules. This paper is the first in TDA to consider Grothendieck groups for exact categories. 
\end{remark}
}

\begin{lemma}
Let \( \pos \) be a poset. Then \( \Rk \) defines a \revisedversion{homomorphism of abelian groups}  \( \K(\Erank) \to \mathbb{Z}^{\RelationAsSet{\pos}} \).
\end{lemma}

\begin{proof}
By construction, \( \Rk \) is additive on rank-exact sequences,  therefore it factors through the Grothendieck group.
\end{proof}

\subsection{Finite posets}
\label{sec:finite_posets}
\revisedversion{As remarked above, standard constructions from homological algebra can be applied in an exact category. We now recall the definition of the constructions we need in this section; it is therefore assumed that the underlying category is $\rep \pos$, \newversion{where $\pos$ is a finite poset}.  An exact category \( \mathcal{E} \) has \emph{enough projectives} if, for every object $A$ in $\mathcal{E}$, there exists an \( \mathcal{E} \)-projective object $P$ and an admissible epic $P\to A$. The notion of \emph{enough injectives} is defined dually. A sequence of morphisms
\[\cdots \rightarrow A_1\xrightarrow{f_1} A_2 \xrightarrow{f_2} A_3 \rightarrow \cdots\]
is \emph{\( \mathcal{E}\)-exact} if, for all $i$, ${\rm im}(f_{i-1}) \to A_i \to {\rm im}(f_i)$ is a distinguished short exact sequence (here we have used that $\rep \pos$ has kernels and cokernels; see \cite[Lemma 8.4]{buhler2010exact}). An \emph{$\mathcal{E}$-projective resolution} of $A$ is an $\mathcal{E}$-exact sequence 
\[\cdots P_2 \rightarrow P_1 \rightarrow P_0 \to M\]
where each $P_i$ is $\mathcal{E}$-projective. \emph{$\mathcal{E}$-Injective resolutions} are defined dually. 

Since the objects we consider in this section are finite dimensional representations of finite posets, it follows from \cite[Section 2.1]{auslander_reiten_smalo_1995} that if the exact structure has enough projectives, then it also has \emph{$\mathcal{E}$-projective covers} (\newversion{i.e.,} left minimal morphisms in \cite[Section 2.1]{auslander_reiten_smalo_1995}). From this, one constructs a \emph{minimal} $\mathcal{E}$-projective resolution by inductively computing  $\mathcal{E}$-projective covers for kernels, precisely as in the classical setting. Since $\mathcal{E}$-projective covers are unique up to isomorphism (\cite[Proposition 2.1]{auslander_reiten_smalo_1995}), so are minimal $\mathcal{E}$-projective resolutions. Furthermore, it follows from \cite[Theorem 2.2]{auslander_reiten_smalo_1995} that if $P_\bullet\to A$ is an $\mathcal{E}$-projective resolution of $A$, then $P_\bullet \cong P_\bullet' \oplus P_\bullet''$ where $P_\bullet'$ is a minimal $\mathcal{E}$-projective resolution of $A$. The case of minimal $\mathcal{E}$-injective resolutions is dual. 
} 

\begin{theorem} \label{thm:enough proj}
Let \( \pos \) be a finite poset. \revisedversion{Then \( \Erank \) has enough projectives, which are \newversion{finite} direct sums of lower hook modules, and enough injectives, which are \newversion{finite} direct sums of upper hook modules. Moreover, every object has a finite \( \Erank \)-projective and a finite \( \Erank \)-injective resolution.}
\end{theorem}
\begin{proof}
\revisedversion{Since \( \pos \) is finite, the category \( \rep \pos \) is equivalent to the module category of a finite dimensional algebra. In particular we can apply the results of \cite{Auslander-Solberg}. Proposition~1.10 of that paper gives the desired description of \( \Erank \)-projectives, and Theorem~1.12 shows that there are enough projectives.}

To see that any object has a finite \( \Erank \)-projective resolution, note that
\[ \Hom( \field_{\dhook{i, j}}, \field_{\dhook{m, n}}) \neq 0 \quad \Longrightarrow \quad i \geq m \text{ and } j \geq n \]
and \( \End( \field_{\dhook{i, j}}) = \field \). It follows that the indices appearing in a minimal \( \Erank \)-projective resolution increase from term to term, and thus we have a finite resolution since $\pos$ is finite.

\revisedversion{The claims for \( \Erank \)-injectives and \( \Erank \)-injective resolutions are dual.}
\end{proof}

\begin{corollary} \label{cor:L-shapes generate}
The \revisedversion{elements} \( [ \field_{\dhook{i, j}} ] \) generate \( \K(\Erank) \).
\end{corollary}

\begin{proof}
\revisedversion{For any \( \Mod \in \rep P \), consider the finite  \( \Erank \)-projective resolution
\[P_k\xrightarrow{p_k}\cdots P_2 \xrightarrow{p_2} P_1 \xrightarrow{p_1} P_0 \xrightarrow{p_0} \Mod.\]
By definition, 
\begin{align*} \Rk \Mod &= \Rk P_0 - \Rk {\rm im}(p_1) = \Rk P_0 - (\Rk P_1 - \Rk {\rm im}(p_2))\\
&= \Rk P_0 - \Rk P_1 + (\Rk P_2 - \Rk{\rm im}(p_3)) = \cdots = \sum_{n=0}^k (-1)^n \Rk P_n.
\end{align*}}
In \( \K(\Erank) \) this gives
\[ [\Mod] = \sum_{n=0}^{k} (-1)^n [P_n]. \]
\end{proof}

\begin{theorem} \label{thm:basis of K for finite}
Let \( \pos \) be a finite poset. Then
\[ \Rk \colon \K(\Erank) \to \mathbb{Z}^{\RelationAsSet{\pos}} \]
is an isomorphism.
Moreover, the following three sets are bases for \( \K(\Erank) \):
\begin{align*}
  &\{ [\field_{\seg{i,j}}] \mid i \leq j \in P \},\\[0.5em]
  &\{ [\field_{\dhook{i,j}} ] \mid i < j \in P \cup \{ \infty \} \}, \ \text{and} \\[0.5em]
  &\{ [\field_{\uhook{i,j}} \mid i < j \in P \cup \{ - \infty \} \}.
\end{align*}
\end{theorem}

\begin{proof}
By Theorem~\ref{th:basis}, the map \( \Rk \colon \K(\Erank) \to \mathbb{Z}^{\RelationAsSet{\pos}} \) is surjective. By Corollary~\ref{cor:L-shapes generate}, \( \K(\Erank) \) is generated by \( | \RelationAsSet{\pos} | \) elements. \revisedversion{For the map to be surjective, the image of a choice of \( | \RelationAsSet{\pos} | \) generators must be linearly independent.}  It follows that \( \Rk \) is an isomorphism, and moreover that these generators are linearly independent. In particular we have established the second basis for \( \K(\Erank) \). The final one is dual. The first one follows from Theorem~\ref{th:basis}, together with the fact that \( \Rk \) is an isomorphism.
\end{proof}

\revisedversion{
 Let us illustrate Theorem~\ref{thm:basis of K for finite} with a concrete example.
\begin{example}\label{ex:rel_Grothendieck_group_decomp}
  We revisit the example from Figure~\ref{fig:decomp_indec2_grid}.
  Note that we have the following rank-exact sequences, \newversion{where each representation is indecomposable}:
  \[ \resizebox{1\hsize}{!}{ $\displaystyle
 0\ \longrightarrow\ 
 \vcenter{ \xymatrix{ 0\ar[r] & 0\ar[r] & 0 \\ 0\ar[r]\ar[u] & k\ar^-{\id}[r]\ar[u] & k\ar[u] \\ 0\ar[r]\ar[u] & 0\ar[r]\ar[u] & 0\ar[u]  }}
  \ \longrightarrow\ 
  \vcenter{ \xymatrix{ k\ar^-{\id}[r] & k\ar[r] & 0 \\ k\ar^-{\left[\begin{smallmatrix}1\\0\end{smallmatrix}\right]}[r]\ar^-{\id}[u] & k^2\ar^-{\left[\begin{smallmatrix}1&1\end{smallmatrix}\right]}[r]\ar_-{\left[\begin{smallmatrix}1&0\end{smallmatrix}\right]}[u] & k\ar[u] \\ 0\ar[r]\ar[u] & 0\ar[r]\ar[u] & 0\ar[u] }}
  \ \oplus\ 
  \vcenter{\xymatrix{ 0\ar[r] & 0\ar[r] & 0 \\ 0\ar[r]\ar[u] & k\ar^-{\id}[r]\ar[u] & k\ar[u] \\ 0\ar[r]\ar[u] & k\ar^-{\id}[r]\ar_-{\id}[u] & k\ar_-{\id}[u]   }}
  \ \longrightarrow\ 
  \vcenter{\xymatrix{ k\ar^-{\id}[r] & k\ar[r] & 0 \\ k\ar^-{\left[\begin{smallmatrix}1\\0\end{smallmatrix}\right]}[r]\ar^-{\id}[u] & k^2\ar^-{\left[\begin{smallmatrix}1&1\end{smallmatrix}\right]}[r]\ar_-{\left[\begin{smallmatrix}1&0\end{smallmatrix}\right]}[u] & k\ar[u] \\ 0\ar[r]\ar[u] & k\ar^-{\id}[r]\ar_-{\left[\begin{smallmatrix}0\\1\end{smallmatrix}\right]}[u] & k\ar_-{\id}[u] }}
  \ \longrightarrow\ 0
 $}\]
and
\[ \resizebox{1\hsize}{!}{ $\displaystyle
  0\ \longrightarrow\ 
  \vcenter{\xymatrix{ k\ar^-{\id}[r] & k\ar[r] & 0 \\ k\ar^-{\left[\begin{smallmatrix}1\\0\end{smallmatrix}\right]}[r]\ar^-{\id}[u] & k^2\ar^-{\left[\begin{smallmatrix}1&1\end{smallmatrix}\right]}[r]\ar_-{\left[\begin{smallmatrix}1&0\end{smallmatrix}\right]}[u] & k\ar[u] \\ 0\ar[r]\ar[u] & 0\ar[r]\ar[u] & 0 \ar[u] }}
\ \longrightarrow\ 
  \vcenter{\xymatrix{ k\ar^-{\id}[r] & k\ar[r] & 0 \\ k\ar^-{\id}[r]\ar^-{\id}[u] & k\ar[r]\ar_-{\id}[u] & 0\ar[u] \\ 0\ar[r]\ar[u] & 0\ar[r]\ar[u] & 0\ar[u] }}
  \ \oplus\ 
  \vcenter{\xymatrix{ 0\ar[r] & 0\ar[r] & 0 \\ k\ar^-{\id}[r]\ar[u] & k\ar^-{\id}[r]\ar[u] & k\ar[u] \\ 0\ar[r]\ar[u] & 0\ar[r]\ar[u] & 0\ar[u]  }}
  \ \oplus\ 
    \vcenter{\xymatrix{ 0\ar[r] & 0\ar[r] & 0 \\ 0\ar[r]\ar[u] & k\ar[r]\ar[u] & 0\ar[u] \\ 0\ar[r]\ar[u] & 0\ar[r]\ar[u] & 0\ar[u]  }}
    \ \longrightarrow\ 
      \vcenter{\xymatrix{ 0\ar[r] & 0\ar[r] & 0 \\ k\ar^-{\id}[r]\ar[u] & k\ar[r]\ar[u] & 0\ar[u] \\ 0\ar[r]\ar[u] & 0\ar[r]\ar[u] & 0\ar[u]  }}
      \longrightarrow\ 0
      $}\]
\newversion{In the Grothendieck group~$\K(\Erank^{\rm fp})$, these rank-exact sequences yield the following equations:
\[
\resizebox{1\hsize}{!}{ $\displaystyle
  \left[\vcenter{\xymatrix{ k\ar^-{\id}[r] & k\ar[r] & 0 \\ k\ar^-{\left[\begin{smallmatrix}1\\0\end{smallmatrix}\right]}[r]\ar^-{\id}[u] & k^2\ar^-{\left[\begin{smallmatrix}1&1\end{smallmatrix}\right]}[r]\ar_-{\left[\begin{smallmatrix}1&0\end{smallmatrix}\right]}[u] & k\ar[u] \\ 0\ar[r]\ar[u] & 0\ar[r]\ar[u] & 0\ar[u] } }\right]
+
\left[   \vcenter{\xymatrix{ 0\ar[r] & 0\ar[r] & 0 \\ 0\ar[r]\ar[u] & k\ar^-{\id}[r]\ar[u] & k\ar[u] \\ 0\ar[r]\ar[u] & k\ar^-{\id}[r]\ar_-{\id}[u] & k\ar_-{\id}[u]   }}\right]
=
\left[  \vcenter{ \xymatrix{ 0\ar[r] & 0\ar[r] & 0 \\ 0\ar[r]\ar[u] & k\ar^-{\id}[r]\ar[u] & k\ar[u] \\ 0\ar[r]\ar[u] & 0\ar[r]\ar[u] & 0\ar[u]  }} \right]
+
\left[\vcenter{\xymatrix{ k\ar^-{\id}[r] & k\ar[r] & 0 \\ k\ar^-{\left[\begin{smallmatrix}1\\0\end{smallmatrix}\right]}[r]\ar^-{\id}[u] & k^2\ar^-{\left[\begin{smallmatrix}1&1\end{smallmatrix}\right]}[r]\ar_-{\left[\begin{smallmatrix}1&0\end{smallmatrix}\right]}[u] & k\ar[u] \\ 0\ar[r]\ar[u] & k\ar^-{\id}[r]\ar_-{\left[\begin{smallmatrix}0\\1\end{smallmatrix}\right]}[u] & k\ar_-{\id}[u] }}\right]
$}
\]
and
\[
\resizebox{1\hsize}{!}{ $\displaystyle
  \left[\vcenter{\xymatrix{ k\ar^-{\id}[r] & k\ar[r] & 0 \\ k\ar^-{\id}[r]\ar^-{\id}[u] & k\ar[r]\ar_-{\id}[u] & 0\ar[u] \\ 0\ar[r]\ar[u] & 0\ar[r]\ar[u] & 0\ar[u] }}\right]
+
  \left[\vcenter{\xymatrix{ 0\ar[r] & 0\ar[r] & 0 \\ k\ar^-{\id}[r]\ar[u] & k\ar^-{\id}[r]\ar[u] & k\ar[u] \\ 0\ar[r]\ar[u] & 0\ar[r]\ar[u] & 0\ar[u]  }}\right]
+
\left[\vcenter{\xymatrix{ 0\ar[r] & 0\ar[r] & 0 \\ 0\ar[r]\ar[u] & k\ar[r]\ar[u] & 0\ar[u] \\ 0\ar[r]\ar[u] & 0\ar[r]\ar[u] & 0\ar[u]  }}\right]
=
  \left[\vcenter{\xymatrix{ k\ar^-{\id}[r] & k\ar[r] & 0 \\ k\ar^-{\left[\begin{smallmatrix}1\\0\end{smallmatrix}\right]}[r]\ar^-{\id}[u] & k^2\ar^-{\left[\begin{smallmatrix}1&1\end{smallmatrix}\right]}[r]\ar_-{\left[\begin{smallmatrix}1&0\end{smallmatrix}\right]}[u] & k\ar[u] \\ 0\ar[r]\ar[u] & 0\ar[r]\ar[u] & 0 \ar[u] }}\right]
+
\left[\vcenter{\xymatrix{ 0\ar[r] & 0\ar[r] & 0 \\ k\ar^-{\id}[r]\ar[u] & k\ar[r]\ar[u] & 0\ar[u] \\ 0\ar[r]\ar[u] & 0\ar[r]\ar[u] & 0\ar[u]  }}\right]
$}
\]
Hence:}
\begin{align*} \resizebox{0.245\hsize}{!}{ $\displaystyle
  \left[\vcenter{\xymatrix{ k\ar^-{\id}[r] & k\ar[r] & 0 \\ k\ar^-{\left[\begin{smallmatrix}1\\0\end{smallmatrix}\right]}[r]\ar^-{\id}[u] & k^2\ar^-{\left[\begin{smallmatrix}1&1\end{smallmatrix}\right]}[r]\ar_-{\left[\begin{smallmatrix}1&0\end{smallmatrix}\right]}[u] & k\ar[u] \\ 0\ar[r]\ar[u] & k\ar^-{\id}[r]\ar_-{\left[\begin{smallmatrix}0\\1\end{smallmatrix}\right]}[u] & k\ar_-{\id}[u] }}\right]$}
 & = \resizebox{0.7\hsize}{!}{ $\displaystyle
    \left[\vcenter{\xymatrix{ k\ar^-{\id}[r] & k\ar[r] & 0 \\ k\ar^-{\left[\begin{smallmatrix}1\\0\end{smallmatrix}\right]}[r]\ar^-{\id}[u] & k^2\ar^-{\left[\begin{smallmatrix}1&1\end{smallmatrix}\right]}[r]\ar_-{\left[\begin{smallmatrix}1&0\end{smallmatrix}\right]}[u] & k\ar[u] \\ 0\ar[r]\ar[u] & 0\ar[r]\ar[u] & 0\ar[u] } }\right]
    + \left[   \vcenter{\xymatrix{ 0\ar[r] & 0\ar[r] & 0 \\ 0\ar[r]\ar[u] & k\ar^-{\id}[r]\ar[u] & k\ar[u] \\ 0\ar[r]\ar[u] & k\ar^-{\id}[r]\ar_-{\id}[u] & k\ar_-{\id}[u]   }}\right]
    - \left[  \vcenter{ \xymatrix{ 0\ar[r] & 0\ar[r] & 0 \\ 0\ar[r]\ar[u] & k\ar^-{\id}[r]\ar[u] & k\ar[u] \\ 0\ar[r]\ar[u] & 0\ar[r]\ar[u] & 0\ar[u]  }} \right]
    $}\\[1ex]
  &= \resizebox{0.7\hsize}{!}{ $\displaystyle
  \left[\vcenter{\xymatrix{ k\ar^-{\id}[r] & k\ar[r] & 0 \\ k\ar^-{\id}[r]\ar^-{\id}[u] & k\ar[r]\ar_-{\id}[u] & 0\ar[u] \\ 0\ar[r]\ar[u] & 0\ar[r]\ar[u] & 0\ar[u] }}\right]
  + \left[\vcenter{\xymatrix{ 0\ar[r] & 0\ar[r] & 0 \\ k\ar^-{\id}[r]\ar[u] & k\ar^-{\id}[r]\ar[u] & k\ar[u] \\ 0\ar[r]\ar[u] & 0\ar[r]\ar[u] & 0\ar[u]  }}\right]
  + \left[\vcenter{\xymatrix{ 0\ar[r] & 0\ar[r] & 0 \\ 0\ar[r]\ar[u] & k\ar[r]\ar[u] & 0\ar[u] \\ 0\ar[r]\ar[u] & 0\ar[r]\ar[u] & 0\ar[u]  }}\right]
    $}\\[1ex]
  & \hspace{13pt} \resizebox{0.7\hsize}{!}{ $\displaystyle
    - \left[\vcenter{\xymatrix{ 0\ar[r] & 0\ar[r] & 0 \\ k\ar^-{\id}[r]\ar[u] & k\ar[r]\ar[u] & 0\ar[u] \\ 0\ar[r]\ar[u] & 0\ar[r]\ar[u] & 0\ar[u]  }}\right]
    + \left[   \vcenter{\xymatrix{ 0\ar[r] & 0\ar[r] & 0 \\ 0\ar[r]\ar[u] & k\ar^-{\id}[r]\ar[u] & k\ar[u] \\ 0\ar[r]\ar[u] & k\ar^-{\id}[r]\ar_-{\id}[u] & k\ar_-{\id}[u]   }}\right]
    - \left[  \vcenter{ \xymatrix{ 0\ar[r] & 0\ar[r] & 0 \\ 0\ar[r]\ar[u] & k\ar^-{\id}[r]\ar[u] & k\ar[u] \\ 0\ar[r]\ar[u] & 0\ar[r]\ar[u] & 0\ar[u]  }} \right]
    $},
  \end{align*}
\newversion{which, via the group homomorphism $\Rk \colon \K(\Erank) \to \mathbb{Z}^{\RelationAsSet{\pos}}$, recovers the minimal rank decomposition of Figure~\ref{fig:decomp_indec2_grid}.}
\end{example}
}

\subsection{Finitely presented representations of upper semi-lattices}
\label{sec:upper_semi-lattices}

Let \( \pos \) be an upper semi-lattice---meaning that any finite set of elements of \( P \) has a join, i.e., a least upper bound. For any finite subposet \( \subpos \) that is closed under taking joins, we define the operator $\lfloor \cdot \rfloor_\subpos$ taking any element \( p \in \pos \) for which the set \( \{ s \in \subpos \mid s \leq p \} \) is non-empty to the (unique) maximum element $\lfloor p \rfloor_\subpos = \max\{ s \in \subpos \mid s \leq p\}$.

\revisedversion{The left Kan extension of a persistence module along a poset homomorphism is given by a colimit computation~\cite[Theorem 6.2.1]{riehl2017category}. In our situation, this reduces to }
\[ \lkan\Mod(p) = \begin{cases} \Mod(\lfloor p \rfloor_{\subpos}) & \text{if } \revisedversion{\lfloor p \rfloor_{Q}} \text{ exists}\\ 0 & \text{otherwise} \end{cases} \]
\revisedversion{and for $p\leq p'$, the map $\lkan\Mod(p) \to  \lkan\Mod(p')$ is either trivial or equal to $\Mod(\lfloor p \rfloor_{\subpos}) \to \Mod(\lfloor p' \rfloor_{\subpos})$ if both $\revisedversion{\lfloor p \rfloor_{Q}}$ and $\revisedversion{\lfloor p' \rfloor_{Q}}$ exist.}

This description of left Kan extensions has the following immediate consequence.

\begin{lemma}\label{lem:Lanpresreseq}
In the situation described above, left Kan extensions preserve rank-exact sequences.
\end{lemma}

\begin{proof}
This follows immediately from the description of the left Kan extensions.
\end{proof}

\begin{lemma}\label{lem:morpos}
Let \( f\colon S \to \pos \) be any morphism of posets. Then
\[ \lkan \field_{\dhook{s,t}} = \field_{\dhook{f(s), f(t)}} \]
for any \( s < t \in S \cup \{ \infty \} \).
\end{lemma}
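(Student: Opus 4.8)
I would exploit that the left Kan extension $\lkan = \mathrm{Lan}_f \colon \Rep S \to \Rep \pos$ is left adjoint to the restriction functor $f^* \colon \Rep \pos \to \Rep S$, $N \mapsto N \circ f$, and is therefore right exact. The argument then rests on two observations: (i) $\field_{\dhook{s,\infty}}$ is exactly the representable (projective) object $P_s$ at $s$, characterized by $\Hom(P_s, -) = (-)_s$; and (ii) $\lkan P_s^S \cong P_{f(s)}^\pos$, naturally in $s$ (superscripts indicating the ambient poset). Granting these, the plan is to apply $\lkan$ to a projective presentation of $\field_{\dhook{s,t}}$ and read off the resulting cokernel pointwise.

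\textbf{Steps.} First I would dispose of the case $t = \infty$: then $\field_{\dhook{s,\infty}} = P_s^S$, so the claim is precisely (ii), under the convention $f(\infty) = \infty$ (so that $P_{f(s)}^\pos = \field_{\dhook{f(s),\infty}}$). For (ii) itself, combine the adjunction with the Yoneda-type isomorphism $\Hom(P_x, M) \cong M(x)$: $\Hom_{\Rep\pos}(\lkan P_s^S, N) \cong \Hom_{\Rep S}(P_s^S, f^*N) \cong (f^*N)(s) = N(f(s)) \cong \Hom_{\Rep\pos}(P_{f(s)}^\pos, N)$, natural in $N$. Next, for $t \in S$, recall from the proof of Lemma~\ref{lemma rank vs L-shape} (applied over $S$) the projective presentation $P_t^S \to P_s^S \to \field_{\dhook{s,t}} \to 0$, whose left map is induced by $s \le t$ (note $s < t$ forces $s \le t$, whence $f(s) \le f(t)$ by monotonicity of $f$). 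Applying the right exact functor $\lkan$ and invoking (ii) yields an exact sequence $P_{f(t)}^\pos \to P_{f(s)}^\pos \to \lkan \field_{\dhook{s,t}} \to 0$ in $\Rep\pos$ in which, by naturality of (ii) in $s$, the left map is the canonical one induced by $f(s) \le f(t)$. Finally, I would compute this cokernel pointwise: at $p \in \pos$ the map $P_{f(t)}^\pos(p) \to P_{f(s)}^\pos(p)$ is an isomorphism $\field \to \field$ when $p \ge f(t)$, the zero map $0 \to \field$ when $p \ge f(s)$ but $p \not\ge f(t)$, and $0 \to 0$ otherwise; hence its cokernel is $\field$ supported exactly on $\dhook{f(s),f(t)}$, with structure maps matching, i.e.\ $\lkan \field_{\dhook{s,t}} \cong \field_{\dhook{f(s),f(t)}}$. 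This also covers the degenerate case $f(s) = f(t)$, where both sides vanish.

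\textbf{Main point to check.} The only step that is not immediate is the naturality in $s$ of (ii), i.e.\ that $\lkan$ sends the canonical map $P_t^S \to P_s^S$ to the canonical map $P_{f(t)}^\pos \to P_{f(s)}^\pos$, and not to some scalar multiple of it (possibly $0$, which would make the cokernel wrong); this is the standard fact that left Kan extension along $f$ intertwines the co-Yoneda embeddings $s \mapsto P_s^S$ and $x \mapsto P_x^\pos$, provable by a short diagram chase with the unit of $\lkan \dashv f^*$. If one prefers to avoid this bookkeeping, the lemma can alternatively be proved by a direct pointwise computation: by the pointwise formula $\lkan M(p) = \varinjlim_{\{u \in S \mid f(u) \le p\}} M(u)$, the diagram computing $\lkan\field_{\dhook{s,t}}(p)$ is the indicator of $\dhook{s,t} \cap \{u \in S \mid f(u) \le p\}$, and a short case analysis on whether $f(s) \le p$ and whether $f(t) \le p$ shows this colimit equals $\field$ exactly when $p \in \dhook{f(s),f(t)}$ (the colimit is a quotient of $\field\cdot 1_s$ once $f(s)\le p$, and it collapses to $0$ as soon as $f(t)\le p$ because then $1_s \sim \field_{\dhook{s,t}}(s\le t)(1_s) = 0$), with the structure maps matching. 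The case $t = \infty$ is the subcase where $f(t) \le p$ never holds.
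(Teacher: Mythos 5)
Your proof is correct and follows essentially the same route as the paper's: the paper also argues that $\lkan$ is right exact and sends the projectives $\field_{\dhook{s,\infty}}=P_s$ to $P_{f(s)}$, then applies this to the projective presentation $\field_{\dhook{t,\infty}}\to\field_{\dhook{s,\infty}}\to\field_{\dhook{s,t}}\to 0$. You merely spell out the details (the adjunction/Yoneda identification of $\lkan P_s$, the naturality point, and the pointwise cokernel computation) that the paper leaves implicit.
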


\begin{proof}
\revisedversion{First, we recall the following general fact: let $F$ and $G$ be functors between abelian categories and assume that $F$ is left adjoint to $G$. If $G$ is exact, then $F(M)$ is projective (with respect to the standard exact structure) if $M$ is projective. To see this, observe that $\Hom(F(M),-) \cong \Hom(M,G(-))$ from the adjoint property, and that the right hand side is a composition of two exact functors. We conclude that $\Hom(F(M),-) $ is exact, too.

The left Kan extension is right exact since it is left adjoint to the restriction functor (precomposition by $f$) . Furthermore, the restriction functor is exact, and therefore the left Kan extension sends projectives to the corresponding projectives.} Now the claim follows from the fact that we have a projective presentation \[ \field_{\dhook{t, \infty}} \to  \field_{\dhook{s, \infty}}  \to \field_{\dhook{s, t}} \to 0.\] 
\end{proof}

\begin{proposition}\label{prop:finitely-pres_finite-resols}
Let \( \pos \) be an upper semi-lattice. \revisedversion{Then finitely presented representations, together with rank-exact sequences, form an exact category with enough projectives and injectives. The projectives and injectives in this exact category are precisely upper and lower hooks, respectively. Moreover, any object has a finite projective and a finite injective resolution.}
\end{proposition}

\begin{proof}
Let \( \Mod \) be a finitely presented \revisedversion{with respect to the standard exact structure}. Let \( \subpos \) be the join-closure of all the indices appearing in a projective presentation of \( \Mod \). Then it \revisedversion{follows from the formula given for left Kan extensions above that }
 \( \Mod = \lkan \Mod_0 \), for some representation \( \Mod_0\) of \( \subpos \).

By Theorem~\ref{thm:enough proj}, \( \Mod_0 \) has a finite rank-projective resolution. By Lemmas~\ref{lem:Lanpresreseq} and~\ref{lem:morpos}, the left Kan extension of this rank-projective resolution will be a new rank-projective resolution. \revisedversion{In particular all terms appearing in this projective resolution are sums of upper hook representations. It follows that any projective is a summand of a sum of upper hook representations, thus itself a sum of upper hook representations.}
\end{proof}

\revisedversion{Let \( \Erank^{\rm fp} \) denote the exact structure induced by rank-exact sequences on finitely presented representations.} The exact same argument as for Corollary~\ref{cor:L-shapes generate} gives the following.

\begin{corollary}
The \revisedversion{elements} \( [ \field_{\dhook{i, j}} ] \) generate \( \K(\Erank^{\rm fp}) \).
\end{corollary}

\begin{theorem} \label{thm:upper semilattice}
  Let \( \pos \) be an upper semi-lattice. Then, \revisedversion{\( \K(\Erank^{\rm fp}) \) is free}, and the set \( \{ [\field_{\dhook{i, j}}] \mid i < j \in P \cup \{ \infty \} \} \) is a basis of \( \K(\Erank^{\rm fp}) \), and furthermore, the map
\[ \Rk \colon \K(\Erank^{\rm fp}) \to \mathbb{Z}^{\RelationAsSet{\pos}} \]
is injective.
\end{theorem}

\begin{proof}
We already know that the family \( \{ [\field_{\dhook{i, j}}] \mid i < j \in P \cup \{ \infty \} \} \) generates \( \K(\Erank^{\rm fp}) \). Assume that these \revisedversion{elements} are not linearly independent. That means that there is a finite subset which is not linearly independent. However, by Theorem~\ref{thm:basis of K for finite}, for any finite set the ranks will be linearly independent.
It follows that the generating family is in fact a basis, and that \( \Rk \) is injective.
\end{proof}

\section{Application to multi-parameter persistence}
\label{sec:rank-decomp_pers}

In multi-parameter persistence, the poset~$\pos$ under consideration is either $\R^d$, viewed as a product of $d$ copies of the totally ordered real line, or a subposet of~$\R^d$---usually $\Z^d$ or some finite grid $\prod_{i=1}^d \integint{n_i}$. \revisedversion{In this setting the segments are  rectangles, i.e., products of 1-d intervals.}

\subsection{The finite grid case}
\label{sec:rank-decomp_multi-d_grid}

In this case, Corollary~\ref{cor:min_gen-rank_decomp} reformulates as follows:
\begin{corollary}\label{cor:gen_R-S_multi-param}
  Given an arbitrary collection $\Int$ of intervals in  a finite grid $\grid = \prod_{i=1}^d \libr 1, n_i\ribr \subset \R^d$, the generalized rank invariant~$\Rk_\Int \Mod$ of any pfd persistence module $\Mod$ indexed over~$\grid$  admits a unique minimal rank decomposition $(\Rec, \Sec)$  over~$\Int$.
\end{corollary}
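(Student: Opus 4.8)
The plan is simply to specialize Corollary~\ref{cor:min_gen-rank_decomp}, so the only work is to verify its hypotheses in the present situation. First I would observe that a finite grid $\grid = \prod_{i=1}^d \libr 1, n_i \ribr$ has only finitely many subsets, hence only finitely many intervals; consequently any collection $\Int$ of intervals in $\grid$ is finite, and in particular \emph{locally finite} in the sense of Section~\ref{sec:rk-decomp_downward-finite} (between any two comparable intervals in $\Int$ there can be only finitely many intervals of $\Int$). For the same reason, any function $r \colon \Int \to \Z$ automatically has finite, hence locally finite, support.

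Second, I would check that $\Rk_\Int \Mod$ is a legitimate input for that corollary, i.e. an honest map $\Int \to \Z$. Since $\Mod$ is pfd and $\grid$ is finite, each restriction $\Mod|_\int$ has finite-dimensional limit and colimit, so the morphism $\varprojlim \Mod|_\int \to \varinjlim \Mod|_\int$ has finite rank for every interval $\int \in \Int$; this is exactly Remark~\ref{rem:pfd=>finite-rank}. Hence $\Mod \in \rep_\Int \grid$ and $\Rk_\Int \Mod$ takes values in $\N \subseteq \Z$.

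Third, I would apply Corollary~\ref{cor:min_gen-rank_decomp} with $r = \Rk_\Int \Mod$. It produces a unique pair $(\Rec, \Sec)$ of disjoint multi-sets of elements of $\Int$ with $\field_\Rec, \field_\Sec \in \rep_\Int \grid$ satisfying $\Rk_\Int \Mod = \Rk_\Int \field_\Rec - \Rk_\Int \field_\Sec$. By Definition~\ref{def:rank_decomp} such a pair is precisely a rank decomposition of $\Rk_\Int \Mod$ over $\Int$, and the disjointness of $\Rec$ and $\Sec$ is exactly the minimality condition; uniqueness is part of the conclusion of Corollary~\ref{cor:min_gen-rank_decomp} (and also follows from Corollary~\ref{cor:min-decomp_exists_unique}).

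I do not expect a genuine obstacle here: the entire content is the reduction ``$\grid$ finite $\Rightarrow$ $\Int$ and $\supp(\Rk_\Int \Mod)$ locally finite'' together with ``$\Mod$ pfd $\Rightarrow$ $\Rk_\Int \Mod$ finite-valued'', after which the abstract corollary applies verbatim. The only mild point of care is to keep the two notions of interval separate---intervals of the poset $\grid$ on the one hand, and the poset $(\Int, \subseteq)$ on the other---but since $\grid$ is finite, all posets in sight are finite and every needed local-finiteness statement is automatic.
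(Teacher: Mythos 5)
Your proposal is correct and follows exactly the route the paper intends: the paper introduces this corollary with the phrase ``Corollary~\ref{cor:min_gen-rank_decomp} reformulates as follows'' and gives no further argument, so the entire content is precisely the verification you carry out (finiteness of $\grid$ gives local finiteness of $\Int$ and of the support of any $r\colon\Int\to\Z$, and pointwise finite-dimensionality gives finiteness of $\Rk_\Int\Mod$ via Remark~\ref{rem:pfd=>finite-rank}). No gaps.
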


Taking $\Int$ to be the collection of all closed rectangles in the grid~$\grid$ yields the following reformulation of Corollary~\ref{cor:min_rank_decomp}:
\begin{corollary}\label{cor:R-S_multi-param}
The usual rank invariant of any pfd persistence module $\Mod$ indexed over a finite grid $\grid = \prod_{i=1}^d \libr 1, n_i\ribr \subset \R^d$  admits a unique minimal rank decomposition $(\Rec, \Sec)$, where $\Rec$ and $\Sec$ are finite multi-sets of (closed) rectangles in $\grid$.  
\end{corollary}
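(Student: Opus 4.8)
The plan is to deduce this statement directly from Corollary~\ref{cor:min_rank_decomp}; the only work is to check that its hypotheses are met and to recast its conclusion in terms of rectangles. First I would note that $\grid = \prod_{i=1}^d \libr 1, n_i\ribr$ is a finite poset, so Corollary~\ref{cor:min_rank_decomp} applies with $\pos = \grid$. Since $\Mod$ is pointwise finite-dimensional, for every $s \le t \in \grid$ we have $\Rk\Mod(s,t) = \rank[\Mod(s)\to\Mod(t)] \le \dim\Mod(s) < \infty$, so the usual rank invariant is a genuine map $\Rk\Mod\colon\RelationAsSet{\grid}\to\Z$. Applying Corollary~\ref{cor:min_rank_decomp} to $r = \Rk\Mod$ yields a unique disjoint pair $(\Rec,\Sec)$ of finite multi-sets of closed segments of $\grid$ with $\Rk\Mod = \Rk\field_\Rec - \Rk\field_\Sec$; by Definition~\ref{def:rank_decomp} such a disjoint pair is precisely the (unique) minimal rank decomposition, and $\field_\Rec,\field_\Sec\in\rep\grid\subseteq\rep_\Int\grid$ automatically because $\Rec,\Sec$ are finite and $\grid$ is finite, so there is no extra finiteness condition to verify.

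It then remains only to translate ``closed segment of $\grid$'' into ``closed rectangle in $\grid$''. For $i \le j\in\grid$ the segment $\seg{i,j} = \{u\in\grid\mid i\le u\le j\}$ is exactly the product $\prod_{k=1}^d \libr i_k, j_k\ribr$ of one-dimensional discrete intervals, which is what is meant by a (closed) rectangle in $\grid$; the assignment $\seg{i,j}\mapsto \prod_{k=1}^d\libr i_k,j_k\ribr$ is a bijection between closed segments and closed rectangles of $\grid$. Transporting the decomposition of Corollary~\ref{cor:min_rank_decomp} along this bijection gives the claimed unique minimal rank decomposition of $\Rk\Mod$ over rectangles of $\grid$, and this is consistent with the identification $\Rk = \Rk_\Int$ of Remark~\ref{rem:usual_rank-inv} for $\Int = \{\seg{i,j}\mid i\le j\in\grid\}$.

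I do not expect any real obstacle here: the entire content is already packaged in Corollary~\ref{cor:min_rank_decomp} (which in turn rests on Theorem~\ref{th:basis} and, ultimately, on Proposition~\ref{prop:rank_count}), and the only point requiring a moment's thought is the harmless identification, in a product of finite totally ordered chains, of order-theoretic segments with combinatorial rectangles.
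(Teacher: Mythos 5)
Your proposal is correct and matches the paper's own treatment: the paper states this corollary as a direct reformulation of Corollary~\ref{cor:min_rank_decomp} obtained by taking $\Int$ to be the closed rectangles of the grid, which in a product of finite chains coincide with the closed segments. Your added checks (finiteness of the rank values for a pfd module, and the segment--rectangle identification) are exactly the routine details the paper leaves implicit.
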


\begin{figure}[b]
  \centering
  \includegraphics[width=\textwidth]{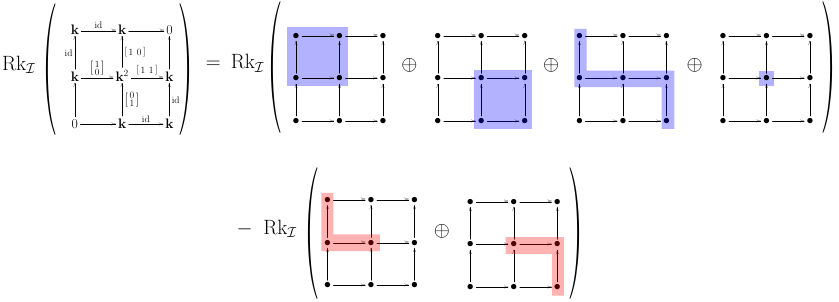}
  \caption{Minimal rank decomposition of the generalized rank invariant of the  module~$\Mod$ from Figure~\ref{fig:decomp_indec2_int_grid} over the full collection~$\Int$ of intervals in the $3\times 3$ grid. Blue is for intervals in~$\Rec$, while red is for intervals in~$\Sec$.
  }
  \label{fig:decomp_indec2_fullint_grid}
\end{figure}

\begin{figure}[tb]
  \centering
  \includegraphics[width=\textwidth]{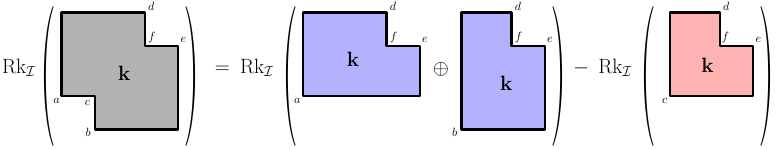}
  \caption{Taking $\Int$ to be the collection of all intervals with one generator and at most two cogenerators (which includes in particular all rectangles), the generalized rank invariant of the interval 2-parameter persistence module~$\field_\int$ on the left-hand side decomposes minimally as the difference between the generalized rank invariants of the two modules on the right-hand side. Blue is for intervals in $\Rec$ while red is for intervals in $\Sec$.} 
  \label{fig:decomp_2-2_hooks}
\end{figure}

\begin{figure}[tb]
  \centering
  \includegraphics[width=\textwidth]{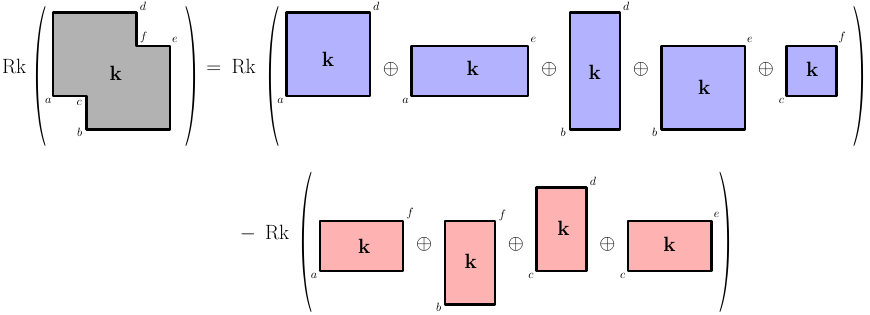}
  \caption{The usual rank invariant of the interval module~$\field_\int$ on the left-hand side decomposes minimally as the difference between the usual rank invariants of the two rectangle-decomposable modules on the right-hand side. Blue is for rectangles in $\Rec$ while red is for rectangles in $\Sec$. 
  }
  \label{fig:decomp_2-2}
\end{figure}

Figures~\ref{fig:decomp_indec2_fullint_grid} and~\ref{fig:decomp_2-2_hooks} illustrate Corollary~\ref{cor:gen_R-S_multi-param}, while Figures~\ref{fig:decomp_indec2_grid} and~\ref{fig:decomp_2-2} illustrate  Corollary~\ref{cor:R-S_multi-param}.

To  compute the minimal rank decompositions, we can apply the formula of Corollary~\ref{cor:alpha_formula}, which reformulates as follows
in the case of the usual rank invariant of a pfd persistence module~$\Mod$ indexed over a finite grid $\grid = \prod_{i=1}^d \libr 1, n_i\ribr \subset \R^d$: $\forall s\leq t\in \grid$,
\begin{equation}\label{eq:incl_excl_rect}
  \alpha_{\seg{s,t}} = \sum_{\begin{smallmatrix}s'\leq s\\\|s'-s\|_\infty \leq 1\end{smallmatrix}} \sum_{\begin{smallmatrix}t'\geq t\\\|t'-t\|_\infty \leq 1\end{smallmatrix}} (-1)^{\|s'-s\|_1 + \|t'-t\|_1}\,\Rk \Mod (s',t'). 
\end{equation}
The case $d=1$ gives the well-known inclusion-exclusion formula relating the persistence diagram of a one-parameter persistence module to its rank invariant~\cite{cohen2007stability}. The case $d=2$ gives the inclusion-exclusion formula for computing the multiplicities of summands in rectangle-decomposable 2-parameter persistence modules~\cite{botnan_et_al:LIPIcs:2020:12180}.

Figure~\ref{fig:decomp_2-2} illustrates~\eqref{eq:incl_excl_rect} on an interval module~$\field_\int$ in a finite 2-d grid. Indeed, on such modules, \eqref{eq:incl_excl_rect} yields the following expression for the minimal rank decomposition of $\Rk \field_\int$:
  \begin{align*}
    \Rec &:= \left\{\seg{g_u, c_v} \mid g_u\leq c_v\ \mbox{and}\ 2u \equiv 2v\!\!\!\! \mod 2\right\},\\
    \Sec &:= \left\{\seg{g_u, c_v} \mid g_u\leq c_v\ \mbox{and}\ 2u \not\equiv 2v\!\!\!\! \mod 2\right\},
  \end{align*}
  where $g_1, \cdots, g_k$ are the \revisedversion{minimal elements in~$\int$} (sorted by increasing  abscissae), $c_1, \cdots, c_l$ are its \revisedversion{maximal elements} (sorted likewise),  $g_{i+\frac{1}{2}} := g_i \vee g_{i+1}$ (join) for each $i\in\integint{k-1}$,  and $c_{j+\frac{1}{2}} := c_j \wedge c_{j+1}$ (meet) for each $j\in\integint{l-1}$. \revisedversion{Here, $u$ (resp. $v$) ranges over all integers and half-integers in the interval~$[1, k]$ (resp.~$[ 1, l]$).} 

\begin{remark}\label{rem:complexity_decomp_usual-rank}
    In the general case where $\grid = \prod_{i=1}^d \libr 1, n_i\ribr
    \subset \R^d$, by applying~\eqref{eq:incl_excl_rect} to every pair
    of comparable indices~$s\leq t\in\grid$ in sequence, one computes
    the minimal rank decomposition of the usual rank invariant in time
    $O\left( 2^{2d}\, \# \RelationAsSet{\grid}\right)$, assuming
    constant-time access to the ranks $\Rk \Mod (s', t')$ and
    constant-time arithmetic operations\footnote{We are considering an
      implementation that iterates over the indices $s', t'$ such that
      $\|s'-s\|_\infty\leq 1$ and $\|t'-t\|_\infty\leq 1$ by
      increasing order of the $1$-norms $\|s'-s\|_1$ and $\|t'-t\|_1$,
      so that the $1$-norms do not have to be re-computed from scratch
      at every step. Such an implementation boils down to iterating
      over the vertices of the unit hypercube in~$\R^d$ by increasing
      order of the number of $1$'s in their coordinates.}. This bound
    is in~$O\left( 2^{2d}\, \prod_{i=1}^d n_i^2\right)$, and when $d$
    is fixed, it is linear in the size of the \revisedversion{domain} of the usual
    rank invariant \revisedversion{viewed} as a map $\RelationAsSet{\grid}\to\Z$. When the
    module~$\Mod$ comes from a simplicial filtration over the
    grid~$\grid$ with $n = \max_{i} n_i$ simplices in total, the usual
    rank invariant itself can be pre-computed and stored, e.g. by
    naively computing the ranks $\Rk \Mod(s,t)$ for each pair $s\leq
    t\in\grid$ independently, which takes $O(n^{2d+\omega})$ time in
    total, where $2\leq \omega<2.373$ is the exponent for matrix
    multiplication~\cite{milosavljevic2011zigzag}. Adding in the
    computation time for the minimal rank decomposition yields a bound
    in~$O(n^{2d+\omega} + (2n)^{2d})$. While naive, this approach
    already compares favorably, in terms of running time, to the
    computation of other (stronger) invariants such as for instance
    the direct-sum decomposition of~$\Mod$---for which the best known
    algorithm runs in $O(n^{d(2\omega+1)})$
    time~\cite{dey2019generalized}. Moreover, the running time of
    computing the minimal rank decomposition is dominated by the
    running time of pre-computing the usual rank invariant, for which
    there is room for improvement. In the special case where $d=2$ for
    instance, assuming the filtration is $1$-critical (i.e., each
    simplex has a unique minimal time of appearance in the
    filtration), there is an $O(n^4)$-time algorithm to compute the
    usual rank invariant~\cite{botnan_et_al:LIPIcs:2020:12180}, and
    computing its minimal rank decomposition also takes $O(n^4)$
    time. By comparison, the best known algorithm to compute the
    direct-sum decomposition of~$\Mod$ in this setting takes
    $O(n^{2\omega+1})$ time~\cite{dey2019generalized}.
\end{remark}

\subsection{The \( \mathbb{R}^d \) case}

\begin{figure}[tb]
  \centering
  \includegraphics[width=\textwidth]{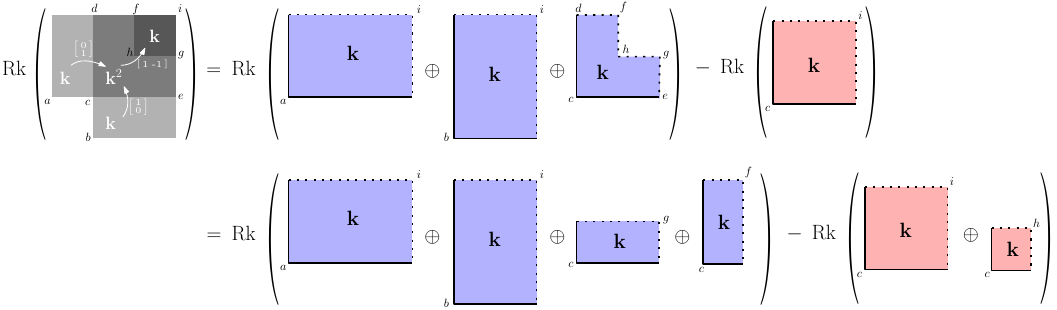}
  \caption{Left: an indecomposable persistence module with $2$ generators, one at $a$ and one at $b$, and a relation equating them at $h$ (indices $d, e, f, g, i$ lie at infinity).
    Right: 
minimal rank decompositions of the usual rank invariant of the module,  respectively over the lower hooks (top row) and over the half-open rectangles (bottom row). Blue is for intervals in $\Rec$ while red is for intervals in $\Sec$. Solid  boundaries belong to the intervals, while dotted boundaries do not.} 
  \label{fig:decomp_indec}
\end{figure}

\revisedversion{
Since $\R^d$ is a lattice and we are interested in finitely presented modules, the rectangles used for the decomposition of the usual rank invariant are half-open as in Section~\ref{sec:restrictions_usual}, even though the usual rank invariant itself is evaluated on closed rectangles\footnote{This  infringes the requirement from Definition~\ref{def:rank_decomp} that the intervals used in the decomposition be taken from within the collection of segments. However, as mentioned previously, this requirement is meant only to ensure the uniqueness of the minimal decomposition, and can therefore be dropped here since uniqueness is otherwise ensured.}. Corollary~\ref{cor:decomp_half-open} reformulates as follows: 
\begin{corollary}\label{cor:rank-decomp_multipers_rect}
  The usual rank invariant of every finitely presented persistence module~$\Mod$ over $\R^d$ admits a unique minimal rank decomposition over the half-open rectangles in~$\R^d$.
\end{corollary}
Meanwhile,
Theorem~\ref{thm:upper semilattice} implies the following: 
\begin{corollary}\label{cor:rank-decomp_multipers_hook}
  The usual rank invariant of every finitely presented persistence module~$\Mod$ over $\R^d$ admits a unique minimal rank decomposition over the lower hooks in~$\R^d$.
\end{corollary}
These two results are illustrated in Figure~\ref{fig:decomp_indec}. The first one can also be derived from the second one, by further decomposing the rank invariants of hook modules over the half-open rectangles. The decomposition can in fact be done in the Grothendieck group~$\K(\Erank^{\rm fp})$ directly, via rank-exact resolutions of hook modules by half-open rectangle modules, as described next. 

}

\begin{lemma} \label{lem:Koszul-type}  
Assume \( \int = \bigcup_{i=1}^n \int_i \), with  each $\int_i$ being a  down-set in the poset \( \int \).   Then, the sequence
\[ 0 \to \field_{\int} \to \bigoplus_{i=1}^n \field_{\int_i} \to \bigoplus_{i < j} \field_{\int_i \cap \int_j} \to \bigoplus_{i < j < \ell} \field_{\int_i \cap \int_j \cap \int_\ell} \to \cdots \to \field_{\bigcap_i I_i} \to 0 \]
with component maps 
\[ \field_{\cap_{i \in X} \int_i} \to \field_{\cap_{i \in Y} \int_i} \colon \begin{cases} (-1)^{\card\{x \in X \mid x < y\}}\, \text{projection} & \text{if } Y = X \cup \{y\} \\ 0 & \text{otherwise} \end{cases} \]
is rank exact. Note that the assumption that each $\int_i$ is a down-set in $\int$ guarantees the projections in the component maps to be well-defined.
\end{lemma}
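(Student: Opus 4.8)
The plan is to establish two things: first, that the sequence is exact (in the underlying abelian category $\rep\pos$), and second, that it is rank-exact, i.e. that the alternating sum of rank invariants vanishes. Since the segments $\dhook{i,j}$ appearing in the rank computation, together with Proposition~\ref{prop rank exact by Hom}, reduce rank-exactness of a long exact sequence to an exactness check after applying $\Hom(\field_{\dhook{i,j}},-)$, I would actually aim to prove exactness of the sequence and of $\Hom(\field_{\dhook{i,j}},-)$ applied to it simultaneously, by working pointwise and then at the level of sections over lower hooks.

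First I would check exactness pointwise. Evaluating at a point $p \in \int$, each $\field_{\cap_{i\in X}\int_i}$ becomes either $\field$ (if $p \in \int_i$ for all $i \in X$) or $0$. Writing $S_p = \{i \mid p \in \int_i\}$ — which is non-empty for every $p \in \int$ since $\int = \bigcup_i \int_i$ — the evaluated complex is, up to the leading $\field_\int(p) = \field$, exactly the simplicial/augmented Čech cochain complex of a simplex on vertex set $S_p$ with the standard incidence signs. That complex (the reduced cochain complex of a point) is acyclic, so the evaluated sequence $0 \to \field \to \bigoplus_{i \in S_p}\field \to \bigoplus_{i<j \in S_p}\field \to \cdots \to 0$ is exact. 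This is the standard inclusion–exclusion identity $\sum_{\varnothing \neq X \subseteq S_p}(-1)^{\card X+1} = 1$ upgraded to a contractible complex; the signs in the component maps are precisely chosen so that the composite of two consecutive maps is zero, which is the usual Koszul/simplicial-sign verification. Hence the sequence is exact in $\rep\pos$.

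Next I would upgrade to rank-exactness. By Theorem~\ref{thm is exact cat} and Proposition~\ref{prop rank exact by Hom}, it suffices to show that applying $\Hom(\field_{\dhook{i,j}},-)$ to the sequence keeps it exact for every segment $\seg{i,j}$ in $\pos$ — equivalently (Lemma~\ref{lemma rank vs L-shape}) that the functors $(-)_i$ and $\Hom(\field_{\dhook{i,j}},-)$ interact exactly with the complex. For a projective presentation argument: $\Hom(\field_{\dhook{i,j}}, \field_\knt)$ for an interval $\knt$ that is a down-set in $\int$ can be computed from the exact sequence $P_j \to P_i \to \field_{\dhook{i,j}} \to 0$, giving $\Hom(\field_{\dhook{i,j}},\field_\knt) = \ker\!\big(\field_\knt(i) \to \field_\knt(j)\big)$; since all the intervals $\cap_{i\in X}\int_i$ in the complex are down-sets in $\int$, the structure map $i \leq j$ within each is either an identity $\field \to \field$ or a map into $0$, so $\Hom(\field_{\dhook{i,j}},-)$ applied to the complex is again a pointwise (Čech-type) complex over the appropriate subset of indices, and the same simplicial acyclicity argument applies verbatim. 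Because this holds for all $\dhook{i,j}$, condition (2) of Proposition~\ref{prop rank exact by Hom} is met at every $\seg{i,j}$, so the sequence is rank-exact (the long exact sequence being spliced from short rank-exact ones, additivity of $\Rk$ propagates).

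The main obstacle, and the place to be careful, is the bookkeeping of signs and the well-definedness of the projection maps: one must verify that $Y = X \cup \{y\}$ with the sign $(-1)^{\card\{x \in X \mid x < y\}}$ really makes consecutive composites cancel in pairs (the standard cofacial identity), and that the "projection" $\field_{\cap_{i\in X}\int_i} \to \field_{\cap_{i\in Y}\int_i}$ is a morphism of representations — this is exactly where the hypothesis that each $\int_i$ (hence each finite intersection) is a down-set in $\int$ is used, guaranteeing $\cap_{i\in Y}\int_i$ is a down-set inside $\cap_{i\in X}\int_i$ and so the "restriction" map $\field_{\cap_{i\in X}\int_i} \to \field_{\cap_{i\in Y}\int_i}$ is natural. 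Everything else is a routine reduction to the acyclicity of the augmented cochain complex of a non-empty simplex.
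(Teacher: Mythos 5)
Your proof is correct, but it takes a genuinely different route from the paper's. The paper argues by induction on $n$: it splits off $\int_1$ from $\int'=\bigcup_{i\ge 2}\int_i$ via the rank-exact Mayer--Vietoris sequence $0\to\field_\int\to\field_{\int_1}\oplus\field_{\int'}\to\field_{\int_1\cap\int'}\to 0$, assembles the inductively obtained sequences for $\int'$ and $\int_1\cap\int'$ into a commutative diagram, takes total complexes twice, and cancels two contractible summands up to homotopy; rank-exactness is carried along because total complexes of diagrams of rank-exact sequences remain rank-exact. You instead prove exactness directly by evaluating at each point $p$ and recognizing the augmented cochain complex of the nonempty simplex on $S_p=\{i\mid p\in\int_i\}$, and then obtain rank-exactness by applying $\Hom(\field_{\dhook{i,j}},-)$ and invoking Proposition~\ref{prop rank exact by Hom} on the spliced short exact sequences (using left-exactness of $\Hom$ to identify the syzygies). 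This is arguably more transparent and avoids the diagram bookkeeping, at the cost of leaning on the hook-characterization of rank-exactness. The one place where your ``applies verbatim'' needs a little more care is the case where both $i$ and $j$ lie in $\int$: there $\Hom(\field_{\dhook{i,j}},\field_{\cap_{x\in X}\int_x})=\field$ exactly when $X\subseteq S_i$ and $X\not\subseteq S_j$, so what you get is the \emph{relative} cochain complex of the simplex pair $(\Delta^{S_i},\Delta^{S_j})$ with $\varnothing\neq S_j\subseteq S_i$ (the inclusion $S_j\subseteq S_i$ being exactly where the down-set hypothesis enters), which is acyclic but is not literally the same augmented complex as before; also note that for $i\notin\int$ the structure maps can be of the form $0\to\field$, not only identities or maps into $0$, though this does not affect the kernels you compute. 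With those two details filled in, the argument is complete.
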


\begin{proof}
We employ induction on \( n \). Let \( \int' = \bigcup_{i=2}^n \int_i \). The short exact sequence
\[ 0 \to \field_{\int} \to \field_{\int_1} \oplus \field_{\int'} \to \field_{\int_1 \cap \int'} \to 0 \]
is rank exact. Inductively, we have rank exact sequences for \( \int' \) and for \( \int_1 \cap \int' = \bigcup_{i=2}^n \int_1 \cap \int_i \) as in the two rows of the following diagram.
\[ 
\adjustbox{scale=0.9,center}{\begin{tikzcd}[sep=6mm]
0 \ar[r] & \field_{\int'} \ar[r] \ar[d] & \bigoplus_{1 < i} \field_{\int_i} \ar[r] \ar[d] & \bigoplus_{1 < i < j} \field_{\int_i \cap \int_j} \ar[r] \ar[d] & \cdots \ar[r] & \field_{\cap_{i>1} \int_i} \ar[r] \ar[d] & 0 \\
0 \ar[r] & \field_{\int_1 \cap \int'} \ar[r] & \bigoplus_i \field_{\int_1 \cap \int_i} \ar[r] & \bigoplus_{1 < i < j} \field_{\int_1 \cap \int_i \cap \int_j} \ar[r] & \cdots \ar[r] & \field_{\cap_i \int_i} \ar[r] & 0
\end{tikzcd}} \]
We can add the vertical arrows and check that the entire diagram commutes (for appropriate sign conventions). In particular its total complex is rank exact again. Putting this total complex in the upper row, we obtain the following commutative diagram.
\[ 
\adjustbox{scale=0.9,center}{\begin{tikzcd}[sep=5mm]
0 \ar[r] & 0 \ar[r] \ar[d] & \field_{\int'} \ar[r] \ar[d] & \bigoplus_{1 < i} \field_{\int_i} \oplus \field_{\int_1 \cap \int'} \ar[r] \ar[d] & \bigoplus_{i < j} \field_{\int_i \cap \int_j} \ar[r] \ar[d] & \cdots \ar[r] & \field_{\cap_{1} \int_i} \ar[r] \ar[d] & 0 \\
0 \ar[r] & \field_{\int} \ar[r] & \field_{\int_1} \oplus \field_{\int'} \ar[r] & \field_{\int_1 \cap \int'} \ar[r] & 0 \ar[r] & \cdots \ar[r] & 0 \ar[r] & 0 
\end{tikzcd}} \]
Choosing the vertical arrows to be inclusion of and projection to the corresponding summand we obtain a new commutative diagram. Again we form the total complex, and observe that up to homotopy the two copies of \( \field_{\int'} \) and the two copies of \( \field_{\int_1 \cap \int'} \) cancel. Thus we get the rank exact sequence in the statement of the lemma.
\end{proof}

\revisedversion{
  From Lemma~\ref{lem:Koszul-type} and  Theorem~\ref{thm:upper semilattice} we derive the following categorical enhancement of Corollary~\ref{cor:rank-decomp_multipers_rect}:
}
\begin{theorem} \label{thm:basis_for_Rn}
For the poset \( \mathbb{R}^d \), the set
\[ \{ \field_{[a_1, b_1) \times \cdots \times [a_d, b_d)}] \mid a_i < b_i \in \mathbb{R} \cup \{ \infty \} \} \]
is a basis of \( \K(\Erank^{\rm fp}) \).
\end{theorem}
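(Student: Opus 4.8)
The statement asserts that the hook-interval classes $[\field_{\dhook{a,b}}]$, for $a < b$ in $(\R \cup \{\infty\})^d$ with $\dhook{a,b}$ a right-open-rectangle hook, form a basis of $\K(\Erank^{\rm fp})$ for $\pos = \R^d$. Since $\R^d$ is an upper semi-lattice (indeed a complete lattice), Theorem~\ref{thm:upper semilattice} applies directly: it already tells us that $\{ [\field_{\dhook{i,j}}] \mid i < j \in \R^d \cup \{\infty\} \}$ is a basis of $\K(\Erank^{\rm fp})$ and that $\Rk$ is injective on it. So the only thing left to do is to identify \emph{which} hooks $\dhook{i,j}$ actually occur, i.e. to check that in $\R^d$ the lower hooks are precisely the sets $[a_1,b_1) \times \cdots \times [a_d, b_d)$ with $a_i < b_i \in \R \cup \{\infty\}$, matching the indexing in the statement.

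First I would recall the definition $\dhook{i,j} = \{ \ell \in \pos \mid i \leq \ell \not\geq j\}$ and specialize it to $\pos = \R^d$ with the product order. For $i = (i_1,\dots,i_d)$ and $j = (j_1,\dots,j_d)$, the condition $i \leq \ell$ is $\ell_k \geq i_k$ for all $k$, and $\ell \not\geq j$ means there is some coordinate $k$ with $\ell_k < j_k$. A short computation shows that $\dhook{i,j}$ need not be a rectangle for arbitrary $j$; however, what matters is which hooks arise as left Kan extensions of hooks over finite join-closed subposets (this is the content of Lemma~\ref{lem:morpos} and the proof of Theorem~\ref{thm:upper semilattice}, via Proposition~\ref{prop:finitely-pres_finite-resols}). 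Here I would use that a finitely presented module over $\R^d$ is a left Kan extension from a finite grid $\subpos = \prod_k F_k$ (with each $F_k \subset \R$ finite), and over such a grid the projective hooks $\field_{\dhook{s,t}}$ push forward, by Lemma~\ref{lem:morpos}, to $\field_{\dhook{f(s),f(t)}}$ where $f$ is the inclusion. The key point is that, for $s,t$ in a product grid, the hook $\dhook{s,t}$ in the grid \emph{is} a product of one-dimensional hooks, hence $\dhook{f(s),f(t))}$ in $\R^d$ is a right-open rectangle $\prod_k [s_k, t_k)$, with the convention $[s_k,\infty) = \{\ell_k \geq s_k\}$ and the degenerate "$t_k = \infty$ in every non-restricting coordinate" handled by $\not\geq$. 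So the hooks that generate $\K(\Erank^{\rm fp})$ are exactly the claimed right-open rectangles.

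Concretely, the steps are: (1) invoke Theorem~\ref{thm:upper semilattice} for $\pos = \R^d$ to get that the hook classes form a basis and $\Rk$ is injective; (2) verify the set-theoretic identity that, in $\R^d$, the relevant lower hooks $\dhook{i,j}$ — namely those arising from hooks in finite product subgrids under left Kan extension — are precisely the sets $[a_1,b_1) \times \cdots \times [a_d,b_d)$ with $a_k < b_k \in \R \cup \{\infty\}$; (3) conclude that the reindexed family in the statement is the same basis. Step (2) also requires checking the degenerate cases where some $b_k = \infty$: there $[a_k, b_k) = [a_k,\infty)$ is all of $\{\ell_k \geq a_k\}$, which matches the underlying-abelian-category projectives $\field_{\dhook{a,\infty}}$ appearing in Theorem~\ref{thm is exact cat}, and one must ensure no double counting and that $a_k < b_k$ is the exact non-degeneracy condition.

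**Main obstacle.** The substantive work is entirely in step (2): pinning down the precise shape of hooks in $\R^d$ and confirming that right-open rectangles are exactly the intervals of hook type that occur. One must be careful that an arbitrary $\dhook{i,j}$ for $i,j \in \R^d$ is \emph{not} in general a rectangle (the condition $\ell \not\geq j$ is a disjunction over coordinates, carving out a staircase-shaped complement), so the argument genuinely relies on restricting to hooks coming from finite product grids via Lemma~\ref{lem:morpos}, rather than on all formal hooks $\dhook{i,j}$. Getting this reduction clean — and matching it against the $[a,b)$ parametrization including the $\infty$ conventions — is where the care is needed; the algebraic heavy lifting (that these classes are linearly independent and span) is already done in Theorem~\ref{thm:upper semilattice}.
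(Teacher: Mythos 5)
Your proposal rests on a set-theoretic identification that is false, and this sinks the whole strategy. You claim that for $s,t$ in a finite product grid the lower hook $\dhook{s,t}$ ``is a product of one-dimensional hooks,'' hence a right-open rectangle. It is not: by definition $\dhook{i,j}=\{\ell \mid i\leq \ell \not\geq j\}$, and the condition $\ell\not\geq j$ is a disjunction over coordinates, so for $d\geq 2$ and $j$ with two or more non-infinite coordinates the hook is a staircase-shaped region, e.g.\ $\dhook{(0,0),(1,1)}=\bigl([0,\infty)^2\bigr)\setminus\bigl([1,\infty)^2\bigr)$ in $\R^2$, which is an L-shape and not $[0,1)^2$. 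Conversely, $[0,1)^2$ is not a lower hook for any choice of $i<j$. So the family of right-open rectangles in the statement is \emph{not} a reindexing of the hook basis of Theorem~\ref{thm:upper semilattice}; the theorem asserts a genuine change of basis, and your ``step (2)'' has nothing true to verify. A secondary consequence is that your linear-independence claim also has no support: Theorem~\ref{thm:upper semilattice} gives independence of the hook classes, not of the rectangle classes, which are a different family.

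What is actually needed, and what the paper does, is to express each hook class in terms of rectangle classes inside $\K(\Erank^{\rm fp})$. Concretely, one writes $\dhook{\mathbf a,\mathbf b}=\bigcup_{i=1}^d R_i$ where $R_i=[a_1,\infty)\times\cdots\times[a_i,b_i)\times\cdots\times[a_d,\infty)$; each $R_i$ is a down-set in the hook and all intersections of the $R_i$ are right-open rectangles. A Koszul-type rank-exact sequence (Lemma~\ref{lem:Koszul-type}, proved by induction on the number of pieces via total complexes) then gives $[\field_{\dhook{\mathbf a,\mathbf b}}]=\sum_{\varnothing\neq X\subseteq\{1,\dots,d\}}(-1)^{\card X+1}[\field_{\bigcap_{i\in X}R_i}]$, so the rectangle classes span because the hook classes do. Linear independence of the rectangle classes is then obtained from Proposition~\ref{prop:complete} (completeness of the rank invariant on interval-decomposable modules), not from Theorem~\ref{thm:upper semilattice}. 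Your proposal contains neither the inclusion-exclusion/resolution step nor a valid independence argument for the rectangles, so it does not prove the theorem.
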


\begin{proof}
Note that
\[ \dhook{ \mathbf{a}, \mathbf{b} } = \bigcup_{i = 1}^d [a_1, \infty) \times \cdots \times [a_{i-1}, \infty) \times [a_i, b_i) \times [a_{i+1}, \infty) \times \cdots \times [a_d, \infty), \]
and all the sets on the right-hand side are down-sets in \( \dhook{ \mathbf{a}, \mathbf{b} } \). Thus, Lemma~\ref{lem:Koszul-type} applies. In particular, any \( [ \field_{\dhook{ \mathbf{a}, \mathbf{b} }} ] \) is a linear combination of vectors in our candidate basis, which therefore generates \( \K(\Erank^{\rm fp}) \) by  Theorem~\ref{thm:upper semilattice}.

The linear independence follows from Proposition~\ref{prop:complete}---alternatively, linear independence can also be verified by carefully analyzing the base change from the basis of Theorem~\ref{thm:upper semilattice} to the candidate basis here.
\end{proof}

\subsection{Restrictions to lines}
\label{sec:rank-decomp_multi-d_to_1-d}

\revisedversion{
From Example~\ref{ex upward line} we get the following:
\begin{corollary}\label{cor:R-S_restrict-line}
Let $\Mod$ be a pfd persistence module over $\R^d$ such that the usual rank invariant $\Rk \Mod$ admits a rank decomposition $(\Rec, \Sec)$ over half-open  (resp. open, closed, all) rectangles. Then, for any upward sloping line $\line$  in $\R^d$, $(\Rec, \Sec)$ restricts to a rank decomposition $({\Rec}_{|\line}, {\Sec}_{|\line})$ of  $\Rk {\Mod}_{|\line}$ over half-open (resp. open, closed, all) intervals.
\end{corollary}

}


\begin{figure}[tb]
  \centering
  \includegraphics[width=\textwidth]{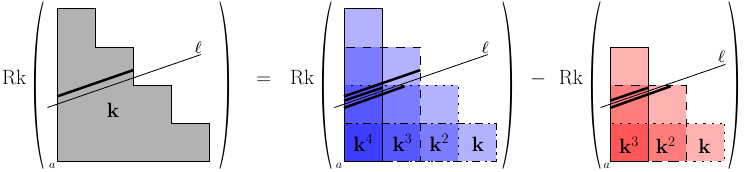}
  \caption{Restricting an interval module $\field_\int$ to an upward sloping line $\ell$ (left) yields a restriction of the minimal rank decomposition of $\Rk \field_\int$ to $\ell$ (right)---for clarity, the rectangles' boundaries are shown with different line styles. Here, the restricted rank decomposition is not minimal, as the two interval summands of $\field_{\Sec|_\ell}$ cancel out with two of the three interval summands of $\field_{\Rec|_\ell}$. } 
  \label{fig:line_restriction}
\end{figure}

Note that the restriction of a minimal decomposition may not be minimal, as different rectangles in $\Rec$ and $\Sec$ may restrict to the same 1-d interval---see Figure~\ref{fig:line_restriction} for an illustration. However, by Corollary~\ref{cor:min-decomp_exists_unique}, the minimal rank decomposition $(\Rec^*, \Sec^*)$ of $\Rk \Mod_{|\line}$ is easily obtained by removing all the common elements in ${\Rec}_{|\line}$ and ${\Sec}_{|\line}$. Furthermore, as illustrated in Figure~\ref{fig:line_restriction} and formalized in the following result,  $(\Rec^*, \Sec^*)$ actually coincides with the persistence barcode of the one-parameter module $\Mod|_\ell$.

\begin{corollary}\label{cor:R-S_1param}
\revisedversion{For every} pfd persistence module $\Mod$ indexed over the real line, \revisedversion{the usual rank invariant~$\Rk \Mod$} admits a unique minimal rank decomposition $(\Rec, \Sec)$ \revisedversion{over all intervals of~$\R$}, given by $\Rec=\dgm \Mod$, the persistence barcode of~$\Mod$, and $\Sec=\emptyset$.
\end{corollary}
\begin{proof}
\revisedversion{Call $\Int$ the collection of all closed intervals of~$\R$, and note that $\widehat{\Int}$ as defined in~(\ref{eq:hatInt}) is then the collection of all intervals of~$\R$. The structure theorem for pfd one-parameter persistence modules~\cite{Crawley-Boevey2012}, combined with the fact that the usual rank invariant is additive on finite direct sums, implies that $(\dgm \Mod, \emptyset)$ is a rank decomposition of~$\Rk \Mod=\Rk_\Int \Mod$ over~$\widehat{\Int}$. This decomposition is minimal and Theorem~\ref{thm:uniqueness} implies its uniqueness. }
\end{proof}

\subsection{Stability}
\label{sec:stability}

We conclude this section by saying a few words about the stability of our rank decompositions.
Recall from Corollary~\ref{cor:rank_decomp_vs_rank_decomp-first} that we have $\field_{\Rec} \oplus \field_{\Sec'} \simeq \field_{\Rec'} \oplus \field_{\Sec}$ for any two rank decompositions $(\Rec, \Sec)$ and $(\Rec', \Sec')$ of the same persistence module~$\Mod$, or of two persistence modules~$\Mod, \Mod'$ sharing the same (usual) rank invariant. In effect, this is telling us that two rank decompositions are equivalent whenever their ground modules have the same rank invariant. Using the matching (pseudo-)distance~$\distm$ from~\cite{landi2018rank}, we can derive a metric version of this statement (Theorem~\ref{th:matching-distance_stability}), which bounds the defect of equivalence between  two rank decompositions in terms of the fibered distance between  the rank invariants of their ground modules. Recall that the matching distance between two pfd persistence modules $\Mod, \Nod$ in~$\R^d$ is defined as follows:
\begin{equation}\label{eq:dmatch}
\distm(\Mod, \Nod) = \sup_{\line\ \text{upward sloping}}\ \omega(\line)\ \distb(\Mod|_\line, \Nod|_\line),
\end{equation}
where $\distb$ denotes the usual bottleneck distance between one-parameter persistence modules, and where the weight $\omega(\line)$ assigned to any upward sloping  line~$\line$ parametrized by $\lambda \mapsto (1-\lambda)s +\lambda t$ with $\lambda$ ranging over $\R$ while $s,t$ are fixed in~$\R^d$ and satisfy $s_i<t_i$ for each $i=1, \cdots, d$, is
\[
\omega(\line) = \frac{\min_i t_i-s_i}{\max_i t_i-s_i}>0.
\]
%
%
\begin{theorem}\label{th:matching-distance_stability}
  Let $\Mod, \Mod'$ be pfd persistence modules indexed over $\R^d$. Then, for any rank decompositions $(\Rec, \Sec)$ and $(\Rec', \Sec')$ of $\Rk \Mod$ and $\Rk \Mod'$ respectively \revisedversion{over all rectangles in~$\R^d$}, we have:
  \[
  \distm(\field_{\Rec} \oplus \field_{\Sec'}, \field_{\Rec'} \oplus \field_{\Sec}) \leq \distm(\Mod, \Mod').
  \]
\end{theorem}
\begin{proof}
  Take any upward sloping line $\line$ in $\R^d$.
By~\eqref{eq:dmatch},
  we have:
  \[
  \distb(\Mod_{|\line}, \Mod'_{|\line}) \leq \omega(\line)^{-1}\, \distm(\Mod, \Mod').
  \]
Meanwhile, by Corollary~\ref{cor:R-S_restrict-line}, $(\Rec_{|\line}, \Sec_{|\line})$ is a rank decomposition of $\Rk \Mod_{|\line}$, and $(\Rec'_{|\line}, \Sec'_{|\line})$ is a rank decomposition of $\Rk \Mod'_{|\line}$ \revisedversion{over all intervals of~$\R$}. By Proposition~\ref{prop:complete} and the structure theorem for pfd one-parameter persistence modules~\cite{Crawley-Boevey2012}, we then have $\Mod_{|\line} \oplus \field_{\Sec_{|\line}} \simeq \field_{\Rec_{|\line}}$ and $\Mod'_{|\line} \oplus \field_{\Sec'_{|\line}} \simeq \field_{\Rec'_{|\line}}$, from which we deduce:  
  \[
  \distb(\Mod_{|\line}, \Mod'_{|\line}) \geq \distb(\Mod_{|\line} \oplus \field_{\Sec_{|\line}} \oplus \field_{\Sec'_{|\line}}, \Mod'_{|\line} \oplus \field_{\Sec_{|\line}} \oplus \field_{\Sec'_{|\line}}) = \distb(\field_{\Rec_{|\line}} \oplus \field_{\Sec'_{|\line}}, \field_{\Rec'_{|\line}} \oplus \field_{\Sec_{|\line}}).
  \]
  %
  Combined with the previous equation, this gives:
  \[
  \distb(\field_{\Rec_{|\line}} \oplus \field_{\Sec'_{|\line}}, \field_{\Rec'_{|\line}} \oplus \field_{\Sec_{|\line}}) \leq  \omega(\line)^{-1}\, \distm(\Mod, \Mod').
  \]
  The result follows then by taking the supremum on the left-hand side over all possible choices of upward sloping lines $\line$.
\end{proof}

It is worth pointing out that different choices of rank decompositions $(\Rec, \Sec)$ and $(\Rec', \Sec')$ for $\Mod$ and $\Mod'$ may yield different values for the matching distance $\distm(\field_{\Rec} \oplus \field_{\Sec'}, \field_{\Rec'} \oplus \field_{\Sec})$.  It turns out that the rank decompositions that maximize this distance are precisely the minimal rank decompositions, which therefore satisfy a universal property also in terms of the metric between decompositions:
\begin{proposition}\label{prop:min-decomp_max-dist}
  Let $\Mod, \Mod'$ be pfd persistence modules indexed over $\R^d$. Then, for any rank decompositions $(\Rec, \Sec)$ and $(\Rec', \Sec')$ of $\Rk \Mod$ and $\Rk \Mod'$ respectively \revisedversion{over all rectangles in~$\R^d$}, we have:
  \[
  \distm(\field_{\Rec} \oplus \field_{\Sec'}, \field_{\Rec'} \oplus \field_{\Sec}) \leq \distm(\field_{\Rec^*} \oplus \field_{\Sec'^*}, \field_{\Rec'^*} \oplus \field_{\Sec^*}),
  \]
where $(\Rec^*, \Sec^*)$ and $(\Rec'^*, \Sec'^*)$ are the minimal rank decompositions of $\Rk \Mod$ and $\Rk \Mod'$ respectively \revisedversion{over all rectangles in~$\R^d$}---which exist as soon as $(\Rec, \Sec)$ and $(\Rec', \Sec')$ do, by Theorem~\ref{thm:uniqueness}. 
\end{proposition}
\begin{proof}
  Let $\Tec := \Rec \setminus \Rec^* = \Sec \setminus \Sec^*$, and $\Tec' := \Rec' \setminus \Rec'^* = \Sec' \setminus \Sec'^*$. Note that $\Tec, \Tec'$ are well-defined by Theorem~\ref{thm:uniqueness}. Then, for any upward sloping line~$\line$, we have:
  \begin{align*}
  \distb(\field_{\Rec_{|\line}} \oplus \field_{\Sec'_{|\line}},\ \field_{\Rec'_{|\line}} \oplus \field_{\Sec_{|\line}})\\[0.5em]
  &\hspace{-2.5cm}=\  \distb(\field_{{\Rec^*}_{|\line}} \oplus \field_{{\Sec'^*}_{|\line}} \oplus \field_{\Tec_{|\line}} \oplus \field_{\Tec'_{|\line}},\ \field_{{\Rec'^*}_{|\line}} \oplus \field_{{\Sec^*}_{|\line}} \oplus \field_{\Tec_{|\line}} \oplus \field_{\Tec'_{|\line}})\\[0.5em]
  &\hspace{-2.5cm}\leq\ \distb(\field_{{\Rec^*}_{|\line}} \oplus \field_{{\Sec'^*}_{|\line}},\ \field_{{\Rec'^*}_{|\line}} \oplus \field_{{\Sec^*}_{|\line}}).
  \end{align*}
  The result follows then after multiplying by~$\omega(\line)$ and  taking the supremum on both sides over all possible choices of upward sloping lines~$\line$.  
\end{proof}

Finally, we can also bound the defect of equivalence between  two rank decompositions in terms of the defect of isomorphism between their ground modules---measured by the interleaving distance~$\disti$. This is a straight consequence of our Theorem~\ref{th:matching-distance_stability} and of Theorem~1 from~\cite{landi2018rank}:
\begin{corollary}\label{cor:matching-distance_stability}
  Let $\Mod, \Mod'$ be pfd persistence modules indexed over $\R^d$. Then, for any rank decompositions $(\Rec, \Sec)$ and $(\Rec', \Sec')$ of $\Rk \Mod$ and $\Rk \Mod'$ respectively \revisedversion{over all rectangles in~$\R^d$}, we have:
  \[
  \distm(\field_{\Rec} \oplus \field_{\Sec'}, \field_{\Rec'} \oplus \field_{\Sec}) \leq \disti(\Mod, \Mod').
  \]
\end{corollary}

\section{Signed barcodes and prominence diagrams for multi-parameter persistence modules}
\label{sec:signed_barcodes}

In the context of topological data analysis, the minimal rank decomposition $(\Rec, \Sec)$ of the
(usual or generalized)
rank invariant of a persistence module $\Mod \colon \R^d \to \Vec_{\field}$ encodes its structure visually.
In the particular case of the usual rank invariant, we saw in the  examples of Section~\ref{sec:rank-decomp_pers} that we get direct access to the following pieces of information:
\begin{itemize}
\item the rank $\Rk \Mod (s,t)$ between any pair of indices $s\leq t\in\R^d$, obtained as the number of rectangles in~$\Rec$ that contain both $s$ and $t$ minus the number of rectangles in~$\Sec$ that contain both $s$ and $t$;
  \item the barcode of the restriction of~$\Mod$ to any upward sloping line~$\ell$, obtained by simplifying the restriction of $(\Rec, \Sec)$ to $\ell$, each bar of which comes from the intersection of a rectangle in $\Rec$ or $\Sec$ with~$\ell$. 
\end{itemize}
The main drawback of representing rectangles as rectangles is that their overlaid arrangement quickly becomes hard to read---see e.g. Figure~\ref{fig:line_restriction}.

\subsection{Signed barcodes}

An alternate representation of the rectangles is by their diagonal with positive slope in $\R^d$. We call this representation the {\em signed barcode} of~$\Rk \Mod$, where each bar is the diagonal (with positive slope) of a particular rectangle in $\Rec$ or $\Sec$, and where the sign is positive for bars coming from $\Rec$ and negative for bars coming from~$\Sec$---see Figure~\ref{fig:straight-line_barcodes} for an illustration. Like the rectangles, the bars are considered with multiplicity.

\begin{figure}[tb]
  \centering
  \includegraphics[width=\textwidth]{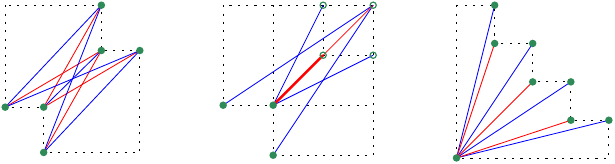}
  \caption{From left to right: signed barcodes corresponding to the usual rank decompositions of Figures~\ref{fig:decomp_2-2}, \ref{fig:decomp_indec}~(bottom row), and~\ref{fig:line_restriction} respectively. Blue bars are diagonals of rectangles in $\Rec$ and therefore counted positively, while red bars are diagonals of rectangles in $\Sec$ and therefore counted negatively. The bars' endpoints are marked in green (as a solid dot when the endpoint lies in the rectangle, as a circled dot when it does not---e.g. when it lies at infinity), to discriminate them from intersections. The thick red line segment in the center picture shows the overlap between a shorter red bar and a longer red bar sharing the same lower endpoint and slope. 
    }
  \label{fig:straight-line_barcodes}
\end{figure}

The signed barcode of $\Rk \Mod$ gives direct access to the same pieces of information as the rectangular representation---see Figure~\ref{fig:barcodes_interpretation}:
\begin{itemize}
\item the rank $\Rk \Mod (s, t)$ between any pair of indices $s\leq t\in\R^d$ is obtained as the number of positive bars that connect the down-set $s^-=\{u\in \R^d \mid u\leq s\}$ to the up-set $t^+=\{u\in \R^d \mid u\geq t\}$, minus the number of negative bars that connect $s^-$ to $t^+$---exactly as with persistence barcodes in the one-parameter case (see Figure~\ref{fig:decomp_1d}), except bars are now signed;
\item the barcode of the restriction of~$\Mod$ to any upward sloping line~$\ell$ is obtained by simplifying the restriction of $(\Rec, \Sec)$ to $\ell$, each bar of which comes from the projection of a bar $(s,t)$ in the signed barcode onto~$\ell$ according to the following rule---coming from the intersection of~$\ell$ with the rectangle $\seg{s, t}$: project $s$ onto the point $s'=\ell \cap \partial s^+$, and $t$ onto $t'=\ell \cap \partial t^-$, if these two points exist and satisfy $s'\leq t'$ (otherwise the projection is empty).
\end{itemize}
Beyond these features, the signed barcode makes it possible to visually grasp the global structure of the usual rank invariant $\Rk \Mod$, and in particular, to infer the directions along which topological features have the best chances to persist.

\begin{figure}[tb]
  \centering
  \includegraphics[width=0.8\textwidth]{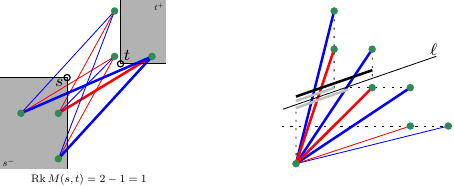}
  \caption{Left: computing $\Rk \Mod(s,t)$ for a pair of indices $s\leq t$ (the thick bars are the ones connecting the down-set $s^-$ to the up-set $t^+$). Right: restricting the minimal rank decomposition of $\Rk \Mod$ to an upward sloping line~$\ell$---the thick blue and red bars are the ones projecting to non-empty bars along~$\ell$, and among those projections, the thick gray bars get cancelled out during the simplification while the thick black bar remains in the barcode of~$\Mod|_\ell$.}
  \label{fig:barcodes_interpretation}
\end{figure}

\subsection{Signed prominence diagrams}
\label{sec:signed_pers_diags}

To each bar with endpoints $s\leq t$ in the usual signed barcode of a module~$\Mod: \R^d\to\Vec_{\field}$, we can associate its {\em signed prominence}, which is the $d$-dimensional vector $t-s$ if the bar corresponds to a rectangle in~$\Rec$, or $s-t$ if the bar corresponds to a rectangle in~$\Sec$. We call {\em signed prominence diagram} of~$\Mod$ the resulting collection of vectors in~$\R^d$---see Figure~\ref{fig:signed_prominences} for an illustration.

\begin{figure}[tb]
  \centering
  \includegraphics[width=0.8\textwidth]{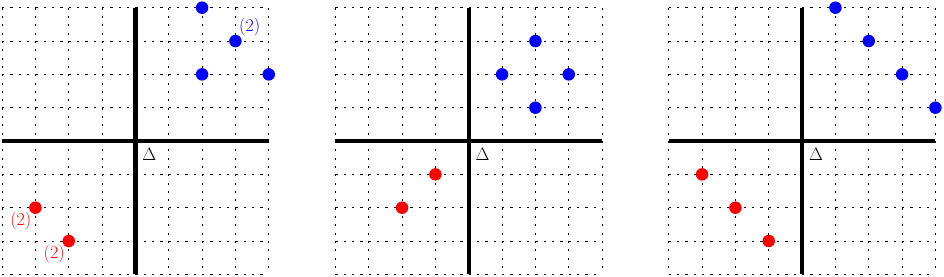}
  \caption{The signed prominence diagrams corresponding to the signed barcodes of Figure~\ref{fig:straight-line_barcodes}, in the same order. Blue dots correspond to blue bars (hence to rectangles in~$\Rec$), while red dots correspond to red bars (hence to rectangles in~$\Sec$). Multiplicities differring from~$1$ are indicated explicitly.
The union~$\Delta$ of the two coordinate axes plays the role of the diagonal, as explained below.}
  \label{fig:signed_prominences}
\end{figure}

\begin{example}
  In the one-parameter setting, the signed prominence diagram encodes the lengths of the bars in the unsigned barcode, i.e., the vertical distances of the points to the diagonal in the persistence diagram. It is a discrete measure on the real line.
\end{example}

In a signed prominence diagram, the union~$\Delta$ of the hyperplanes perpendicular to the coordinate axes and passing through the origin plays the role of the diagonal: a bar whose signed prominence lies close to~$\Delta$ can be viewed as noise, whereas a bar whose signed prominence lies far away from~$\Delta$ can be considered significant for the structure of the module~$\Mod$. The right way to formalize this intuition is via smoothings, as in the one-parameter case.

\begin{definition}\label{def:e-shift-smoothing}
  Given a persistence module $\Mod$ indexed over~$\R^d$, and a vector $\e\in \R_{\geq 0}^d$, the {\em $\e$-shift}~$\Mod[\e]$ is the module defined pointwise by $\Mod[\e](t) = \Mod(t+\e)$ and $\Mod[\e](s\leq t) = \Mod(s+\e \leq t+\e)$. There is a canonical morphism of persistence modules $\Mod\to\Mod[\e]$, whose image is called the {\em $\e$-smoothing} of~$\Mod$, denoted by~$\Mod^\e$.
\end{definition}

\begin{example}\label{ex:e-smoothing_rect}
  The $\e$-shift of an arbitrary rectangle module $\field_{\rec}$ is the rectangle module $\field_{\rec-\e}$, where by definition $\rec-\e = \{t-\e \mid t\in\rec\}$. The $\e$-smoothing of $\field_{\rec}$ is the rectangle module $\field_{\rec^\e}$, where by definition $\rec^\e$ is the rectangle $\rec\cap(\rec-\e)$, obtained from~$\rec$ by shifting the upper-right corner of~$\rec$ by~-$\e$. Note that $\field_{\rec^\e}$ is the trivial module whenever $\rec\cap (\rec-\e)= \emptyset$.
\end{example}

As it turns out, usual rank decompositions commute with smoothings, more precisely:

\begin{lemma}\label{lem:e-smoothed_decomp}
Suppose \revisedversion{the usual rank invariant of}~$\Mod: \R^d\to \Vec_{\field}$ admits a rank decomposition $(\Rec, \Sec)$ \revisedversion{over arbitrary rectangles in~$\R^d$}. Then, for any $\e\in\R_{\geq 0}^d$, the pair $(\Rec^\e, \Sec^\e)$ where $\Rec^\e = \{\rec^\e \mid \rec\in\Rec\}$ and $\Sec^\e = \{\sec^\e \mid \sec\in\Sec\}$ \revisedversion{decomposes the  usual rank invariant} of $\Mod^\e$. When $(\Rec, \Sec)$ is minimal, so is $(\Rec^\e, \Sec^\e)$ after removing the empty rectangles from~$\Rec^\e$ and~$\Sec^\e$. 
\end{lemma}
\begin{proof}
  For any indices $s\leq t\in\R^d$, the commutativity of the square
  \[\xymatrix{
    \Mod(s) \ar[r]\ar[d] & \Mod(t) \ar[d] \\
    \Mod(s+\e) \ar[r] & \Mod(t+\e)
  }\]
  implies that $\Rk \Mod^\e (s, t) = \Rk \Mod(s, t+\e)$. Then, the usual rank decomposition of~$\Mod$ at indices~$(s, t+\e)$ gives:
  \begin{align*}
    \Rk \Mod^\e(s, t) = \Rk \Mod(s, t+\e) &= \sum_{\rec \in \Rec} \Rk \field_\rec(s, t+\e) - \sum_{\sec \in \Sec} \Rk \field_\sec(s, t+\e)\\
    &= \sum_{\rec \in \Rec} \Rk \field_\rec^\e(s, t) - \sum_{\sec \in \Sec} \Rk \field_\sec^\e(s, t)\\
    \mbox{\footnotesize (Example~\ref{ex:e-smoothing_rect})}~ &= \sum_{\rec \in \Rec} \Rk \field_{\rec^\e}(s, t) - \sum_{\sec \in \Sec} \Rk \field_{\sec^\e}(s, t).
  \end{align*}
When $(\Rec, \Sec)$ is minimal, the minimality of~$(\Rec^\e, \Sec^\e)$ after removing the empty rectangles comes from the fact that each $\rec^\e$ is obtained from $\rec$ by shifting its upper-right corner by -$\e$, so $\rec^\e = \sec^\e \neq \emptyset$ implies $\rec = \sec$.
\end{proof}

\begin{figure}[b]
  \centering
  \includegraphics[width=0.9\textwidth]{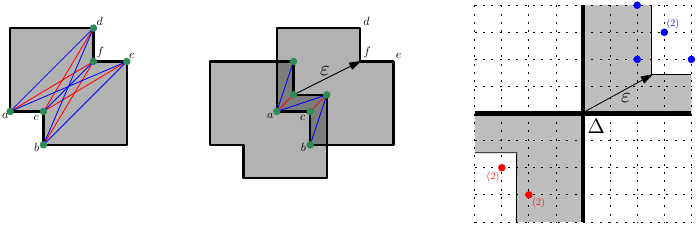}
  \caption{Behavior of the signed barcode and prominence diagram under $\e$-smoothing. Left: the input module~$\Mod$ from Figure~\ref{fig:decomp_2-2}, overlaid with its signed barcode. Center: the $\e$-smoothing~$\Mod^\e$ of~$\Mod$, shown in dark gray and overlaid with its own signed barcode---obtained by shifting the right endpoints in the signed barcode of~$\Mod$ by~-$\e$. Right: effect of the ~$\e$-smoothing on the signed prominence diagram.}
  \label{fig:smoothed_barcode_diagram}
\end{figure}

Thus, the effect of $\e$-smoothing~$\Mod$ on its signed barcode is to shift the right endpoints of the bars by~-$\e$, removing those bars for which the shifted right endpoint is no longer greater than or equal to the left endpoint. The effect on its signed prominence diagram is to shift the positive vectors by~-$\e$ and the negative vectors by~$\e$, removing those vectors that cross~$\Delta$. Alternatively, one can inflate~$\Delta$ by~$\e$, and remove the vectors that lie in the inflated~$\Delta$, as illustrated in Figure~\ref{fig:smoothed_barcode_diagram}.

So, the remoteness of the signed prominence of a bar from~$\Delta$, or equivalently, the width of the rectangle corresponding to this bar in the usual rank decomposition of~$\Mod$,  measures how resilient that bar or rectangle is under smoothings of~$\Mod$, and thus how important it is for the structure of the usual rank invariant.

\section{Experiments}
\label{sec:experiments}

In this section we consider the signed barcodes in three different two-parameter settings. In the first experiment (Section~\ref{sec:experiment1}), our point set has three distinct holes, and we will see how these holes can be inferred from the signed barcode. In the second experiment (Section~\ref{sec:experiment2}), we explore the stability properties of the signed barcodes by considering a family of point sets sampled from the unit circle with an increasing amount of noise. In the final experiment (Section~\ref{sec:experiment3}), we apply our methods in the context of two-parameter clustering. Before getting to the details of the experiments, we recall some helpful basic concepts from topological data analysis in Section~\ref{sec:general_constructions}.

\begin{remark}
  Since we are working with two-parameter persistence modules, we have chosen to rely entirely on RIVET~\cite{lesnick2015interactive} to compute the usual rank invariant, mainly for the sake of simplicity and speed of implementation. As pointed out in Remark~\ref{rem:complexity_decomp_usual-rank} and further discussed in Section~\ref{sec:conclusion}, the use of RIVET---or part thereof---is not mandatory but can be considered as an option. 
\end{remark}
  
\subsection{General constructions}
\label{sec:general_constructions}

Let $(P,d)$ denote a finite metric space. The \emph{neighborhood graph at scale $r$} is the graph $N(P)_r$ with vertex set $P$, and with an edge connecting $p$ and $q$ if $d(p,q) \leq r$. The \emph{Vietoris--Rips complex at scale $r$}, ${\rm VR}(P)_r$, is defined as the \emph{clique complex} on $N(P)_r$, i.e., the largest simplicial complex having $N(P)_r$ as its 1-skeleton.  The Vietoris--Rips complex is an important tool in single-parameter persistent homology and it can be further refined in the presence of a real valued function $f\colon P\to \R$: the \emph{Vietoris--Rips bifiltration of ${\rm VR}(P)_\infty$ (with respect to $f$)} is given by 
\[{\rm VR}(P,f)_{r,s} := {\rm VR}(f^{-1}(-\infty, s])_r.\] Applying simplicial homology with coefficients in a field~$\field$ yields a persistence module $M$ over~$\R^2$, \[ M(r,s) = H_p({\rm VR}(P,f)_{r,s} ).\]

When considering points in the plane it is convenient to visualize the Vietoris--Rips bifiltration by the bifiltration of the plane given by $f$ and the distance function, i.e., 
\[ U^f_{r,s} = \left\{z\in \R^2: \min_{p\in P, f(p) \leq s} ||p-z|| \leq r/2.\right\}\]
As is well-known, $H_p(U^f)$ offers only an approximation of $H_p({\rm VR}(P,f))$, and thus there might be slight homological discrepancies between the planar subsets as visualized, and the Vietoris--Rips bifiltration. 

In practice we will consider a discretization of $M$. This is done by first selecting a finite number of thresholds $X=\{r_0, \ldots, r_{g_1-1}\}$ and $Y=\{s_0 \ldots, s_{g_2-1}\}$ for $r$ and $s$, respectively, and then restricting $M$ to the grid $X\times Y$. In all the plots we will identify $X\times Y$ with the grid $\{0,1, \ldots, g_1 - 1\}\times \{0, 1, \ldots, g_2-1\}$. From Example~\ref{example restriction to grid} we know that, if $(\Rec,\Sec)$ is a rank decomposition of $M$, then $(\Rec|_{X\times Y}, \Sec|_{X\times Y})$ is a rank decomposition of $M|_{X\times Y}$.

\subsection{Experiment 1: planar points and the height function}
\label{sec:experiment1}
\begin{itemize}
\item The point set $P$ consists of 150 planar points as shown in Figure~\ref{fig:leftright-pc} (left).
\item The function $f$ is the height function, i.e., $f(p_x, p_y) = p_y$.
\item The homology degree is~1 and $\field=\Z_2$. 
\item The discretizations are given by restriction to the grids $G_1=\{r_0, \ldots, r_{49}\}\times \{s_0, \ldots, s_{49}\}$ and $G_2 = \{r_4, r_9, \ldots, r_{49}\}\times \{s_4, s_9, \ldots, s_{49}\}$. The values $r_i$ are chosen such that the difference $r_{i+1}-r_i$ is constant,  while the values $s_i$ are selected such that each interval $[s_i, s_{i+1})$ contains the same number of function values. 
\end{itemize}
The point set exhibits three significant holes of varying sizes appearing at different heights. Moving from bottom and up we denote these holes by A, B and C, respectively. The evolution of these holes in the bifiltration is shown in  Figure~\ref{fig:leftright-pc} (right), and the associated signed barcodes of $M|_{G_1}$ and $M|_{G_2}$ are shown in Figure~\ref{fig:leftright-barcode}. 

\begin{figure}[tb]
\centering
\begin{subfigure}{0.3\textwidth}
  \centering
  \includegraphics[width=1\linewidth]{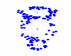}

\end{subfigure}%
\begin{subfigure}{.7\textwidth}
  \centering
  \includegraphics[width=1\linewidth]{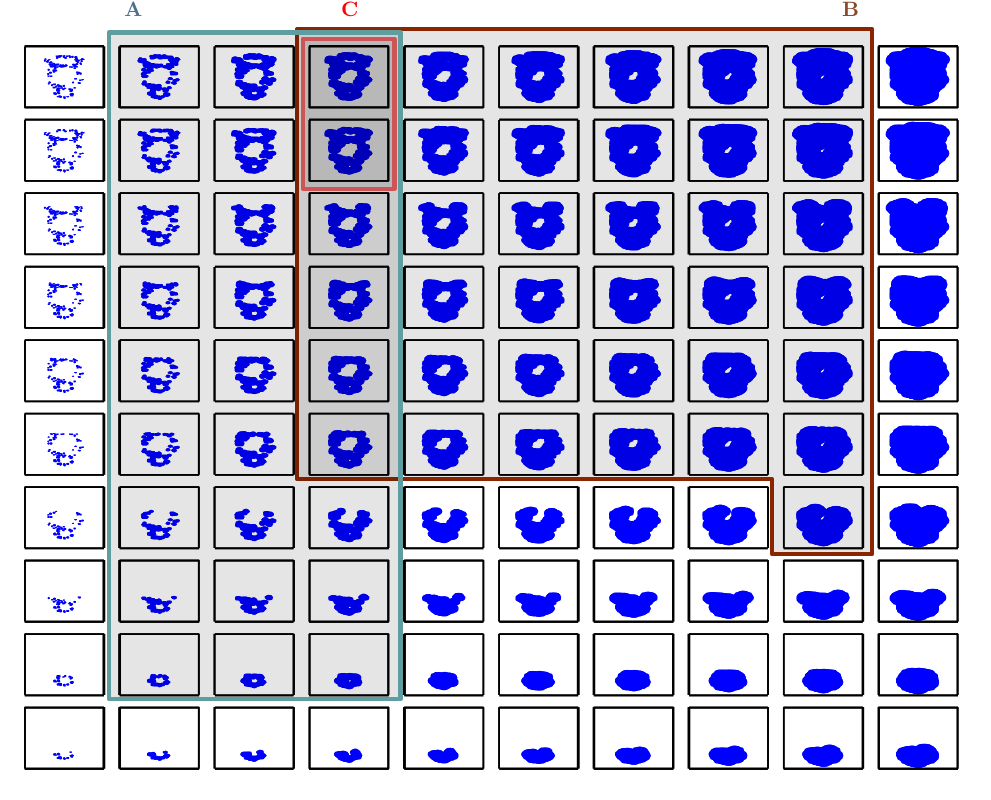}
\end{subfigure}
\caption{Left: The point set in Experiment 1. Right: The associated bifiltration of the distance and height functions. The lifespan of each 1-dimensional feature A, B, C in the bifiltration is highlighted in a specific color.}
\label{fig:leftright-pc}
\end{figure}

\begin{figure}
\centering
\begin{subfigure}{0.5\textwidth}
  \centering
  \includegraphics[width=1\linewidth]{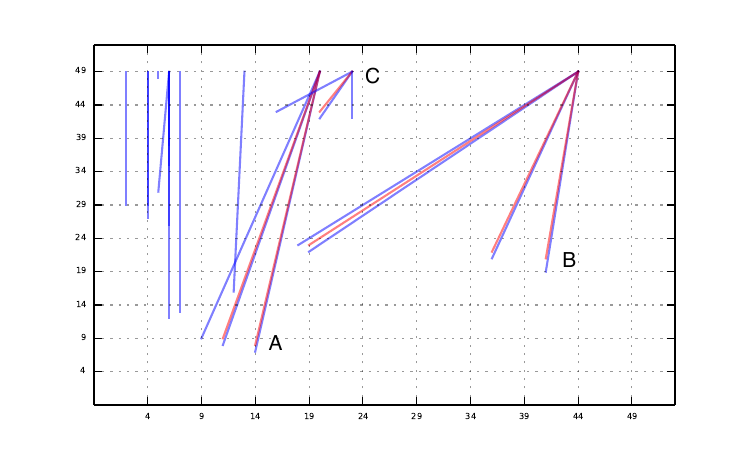}
  \caption{}
  \label{fig:leftright-barcode-1}
\end{subfigure}%
\begin{subfigure}{.5\textwidth}
  \centering
  \includegraphics[width=1\linewidth]{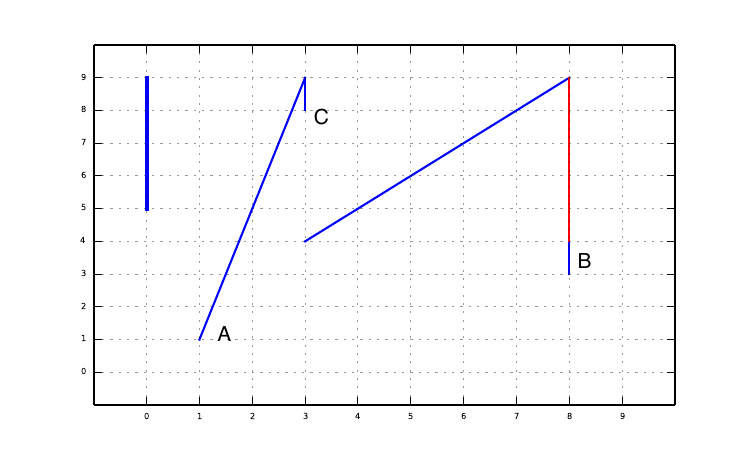}
  \caption{}
  \label{fig:leftright-barcode-2}
\end{subfigure}
\caption{The signed barcode of Experiment 1 for two different choices of grid-size: $50\times 50$ (A) and $10\times 10$ (B).}
\label{fig:leftright-barcode}
\end{figure}

\begin{figure}
\centering
\begin{subfigure}{0.5\textwidth}
  \centering
\includegraphics[width=\linewidth]{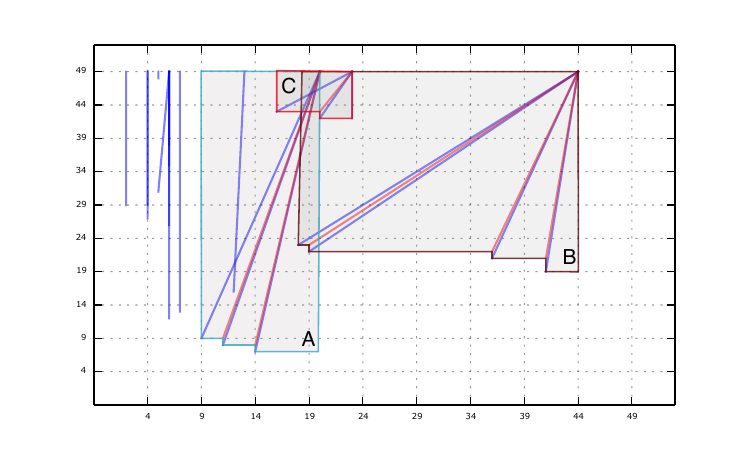}
\caption{}
\label{fig:leftright-intervals-1}
\end{subfigure}%
\begin{subfigure}{.5\textwidth}
  \centering
\includegraphics[width=\linewidth]{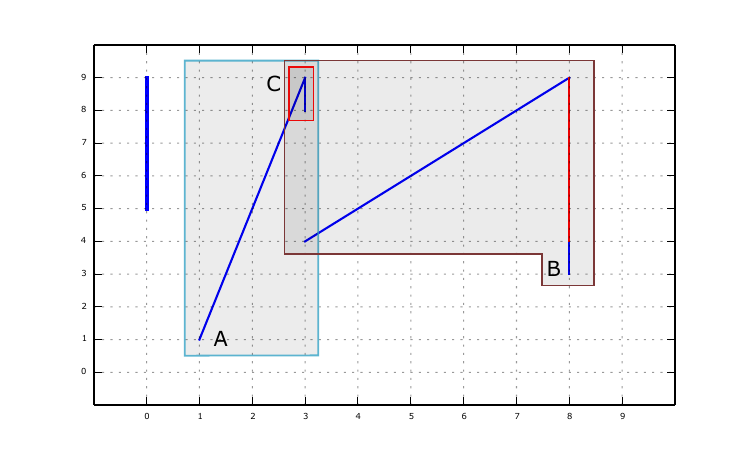}
\caption{}
\label{fig:leftright-intervals-2}
\end{subfigure}
\caption{The supports of the three features A, B and C in Experiment 1 for our two choices of grid-size: $50\times 50$ (left) and $10\times 10$ (right). The supports in the $10\times 10$ grid coincide with the supports shown in Figure~\ref{fig:leftright-pc}. They have been slightly inflated here for better readability.}
\label{fig:leftright-intervals}
\end{figure}

Let us first consider the coarser grid, whose signed barcode is shown in Figure~\ref{fig:leftright-barcode-2}.  To the left in the figure there are a several features (indicated by a line of increased thickness) persisting only in the vertical direction. These features correspond to tiny loops formed by ``noise'' in the sampling, as can be seen in the first column of Figure~\ref{fig:leftright-pc}~(right). The holes A and C are generated at a unique index and therefore each of them gives rise to a single bar in the signed barcode. On the other hand, hole B appears at two incomparable indices and therefore gives rise to two rectangles in $\Rec|_{G_2}$ born at incomparable grades, and a rectangle in $\Sec|_{G_2}$ accounting for double counting. The pattern of the bars suggests a generalized rank decomposition as shown in Figure~\ref{fig:leftright-intervals-2}. We see that these intervals correspond precisely to the supports shown in Figure~\ref{fig:leftright-pc}. The existence of a ``single'' feature could be confirmed by computing generalized rank invariants.

Shifting our focus to the finer grid, the first thing we observe is that the choice of a finer discretization increases the number of bars in the signed barcode. This is not surprising as the same feature will now appear at an additional number of incomparable points. Again, the collection of bars strongly suggests the features persisting over certain intervals as seen in Figure~\ref{fig:leftright-intervals-1}.

\subsection{Experiment 2: noisy circles and stability}
\label{sec:experiment2}
\begin{itemize}
\item Six point sets $P_0, \ldots, P_5$, where $P_0$ consists of 80  points that are evenly placed along the unit circle, and $P_i = \{ p + i\,\e_p : p\in P\}$  where $\e_p$ is a fixed random vector with coordinates  sampled independently and uniformly from $(-0.1, 0.1)$.  See Figure~\ref{fig:circles}.
\item The function $f$ is the height function, i.e., $f(p_x, p_y) = p_y$.
\item The homology degree is~1 and $\field=\Z_2$.
\item The discretization is obtained by restricting to \[G_1 = \{r_0, \ldots, r_{39}\}\times \{s_0, \ldots, s_{39}\},\] where the values are chosen such that the differences $r_{i+1}-r_i$ and $s_{j+1}-s_j$ are constant. For the purpose of visualization we shall use the coarser grid $G_2 = \{r_3, r_7, \ldots, r_{39}\}\times \{s_3, s_7, \ldots, s_{39}\}$ --- see Figure~\ref{fig:circle-bifil}.
\end{itemize}
The purpose of this experiment is evidently to observe the evolution of the signed barcodes as the points are moving linearly from $\{p\}$ to $\{p+5\,\e_p\}$. The signed barcodes are shown in (A) through (F) of Figure~\ref{fig:bc_noisy}. We observe the following:
\begin{itemize}
\item ($P_0$) As expected: at lower heights, the feature appears at larger distance thresholds than at higher heights. 
\item ($P_1$) With the introduction of noise in the data, the feature persists longer at lower heights, and this causes the addition of two near-horizontal generators around height index 30, as well as a vertical generator around scale index 33. To account for double-counting of the rank, two near-horizontal co-generators and one vertical co-generator appear as well. Note that while the second to last column of  Figure~\ref{fig:circle-bifil} suggests the presence of a non-trivial $H_1$, that is not the case when working with the Vietoris--Rips complex.
\item ($P_2-P_5$) As the perturbation increases the support of the signed barcode corresponding to the circular signal in the data gradually shrinks: it appears at a later scale and persists for a shorter amount of time. Furthermore, the noise has \revisedversion{introduced} several short-lived cycles in the data, giving rise to an increasing amount of near-vertical lines to the left in the plot. 
\end{itemize}

\begin{figure}
\centering
\begin{subfigure}{0.15\textwidth}
  \centering
  \includegraphics[width=\linewidth]{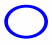}
  \caption{$P_0$}
  \end{subfigure}%
\begin{subfigure}{.15\textwidth}
    \includegraphics[width=\linewidth]{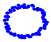}
      \caption{$P_1$}
\end{subfigure}%
\begin{subfigure}{.15\textwidth}
    \includegraphics[width=\linewidth]{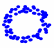}
          \caption{$P_2$}
\end{subfigure}%
\begin{subfigure}{.15\textwidth}
    \includegraphics[width=\linewidth]{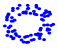}
          \caption{$P_3$}
\end{subfigure}%
\begin{subfigure}{.15\textwidth}
    \includegraphics[width=\linewidth]{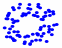}
          \caption{$P_4$}
\end{subfigure}%
\begin{subfigure}{.15\textwidth}
    \includegraphics[width=\linewidth]{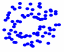}
          \caption{$P_5$}
\end{subfigure}%
\caption{The six point sets from Section~\ref{sec:experiment2}.}
\label{fig:circles}
\end{figure}
\begin{figure}
  \centering
  \includegraphics[width=0.7\linewidth]{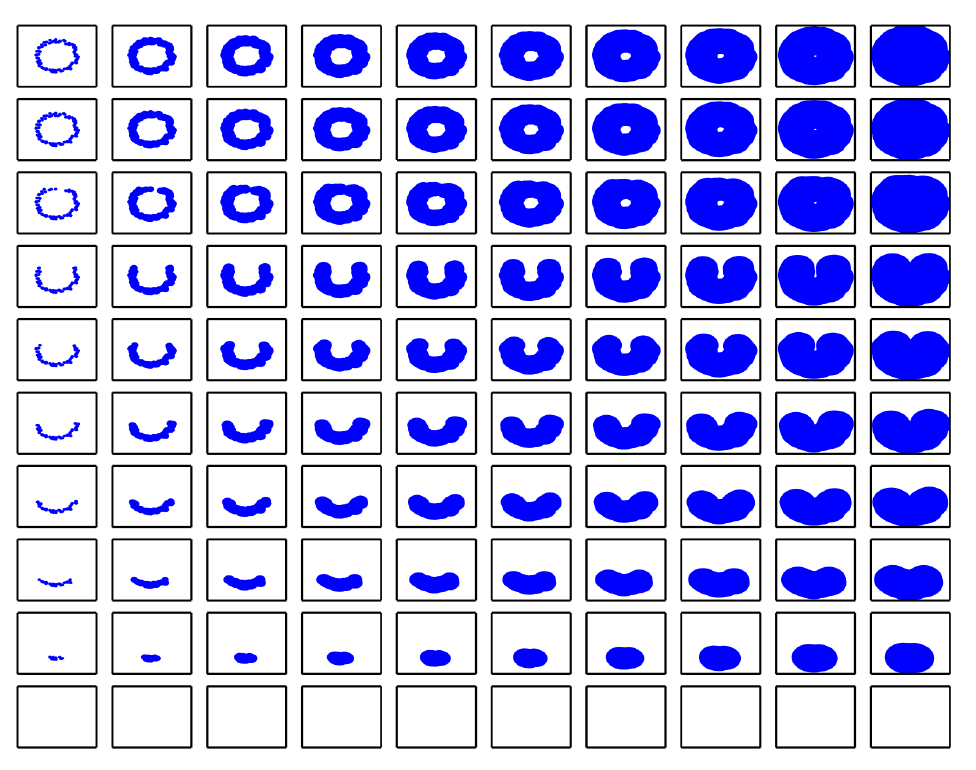}
        \caption{A bifiltration of $P_1$ over $G_2$.}
		\label{fig:circle-bifil}
\end{figure}

\begin{figure}
\centering
\begin{subfigure}{0.5\textwidth}
  \centering
  \includegraphics[width=\linewidth]{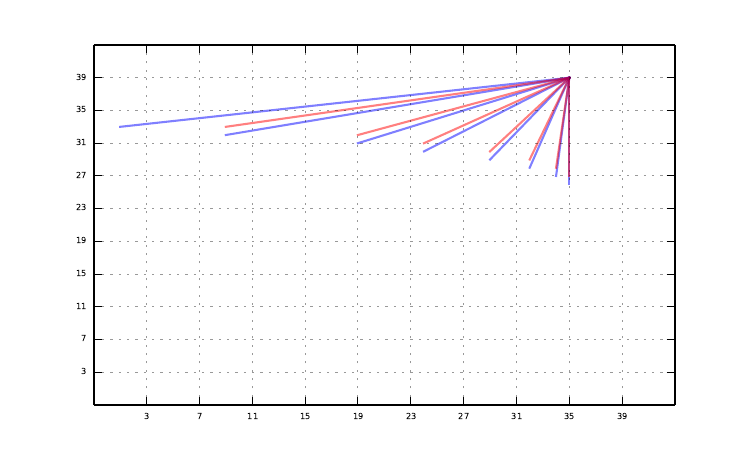}
  \caption{$P_0$}
  \end{subfigure}%
\begin{subfigure}{.5\textwidth}
    \includegraphics[width=\linewidth]{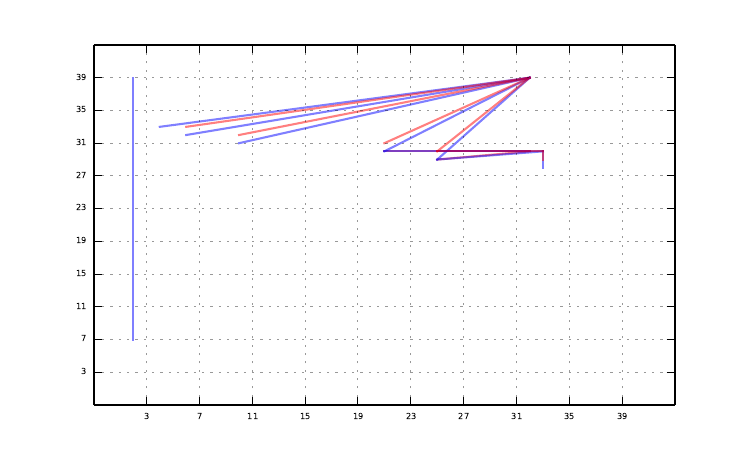}
      \caption{$P_1$}
\end{subfigure}%

\begin{subfigure}{.5\textwidth}
    \includegraphics[width=\linewidth]{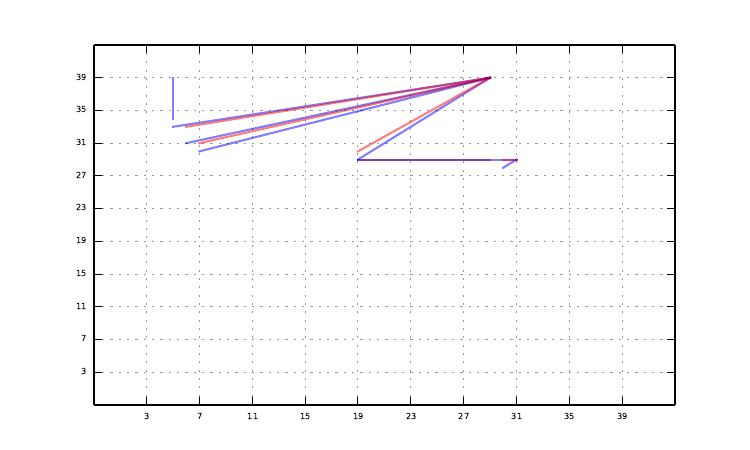}
          \caption{$P_2$}
\end{subfigure}%
\begin{subfigure}{.5\textwidth}
    \includegraphics[width=\linewidth]{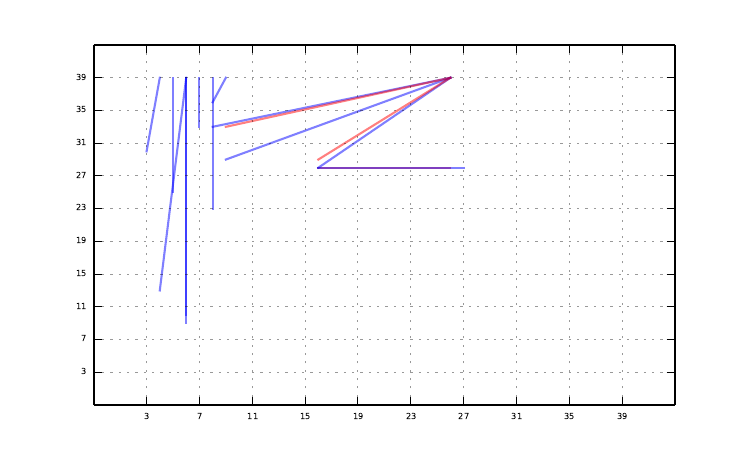}
          \caption{$P_3$}
\end{subfigure}%

\begin{subfigure}{.5\textwidth}
    \includegraphics[width=\linewidth]{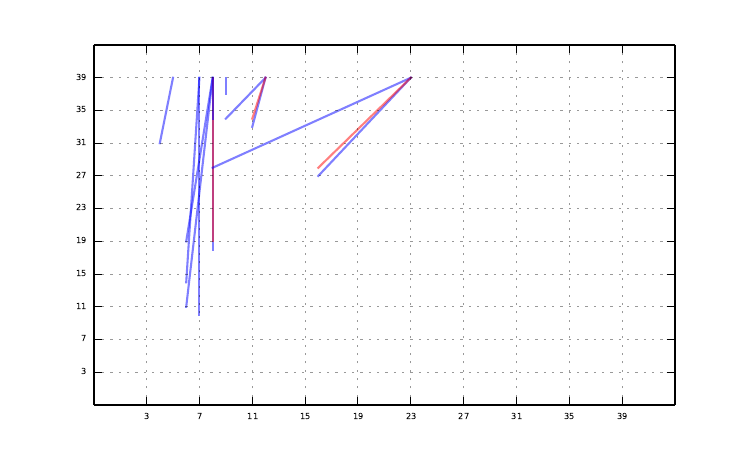}
          \caption{$P_4$}
\end{subfigure}%
\begin{subfigure}{.5\textwidth}
    \includegraphics[width=\linewidth]{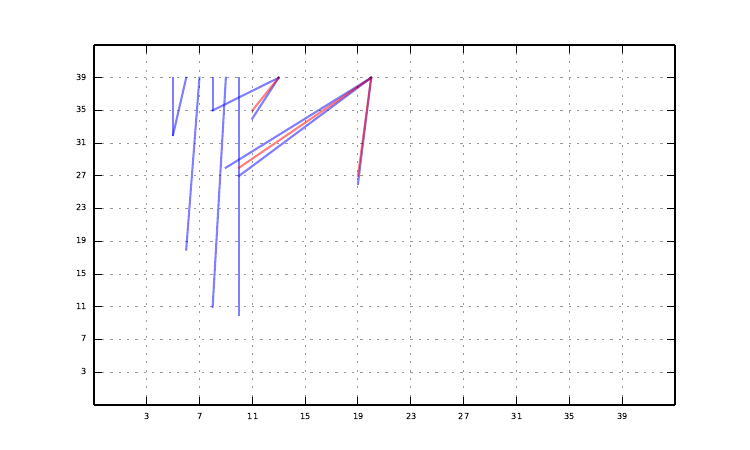}
          \caption{$P_5$}
\end{subfigure}%
\caption{The signed barcodes of Experiment 2 over the grid $G_1$ ($40\times 40)$. }
\label{fig:bc_noisy}
\end{figure}

\subsection{Experiment 3: two-parameter clustering}
\label{sec:experiment3}
\begin{itemize}
\item The point set $P$ consists of the $N=90$ planar points sampled from three Gaussian distributions as shown in Figure~\ref{fig:nonthin-bifil}.
\item The function $f$ is a local co-density estimate, i.e., 
\[f(p) = \#\{q \in P : d(p,q) > \epsilon\},\qquad \qquad \text{for a fixed $\epsilon\geq 0$}.\]
\item The homology degree is~0 and $\field=\Z_2$.
\item The discretization is obtained by restriction to $G = \{r_0, \ldots, r_{9}\}\times \{s_0, \ldots, s_{9}\}$ --- see Figure~\ref{fig:nonthin-bifil}.
\end{itemize}

The resulting persistence module is not interval-decomposable. Geometrically, this is due to the fact that the three clusters A, B, C merge in three different ways at incomparable grades, as shown in the highlighted squares of Figure~\ref{fig:nonthin-bifil}.  Hence, one obtains the following diagram of vector spaces and linear maps: 
\[ \begin{gathered}
  \xymatrix{
  \field^2 \\
  & \field^2 \\
  \field^3 \ar_(.7){\left[ \begin{smallmatrix} 1& 0 & 0 \\ 0 & 1 & 1\end{smallmatrix} \right]}[uu]
  \ar_-{\left[\begin{smallmatrix} 1 & 1 & 0 \\ 0 & 0 & 1\end{smallmatrix}\right]}[ur]
  \ar_-{\left[\begin{smallmatrix} 1 & 0 & 1 \\ 0 & 1 & 0 \end{smallmatrix}\right]}[rr]
  &&  \field^2
}
  \xymatrix{\\\cong\ \\}
\xymatrix{
  \field \\
  & \field \\
  \field^2 \ar^-{\left[ \begin{smallmatrix} 1& 1\end{smallmatrix} \right]}[uu]
  \ar_-{\left[\begin{smallmatrix} 1 & 0\end{smallmatrix}\right]}[ur]
  \ar_-{\left[\begin{smallmatrix} 0 & 1 \end{smallmatrix}\right]}[rr]
 &&  \field  }
  \xymatrix{\\\bigoplus\ \\}
\xymatrix{
  \field \\
  & \field \\
  \field \ar^-{1}[uu]
  \ar_-{1}[ur]
  \ar_-{1}[rr]
  &&  \field  }
\end{gathered}
\]

The obtained signed barcode and prominence diagram are shown in Figure~\ref{fig:bc_nonthin}. As expected, the lifespans of the three clusters A, B, C appear as three separate subsets of the bars, as shown in Figure~\ref{fig:bc_nonthin_lifespans}. Moreover, these three subsets of bars can be discriminated from the rest of the barcode by their prominence, as seen from the prominence diagram. Checking whether any one of these three subsets of bars does correspond to the lifespan of some feature can then be done by computing the coefficient assigned to  the corresponding interval in the minimal generalized rank decomposition of~$\Mod$.

\begin{figure}
  \centering
  \includegraphics[width=0.8\linewidth]{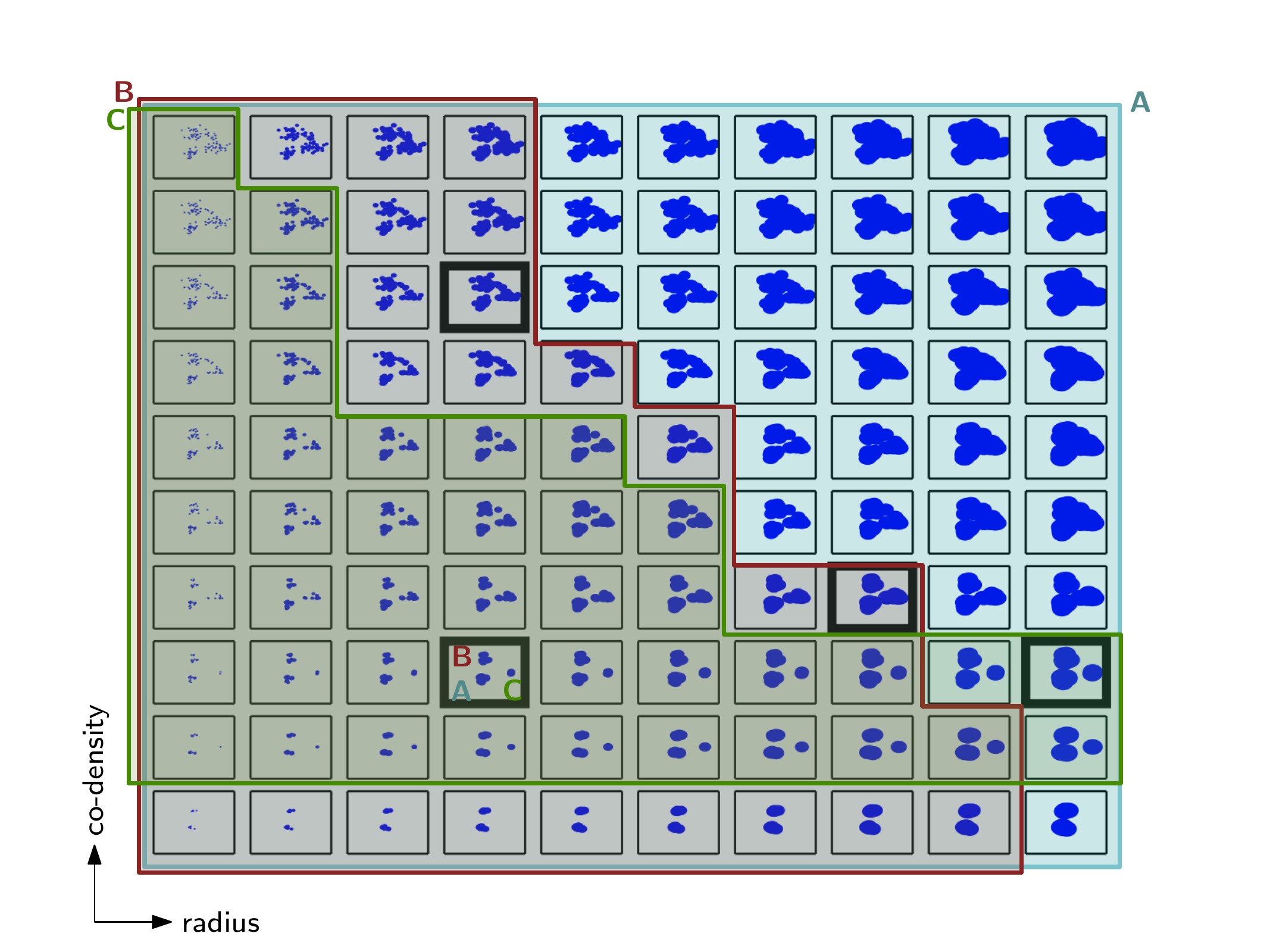}
        \caption{The bifiltration in Experiment 3. The highlighted squares show that three clusters (named A, B, C) merge in three different ways at incomparable scales. The lifespan of each one of these three clusters is marked by an interval with matching color.}
		\label{fig:nonthin-bifil}
\end{figure}

\begin{figure}
\includegraphics[width=0.6\textwidth]{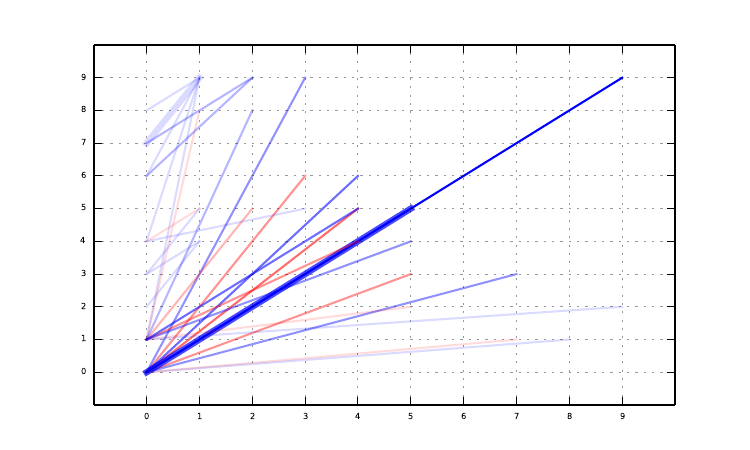}
\hspace{0.02\textwidth}
\includegraphics[width=0.35\textwidth]{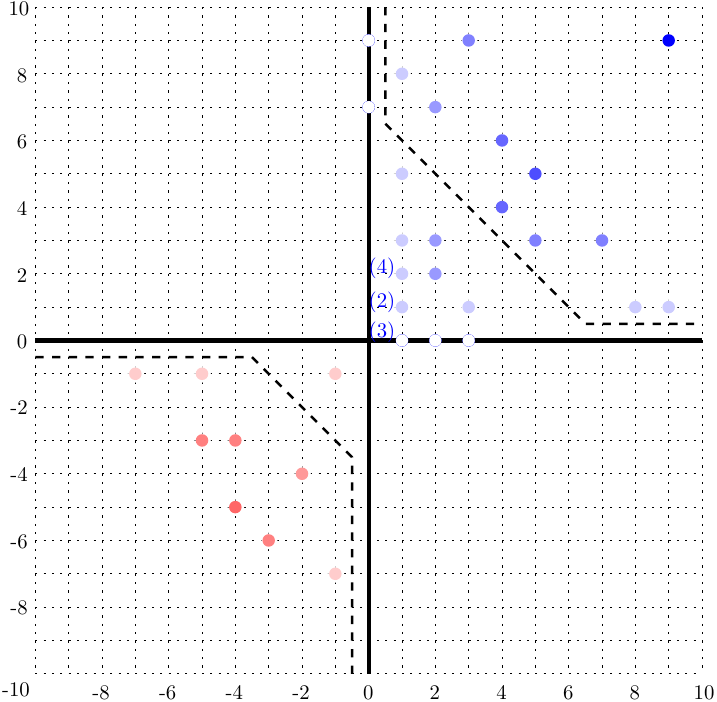}
  \caption{Left: signed barcode of Experiment~3 over the $10\times 10$ grid.  Thicker bars overlap with another bar. Right: corresponding prominence diagram, where the bars coming from the lifespans of~$A,B,C$ are separated from the rest of the bars by the dashed curves. Each bar with endpoints $s\leq t$ in the barcode (and diagram) has an intensity   proportional to $\min\{t_x-s_x,\,t_y-s_y\}$; in particular, horizontal and vertical bars are invisible.}
  \label{fig:bc_nonthin}
\end{figure}%

\begin{figure}
 \includegraphics[width=0.49\textwidth]{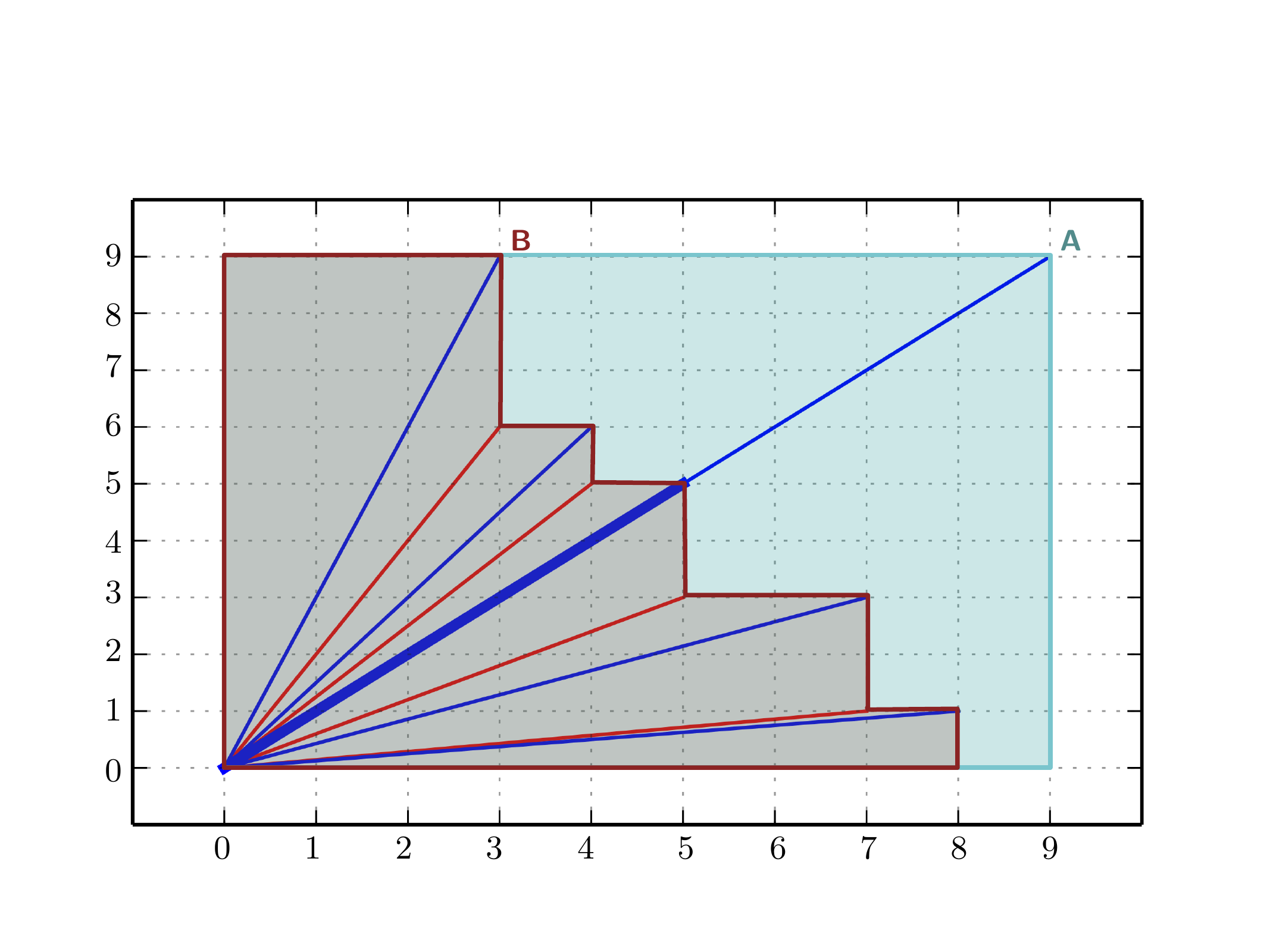}
 \includegraphics[width=0.49\linewidth]{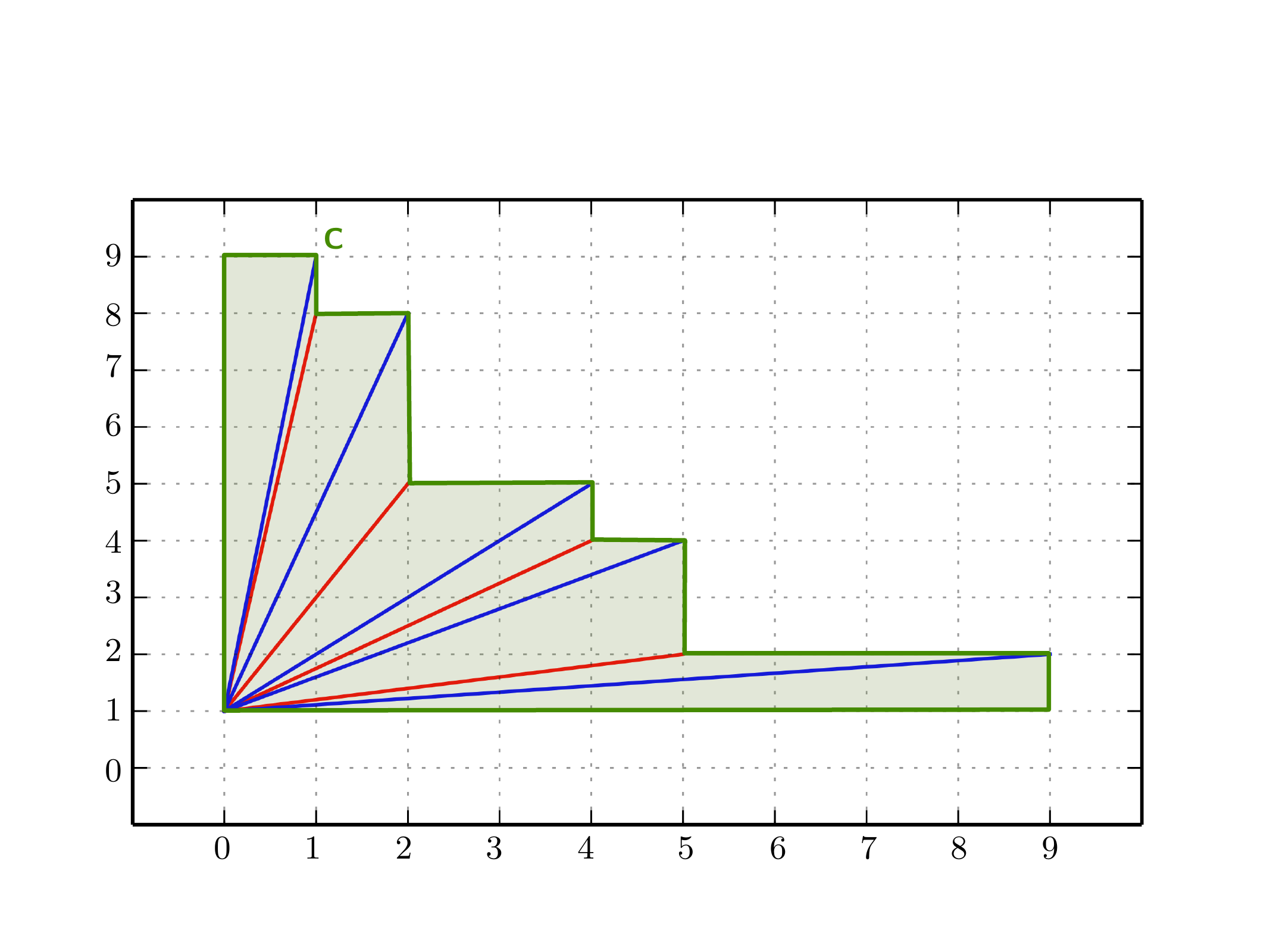}
\caption{Lifespans of $A,B$ (left) and $C$ (right) in the signed barcode.}
\label{fig:bc_nonthin_lifespans}
\end{figure}

\section{Discussion}
\label{sec:conclusion}

We conclude the paper by further discussing some aspects of our work and the prospects they raise. 

\subsection{Generalized rank-exact resolutions} 
\label{sec:genrank-res}

Considering short exact sequences on which rank is additive is more subtle when we consider generalized ranks. The following example shows that in general we cannot expect to obtain an exact structure \revisedversion{(Section~\ref{sec:exact_struct})} in this way.

\begin{example}
Consider the \( 2 \times 2 \)-grid, and the interval \( \int \) indicated below.
\[ \begin{tikzcd}[
 execute at end picture={
  \fill[opacity=.2] (a.north west) -- (a.south west) -- (b.south west) -- (c.south west) -- (c.south east) -- (b.north east) -- cycle;
 }]
|[alias=a]| \circ \ar[r] & |[alias=b]| \circ \\
\circ \ar[u] \ar[r] & |[alias=c]| \circ \ar[u]
\end{tikzcd} \]
For this grid all indecomposables are thin, and we will denote them by their dimension vectors. Consider the following commutative diagram with exact rows.
\[ \begin{tikzcd}
0 \ar[r] & \begin{smallmatrix} 0 \, 1 \\ 0 \, 0 \end{smallmatrix} \ar[r] \ar[d,equal] & \begin{smallmatrix} 1 \, 1 \\ 0 \, 0 \end{smallmatrix} \oplus \begin{smallmatrix} 0 \, 1 \\ 0 \, 1 \end{smallmatrix} \ar[r] \ar[d] & \begin{smallmatrix} 1 \, 1 \\ 0 \, 1 \end{smallmatrix} \ar[r] \ar[d] & 0  \\
0 \ar[r] & \begin{smallmatrix} 0 \, 1 \\ 0 \, 0 \end{smallmatrix} \ar[r] & \begin{smallmatrix} 1 \, 1 \\ 0 \, 0 \end{smallmatrix} \ar[r] & \begin{smallmatrix} 1 \, 0 \\ 0 \, 0 \end{smallmatrix} \ar[r] & 0  
\end{tikzcd} \]
Here the middle vertical arrow is the projection to the first summand. Note that \( \Rk_{\int} \) is zero on all objects in this diagram, except the right upper corner, where it is one. In particular \( \Rk_\int \) is additive on the lower sequence, but not on the upper one. But the upper sequence is a pull-back of the lower one. This shows that the collection of short exact sequences such that \( \Rk_{\int} \) is additive does not constitute an exact structure.
\end{example}

\subsection{About the collection~$\Int$ of intervals involved in the rank decompositions}

\revisedversion{Definition~\ref{def:rank_decomp} forces} the collection~$\Int$ of intervals \revisedversion{used for} the \revisedversion{decomposition of the} generalized rank invariant to be the same as the collection \revisedversion{over which it is evaluated}. Thus, $\Int$ plays a double role:
\begin{itemize}
\item that of a {\em dictionary} of shapes (formally, a basis of rank functions) over which the rank decompositions are built;
\item that of a {\em test set} of shapes over which the (generalized) rank invariant is evaluated, i.e., the existence of ``features'' is probed.
\end{itemize}
It is natural to try decorrelating the two roles by assigning a different collection of intervals to each one of them, say $\Intd$ for the dictionary and  $\Intt$ for the test set. The \revisedversion{question} becomes then to decompose (generalized) rank invariants $\Rk_{\Intt} \Mod$, or more generally, to decompose maps $r:\Intt\to\Z$, over the family of rank invariants $\left(\Rk_{\Intt} \field_\rec\right)_{\rec\in\Intd}$. \revisedversion{Problems may arise with the existence or uniqueness of the decompositions though, and generally speaking:
\begin{itemize}
  \item letting $\Intt\supsetneq\Intd$ may prevent rank decompositions from existing at all, and when they exist, it implies that the minimal one coincides with the one obtained with $\Intd=\Intt$, thus bringing no further insight;
  \item letting $\Intt\subsetneq\Intd$ may yield smaller-sized rank decompositions, such as for instance in Figures~\ref{fig:decomp_indec2_int_grid} and~\ref{fig:decomp_indec_int}, however it may also prevent the uniqueness of the minimal rank decomposition, thus hurting its interpretability as illustrated in Figure~\ref{fig:decomp_2-2_inv}.   
\end{itemize}
Yet, as we have seen on several occasions, it can sometimes be useful to take $\Intd\neq\Intt$. For instance, taking for $\Intd$ the half-open rectangles in $\R^d$ and for $\Intt$ the closed rectangles allowed us to decompose the usual rank invariants of finitely presented modules in Section~\ref{sec:rank-decomp_pers}; or, taking for $\Intd$ the hooks and for $\Intt$ the closed segments allowed us to work with rank-exact resolutions and thus lift the rank decompositions to the corresponding Grothendieck group in Section~\ref{sec:rank-exact}. Beyond these special cases, determining when similar winning combinations of $\Intd\neq \Intt$ occur remains wide open.
}

\begin{figure}[tb]
  \centering
  \includegraphics[width=\textwidth]{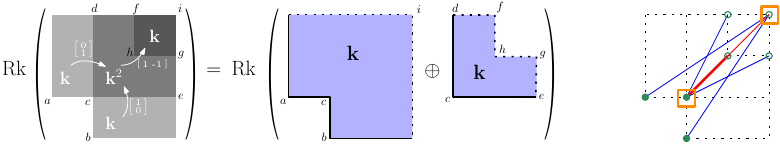}
  \caption{Left and center: one of several possible minimal rank decompositions, over the collection of all intervals of $\R^2$,  of the usual rank invariant of the finitely presented module~$\Mod$ from Figure~\ref{fig:decomp_indec}. Right: \revisedversion{the corresponding signed barcode, with orange squares indicating} how the bars are clustered into two groups coming from the two elements in the decomposition  at the center (note: the bottom-left orange square does not include the longer red bar).} 
  \label{fig:decomp_indec_int}
\end{figure}

\begin{figure}[tb]
  \centering
  \includegraphics[width=\textwidth]{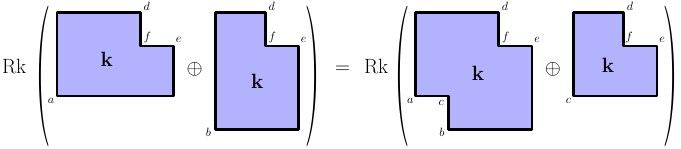}
  \caption{Two interval-decomposable modules with the same usual rank invariant. The module on the right-hand side is one among several possible minimal usual rank decompositions of the module on the left-hand side over the full collection of intervals of~$\R^2$. Notice how this decomposition creates the illusion that a feature spans the entire interval with perimeter $(a,c,b,e,f,d)$, whereas no such feature actually exists in the module on the left-hand side.} 
  \label{fig:decomp_2-2_inv}
\end{figure}

\subsection{Limitations of rank decompositions} As rank decompositions encode the structure of rank invariants, they are just as powerful descriptors as the rank invariants themselves. In particular, they are not  complete descriptors for multi-parameter persistence modules nor, more generally, for poset representations. For instance, we see from Figure~\ref{fig:decomp_indec2_fullint_grid} that the direct sum of the indecomposable module on the left-hand side of the figure with the red  module on the right-hand side has the same generalized ranks as the blue  module in that same figure. As a consequence, they are indistinguishable from each other based solely on their generalized rank invariants. An important question that comes up is how much of the structure of a persistence module may be missed by these descriptors.

If we restrict our focus to interval-decomposable representations over finite posets, then we know from Proposition~\ref{prop:complete} that the generalized rank invariant (with $\Int$ being the full collection of intervals in the poset) is a complete descriptor, therefore so is its minimal rank decomposition. Of course, the usual rank invariant itself is not a complete descriptor, as illustrated in Figure~\ref{fig:decomp_2-2_inv}. \revisedversion{It is, however, complete on the subcategory of segment-decomposable representations, by Proposition~\ref{prop:complete}.}
An important question in practice is how to choose the collection $\Int$ of intervals, between the collection of closed segments (which corresponds to taking the usual rank invariant) and the full collection of intervals in the poset, to find the right trade-off between the richness of the invariant and the complexity of its calculation.

\subsection{Signed barcodes versus RIVET}

As explained in Section~\ref{sec:signed_barcodes}, signed barcodes play a role similar to that of classical barcodes in one-parameter persistence: they carry information about the global structure of the rank invariant, and they allow for certain visually-enabled procedures to retrieve the value of the rank invariant at indices $s\leq t$, or the barcodes of the restriction of the module  to a given upward sloping line. Thus, they appear as a viable alternative to
RIVET~\cite{lesnick2015interactive} for the exploration of
multi-parameter persistence modules.
They can also be used in conjunction with RIVET, or parts thereof,
for instance: using RIVET to compute minimal presentations, then using the
dynamic programming procedure from~\cite{botnan_et_al:LIPIcs:2020:12180} to compute the usual rank
invariant, and finally using~\eqref{eq:incl_excl_rect} to compute the corresponding minimal
rank decomposition and signed barcode.

Unlike RIVET, signed barcodes are
readily available for persistence modules with an arbitrary number of
parameters. They are easy to compute once the usual rank invariant has been computed, and in the two-parameter \revisedversion{setting} the total \revisedversion{worst-case} running time compares favorably to that of RIVET.
They are also
easy to encode, and their storage size never exceeds that of the usual
rank invariant (since there are no more distinct rectangles
$\seg{s,t}$ than there are pairs of comparable indices $s\leq t$ in
the input filtration). On the downside, for now there still remains work to be done---out
of the scope of this paper---to design a corresponding data structure
that allows for efficient (logarithmic-time) queries as in RIVET.

\subsection{Statistics and machine learning with signed barcodes}

While we have been mainly discussing the interpretability of our rank decompositions via their associated signed barcodes, motivated by the exploration of multi-parameter persistence modules, we should recall that, in the one-parameter setting, barcodes are also used to extract features from data, with applications in machine learning and artificial intelligence. To this end, a number of vectorizations and kernels for (unsigned) barcodes have been proposed---see e.g.~\cite{adams2017persistence,bubenik2015statistical,carriere2017sliced,kusano2016persistence,10.5555/3327546.3327666,reininghaus2015stable}, the vast majority of which enjoy stability properties in terms of the bottleneck distance between barcodes. Here, considering the nature of the signed barcodes, and the (pseudo-)distance put on their corresponding rank decompositions in Section~\ref{sec:stability}, it is possible that a subset of these techniques adapts naturally, with similar stability guarantees following from Theorem~\ref{th:matching-distance_stability}.
%
It will then be interesting to compare the outcome, both in terms of performances in machine learning applications and in terms of computation time, to \revisedversion{existing} vectorizations or kernels for multi-parameter persistence modules~\cite{NEURIPS2020_fdff71fc,corbet2019kernel,vipond2020multiparameter}.  The key point is that these previous techniques essentially slice the input module by  arbitrarily chosen collections of lines, and aggregate the resulting fibered barcodes---either by concatenation, by integration, or through vineyards; the added value of the signed barcode is to provide insight into which lines are most relevant for the slicing, potentially allowing for faster and more accurate computations, producing both richer and smaller-sized features. 

The metric space of rank decompositions may also be in itself a relevant object of study. An obvious question is whether the matching distance between rank decompositions can be translated into an optimal transportation distance between signed barcodes, possibly up to some constants. If the answer is positive, then, similarly to the space of unsigned barcodes in one-parameter persistence, the space of signed barcodes in multi-parameter persistence will be comparable to a space of (signed) measures, and the techniques developed in the one-parameter setting for doing statistics with unsigned barcodes may be adapted to work with signed barcodes as well. Meanwhile, the computation of the metric between rank decompositions will be greatly facilitated by its translation into a combinatorial matching problem---currently the signed barcodes provide insight into the structure of each individual rank decomposition and its corresponding rank invariant, however it is still unknown how to exploit this knowledge to find the most relevant directions for the matching distance between rank decompositions.

\bibliographystyle{abbrv}
\bibliography{./biblio}

\appendix
\section{Möbius inversions}
\label{sec:localfinite-mobius}
\revisedversion{Möbius inversions have played an integral role in shaping generalized persistence ~\cite{asashiba2019approximation,kim2018generalized,McCleary_2020,mccleary2020positivity,patel2018generalized}. 
In this section we employ Möbius inversions to prove existence of rank decompositions in the setting of Section~\ref{sec:rk-decomp_downward-finite}. We remark that our proof of Proposition~\ref{prop:alpha_mobius} is an adaptation of the proofs of~\cite{kim2018generalized,asashiba2019approximation} to a more general class of intervals. From this result one easily obtains an algorithm for computing a minimal rank decomposition for finite grids.}

Let \( (\Int, \subseteq) \) be a locally finite poset (Definition~\ref{def:locallyfinite}). Note that we are using subset notation here for the order relation because this is the example we will be interested in hereafter. However, for the results of this subsection \( \Int \) is an abstract poset. The incidence algebra is given as all functions \( \RelationAsSet[\subseteq]{\Int} \to \mathbb{Z} \), with products given by convolution:
\[
(f \convol g)(\jnt, \int) = \sum_{\jnt\supseteq\knt\supseteq\int} f(\jnt, \knt)\, g(\knt,\int).
\]
Note that the multiplicative unit is  \( \one_{\jnt=\int} \), that is, the function sending identical pairs to \( 1 \) and all other pairs to \( 0 \).

A natural element to consider is the function \( \zeta \), sending all comparable pairs to \( 1 \), and all other pairs to 0. It is shown in \cite[Proposition~3.1]{rota1964foundations} that \( \zeta \) is invertible, and that its inverse, \( \mu \), is given by (any of) the following explicit recursions:
\[
\mu(\jnt, \int) \quad =  \quad \begin{cases} 1 & \text{if } \jnt = \int \\ - \sum_{\jnt\supseteq\knt\supsetneq\int} \mu(\jnt, \knt) & \text{otherwise.} \end{cases} \quad = \quad \begin{cases} 1 & \text{if } \jnt = \int \\ - \sum_{\jnt\supsetneq\knt\supseteq\int} \mu(\knt, \int) & \text{otherwise.} \end{cases} 
\]
Note that $\mu$ is well-defined because \( \Int \) is assumed to be locally finite.

\begin{proposition} \label{prop explicit mobius inverse}
Suppose that, for any \( \int \in \Int \), there is a finite set \( \int^+ \subseteq \Int \) with the property that
\[ \{ \jnt \in \Int \mid \jnt \supsetneq \int \} = \{ \jnt \in \Int \mid \exists \knt \in \int^+ \colon \jnt \supseteq \knt \}, \]
and that any subset of \( \int^+ \) has a join in \( \Int \).
Then:
\[ \mu( \jnt, \int ) = \begin{cases} 1 & \text{ if } \jnt = \int \\ \sum_{\substack{x \subseteq \int^+ \\ \vee x = \jnt }} (-1)^{\card x} & \text{ if } \jnt \supsetneq \int \end{cases}. \]
\end{proposition}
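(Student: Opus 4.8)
The plan is to prove the identity by showing that the function $\nu$ on comparable pairs of $\Int$ defined by the right-hand side is a right inverse of $\zeta$ in the incidence algebra; since $\zeta$ is invertible with two-sided inverse $\mu$, this forces $\nu=\mu$. Concretely, for each $\int\in\Int$ fix the associated finite set $\int^+\subseteq\Int$ from the hypothesis and, for $\jnt\supseteq\int$, set $\nu(\jnt,\int)=1$ if $\jnt=\int$ and $\nu(\jnt,\int)=\sum_{x\subseteq\int^+,\ \vee x=\jnt}(-1)^{\card x}$ otherwise; this is well defined because $\int^+$ is finite and, by hypothesis, every subset of $\int^+$ has a join in $\Int$. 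It then suffices to verify that $\zeta\convol\nu=\one_{\jnt=\int}$, i.e.\ that $\sum_{\jnt\supseteq\knt\supseteq\int}\nu(\knt,\int)$ equals $1$ for $\jnt=\int$ and $0$ for $\jnt\supsetneq\int$; granting this, associativity together with $\mu\convol\zeta=\one_{\jnt=\int}$ yields $\mu=\mu\convol(\zeta\convol\nu)=(\mu\convol\zeta)\convol\nu=\nu$.

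The core of the argument is evaluating $\sum_{\jnt\supseteq\knt\supseteq\int}\nu(\knt,\int)$ for $\jnt\supsetneq\int$ (the case $\jnt=\int$ being immediate, with the single term $\knt=\int$). First I would record that every element of $\int^+$ strictly contains $\int$ --- this is forced by the defining property of $\int^+$ --- so that $\vee x\supsetneq\int$ for every nonempty $x\subseteq\int^+$. Peeling off the term $\knt=\int$, which contributes $1$, and substituting the defining formula into the remaining terms $\knt\supsetneq\int$, an interchange of the two finite summations rewrites $\sum_{\jnt\supseteq\knt\supsetneq\int}\ \sum_{x\subseteq\int^+,\ \vee x=\knt}(-1)^{\card x}$ as a single sum over the nonempty subsets $x\subseteq\int^+$ satisfying $\vee x\subseteq\jnt$. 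Since $\vee x\subseteq\jnt$ holds exactly when $\jnt$ is an upper bound of $x$, this condition is equivalent to $x\subseteq\int^+_\jnt:=\{\knt\in\int^+\mid\knt\subseteq\jnt\}$, so the whole expression becomes
\[ 1+\sum_{\varnothing\neq x\subseteq\int^+_\jnt}(-1)^{\card x}\;=\;1+\bigl((1-1)^{\card{\int^+_\jnt}}-1\bigr)\;=\;\one_{\int^+_\jnt=\varnothing}. \]

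Finally I would identify when $\int^+_\jnt=\varnothing$: by the defining property of $\int^+$ there is some $\knt\in\int^+$ with $\knt\subseteq\jnt$ precisely when $\jnt\supsetneq\int$, hence under the standing hypothesis $\jnt\supseteq\int$ we have $\int^+_\jnt=\varnothing$ if and only if $\jnt=\int$. Therefore $\sum_{\jnt\supseteq\knt\supseteq\int}\nu(\knt,\int)=\one_{\jnt=\int}$, which is precisely the identity $\zeta\convol\nu=\one_{\jnt=\int}$, so $\nu=\mu$. I expect the only delicate point to be the bookkeeping around the empty subset of $\int^+$ --- equivalently, around the join of the empty set --- which is why it is convenient to split off the diagonal term $\knt=\int$ so that only nonempty $x$ occur; note also that the hypothesis that \emph{every} subset of $\int^+$, and not merely $\int^+$ itself, admits a join in $\Int$ is precisely what makes the reindexed sum well posed. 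Apart from this, the argument is a routine inclusion-exclusion computation, and I do not anticipate any substantial obstacle.
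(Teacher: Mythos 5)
Your proposal is correct and follows essentially the same route as the paper: verify that the displayed formula defines a right inverse of $\zeta$ (hence equals $\mu$) by splitting off the diagonal term, interchanging the two finite sums, reindexing over the nonempty subsets of $\{\knt\in\int^+\mid \knt\subseteq\jnt\}$, and concluding by the standard alternating-sum identity together with the observation that this set is nonempty exactly when $\jnt\supsetneq\int$. The only difference is that you spell out the associativity argument $\nu=\mu\convol(\zeta\convol\nu)=(\mu\convol\zeta)\convol\nu$, which the paper leaves implicit.
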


This follows from Propositions~1 and 2 in Section~5 of \cite{rota1964foundations}. However, since the notation therein is a bit heavy, we give a direct argument here.

\begin{proof}
We only need to verify that \( \zeta \convol \mu = \one_{\jnt=\int} \) for \( \mu \) being given by the formula of the proposition. Indeed, for \( \jnt \supsetneq \int \) we have\revisedversion{
\begin{align*}
(\zeta \convol \mu)(\jnt, \int) = & \sum_{\jnt \supseteq \knt \supseteq \int} \mu(\jnt, \int) = 1 + \sum_{\jnt \supseteq \knt \supsetneq \int} \sum_{\substack{x \subseteq \int^+ \\ \vee x = \knt }} (-1)^{\card x} \\
= & 1 + \sum_{\varnothing \neq x \subseteq \{ \lnt \in \int^+ \mid \lnt \subseteq \jnt \}} (-1)^{\card x} = \sum_{x \subseteq \{ \lnt \in \int^+ \mid \lnt \subseteq \jnt \}} (-1)^{\card x} = 0 = \one_{\jnt=\int}(J,I)  \qedhere
\end{align*}}
\end{proof}

\begin{remark} \label{rem Mobius one sided}
  Note that Proposition~\ref{prop explicit mobius inverse} does not need our assumption that \( \Int \) is locally finite in any essential way. Without that assumption, we can still define $\mu$ by the formula in the proposition, and the proof still shows that \( \mu \) is a right inverse for \( \zeta \) (with respect to the partially defined convolution product).
\end{remark}

Consider now the collection of all functions \( r \colon \Int \to \mathbb{Z} \) with upward finite support, that is:
\[ \forall \int \in \Int \colon \quad \card \{ \jnt \in \Int \mid \jnt \supseteq \int \text{ and } r(\jnt) \neq 0 \} < \infty. \]
This collection naturally is a right module over the incidence algebra, by the multiplication formula
\[ (r \convol f)(\int) = \sum_{\jnt\supseteq\int} r(\jnt) f(\jnt,\int), \]
and we have the M\"obius inversion formula---see \cite[Proposition~3.2]{rota1964foundations}:
\begin{equation}\label{eq:Mobius formula}
  \forall r,s \colon \Int\to\Z \ \mbox{with upward finite support},\quad
 r = s \convol \zeta \quad \Longleftrightarrow \quad r \convol \mu = s.
\end{equation}

\subsection*{Application to rank decompositions}

In the following, we write $\mult_{\int} \Rec$ for the multiplicity of element~$\int$ in a given multiset $\Rec$.

 \begin{proposition}\label{prop:alpha_mobius}
 Let $\Int$ be a locally finite collection of intervals of~$\pos$, and let $r\colon \Int\to\Z$ have upward finite support.
 A pair $(\Rec, \Sec)$ of pointwise finite multisets of elements of \( \Int \) is a rank decomposition of \( r \) if and only if
 \[ (r \convol \mu)(\int) = \mult_{\int} \Rec - \mult_{\int} \Sec \qquad \forall \int \in \Int. \]
 In particular, the minimal rank decomposition of \( r \) is given as follows, where $\int^n$ means that the multi-set contains $n$ copies of the element~$\int$:
 \[ \Rec = \bigsqcup_{\substack{\int \in \Int \\(r \convol \mu)(\int) > 0}} \int^{(r \convol \mu)(\int)} \text{ and } \Sec = \bigsqcup_{\substack{\int \in \Int  \\(r \convol \mu)(\int) < 0}} \int^{(r \convol \mu)(\int)}. \]
 \end{proposition}

\begin{proof}
By Proposition~\ref{prop:rank_count}, we know that
\[  \Rk_{\int} \field_{\Rec} = \card \{ \rec \in \Rec \mid \rec \supseteq \int \} = \sum_{\rec \supseteq \int} \mult_{\rec} \Rec, \]
that is, \( \Rk_\Int \field_{\Rec} =  (\mult \Rec) \convol \zeta \). The analogous equation holds for \( \Sec \). Thus, by~\eqref{eq:Mobius formula} we have the equivalences:
\begin{align*}
( \Rec, \Sec) \text{ is a rank decomposition of } r \quad \Longleftrightarrow & \quad r = \Rk_\Int \field_{\Rec} - \Rk_\Int \field_{\Sec} \\
\Longleftrightarrow & \quad r = ( \mult \Rec -  \mult \Sec ) \convol \zeta \\
\Longleftrightarrow & \quad r \convol \mu =  \mult \Rec -  \mult \Sec. \qedhere
\end{align*}
\end{proof}

\end{document}